\theoremstyle{plain} 
\newtheorem{theorem}{Theorem}
\newtheorem{lemma}{Lemma}
\newtheorem{corollary}{Corollary}
\newtheorem{conjecture}{Conjecture}
\newtheorem{proposition}{Proposition}
\newtheorem{problem}{Problem}
\newtheorem*{conjecture*}{Conjecture}
\newtheorem*{theorem*}{Theorem}
\newtheorem*{assumption*}{Assumption}
\theoremstyle{plain}
\theoremstyle{remark}
\newtheorem{remark}{Remark}
\theoremstyle{definition}
\newtheorem*{notations*}{Notations}
\newtheorem*{acknowledgment*}{Acknowledgments}
\numberwithin{equation}{section}
\newcommand\swapcommand[2]{%
\let\swaptemp#1
\let#1#2
\let#2\swaptemp
}
\let\sl\l
\renewcommand\l{%
	\leavevmode
  \ifmmode
    \left
  \else
    \sl
  \fi}
\newcommand\set[2]{%
\left\{ #1 \; \middle| \; #2 \right\}
}
\swapcommand{\SS}{\S}
\renewcommand{\S}{\mathscr{S}}
\newcommand{\CC}{\mathbb{C}}
\newcommand{\RR}{\mathbb{R}}
\newcommand{\ZZ}{\mathbb{Z}}
\newcommand{\e}{\varepsilon}
\newcommand{\s}{\sigma}
\newcommand{\normal}{\normalfont}
\newcommand{\Lam}{\Lambda}
\newcommand{\us}{\underset}
\newcommand{\meas}{\operatorname{meas}}
\newcommand{\Li}{\operatorname{Li}}
\newcommand{\sgn}{\operatorname{sgn}}
\newcommand{\qqquad}{\qquad \qquad \qquad}
\newcommand{\qqqquad}{\qquad \qquad \qquad \qquad}
\newcommand{\T}{\mathscr{T}}
\renewcommand{\a}{\alpha}
\renewcommand{\b}{\beta}
\renewcommand{\r}{\right}
\renewcommand{\d}{\displaystyle}
\renewcommand{\Re}{\operatorname{Re}}
\renewcommand{\Im}{\operatorname{Im}}
\renewcommand{\epsilon}{\varepsilon}
\newcommand{\todaye}{\the\year/\the\month/\the\day}
\title[]
{On the logarithm of the Riemann zeta-function and its iterated integrals}
\author[S. INOUE]{Sh\={o}ta Inoue}
\address{Graduate School of Mathematics, Nagoya University,
Furocho, Chikusaku, Nagoya 464-8602, Japan}
\email{m16006w@math.nagoya-u.ac.jp}
\keywords{The Riemann zeta-function, The value distribution of the Riemann zeta-function, 
Zeros of the Riemann zeta-function,
The randomness of prime numbers}
\subjclass[2010]{Primary 11M06; Secondary 11M26}
\begin{document}

\maketitle

\begin{abstract}
The present paper gives some results for the logarithm of the Riemann zeta-function and its iterated integrals.
We obtain a certain explicit approximation formula for these functions. 
The formula has some applications, which are related with the value distribution of these functions 
and a relation between prime numbers and the distribution of zeros in short intervals.
\end{abstract}




\section{\textbf{Introduction and statement of the main theorem}}


In the present paper, we discuss some properties of the logarithm of the Riemann zeta-function $\zeta(s)$ and its iterated integrals.
We define the function $\eta_{m}(s)$ by
\begin{gather*}
\eta_{m}(\s+it)
= \int_{0}^{t}\eta_{m - 1}(\s+it')dt' + c_{m}(\s),
\end{gather*}
where
\begin{gather*}
\eta_{0}(\s+it) = \log{\zeta(\s+it)}, \\
c_{m}(\s)
= \frac{i^{m}}{(m-1)!}\int_{\s}^{\infty}(\a - \s)^{m-1}\log{\zeta(\a)}d\a.
\end{gather*}
Here, we decide the branch\footnote{Some people adapt a slightly different definition in this case.} 
of the logarithm of the Riemann zeta-function as follows.
When $t$ is not equal to zero and the ordinate of nontrivial zeros of $\zeta(s)$, 
then we choose the branch by the continuation with the initial condition
$\lim_{\s \rightarrow +\infty}\log{\zeta(\s + it)} = 0$.
If $t = 0$, then $\log{\zeta(\s)} = \lim_{\e \downarrow 0}\log{\zeta(\s + i\e)}$. 
If $t$ is the ordinate of a nontrivial zero $\rho = \b + i\gamma$ of the Riemann zeta-function, then 
$\log{\zeta(\s + i\gamma)} = \lim_{\e \downarrow 0}\log{\zeta(\s + i(\gamma - \sgn(\gamma)\e))}$.
We also mention that the integral of the definition of $\eta_{m}(\s + it)$ is defined by the improper Riemann integral, 
that is, it is defined by the following.
If there are zeros $\rho_{j} = \b_{j} + i\gamma_{j}$ $(j = 1, \ldots, k)$ of $\zeta(s)$ 
satisfying $\s \leq \b_{j}$, $0 < \gamma_{j} \leq t$, 
then the integral of the definition of $\eta_{m}(\s + it)$ means that 
\begin{align*}
\eta_{m}(\s + it)
= \lim_{\e \downarrow 0}\sum_{j=0}^{k}\int_{\gamma_{j} + \e}^{\gamma_{j+1} - \e}\eta_{m-1}(\s+it')dt',
\end{align*} 
where $\gamma_{0} = 0, \gamma_{k+1} = t$.

Under the above definition, the well known function $S_{m}(t)$ can be represented by using $\eta_{m}(s)$. 
Actually, the function $S_{m}(t)$ is defined by
\begin{align*}
S_{m}(t) = \pi^{-1}\Im(\eta_{m}(1/2 + it)), 
\end{align*}
and particularly, we may write $S_{0}(t)$ as $S(t)$.
The study for $S(t)$ is important since this function has information on the distribution of zeros of $\zeta(s)$.
This fact can be understood by the Riemann-von Mangoldt formula:
\begin{align}	\label{RVMF}
N(T) = \pi^{-1}\arg\Gamma(1/4 + iT/2) - T\log{\pi}/2\pi + S(T) + 1.
\end{align}
Here, the function $N(T)$ is the number of zeros $\rho = \b + i\gamma$ of $\zeta(s)$ with 
$0 < \b < 1$, $0 < \gamma < T$ counted with multiplicity, 
and the function $\Gamma$ is the gamma-function.
Therefore, the function $S_{m}(t)$ being an $m$-th iterated integral of $S(t)$ is also a remarkable object, 
and the study for $S_{m}(t)$ has been done by many mathematicians.
For example, Littlewood \cite{L1924} and Selberg \cite{SS} showed $S_{m}(t) \ll_{m} \log{t} / (\log{\log{t}})^{m+1}$
for nonnegative integer $m$ under the Riemann Hypothesis.
It is also known in an unpublished work by Ghosh and Goldston (see pp.334--335 in \cite{T}) 
that the Lindel\"of Hypothesis is equivalent to the estimate $S_{1}(t) = o(\log{t})$.
Further, if the estimate $S_{1}(t) = o(\log{t} / (\log{\log{t}})^{2})$ holds, 
then we can obtain the interesting estimate $S(t) = o(\log{t} / \log{\log{t}})$.
This fact can be immediately obtained by Lemma 5 in \cite{CCM2013}.
Moreover, Fujii \cite{Fu2002} showed that the Riemann Hypothesis is equivalent to the assertion that, 
for any integer $m \geq 3$, the estimate $S_{m}(t) = o(T^{m-2})$ holds. 
Hence, we are interested in properties of $S_{m}(t)$.
On the other hand, we could expect that the real part of the logarithm of the Riemann zeta-function also 
has the information of zeros of $\zeta(s)$. 
Actually, the behavior of $\log{\zeta(s)}$ on $s$ close to a zero $\rho$ becomes roughly like $\log(s - \rho)$
whose real part is singular around the zero $\rho$. 
From this observation, it would be expected that the real part of $\eta_{m}(s)$ also has important information of zeros, 
and to understand clearly this observation, we show a certain explicit approximation formula for $\eta_{m}(s)$ in this paper.
The formula can be also applied to the value distribution of $\log{\zeta(1/2 + it)}$ and $\eta_{m}(s)$.
 

Throughout this paper, we use the following notations.

\begin{notations*}
Let $s = \s + it$ be a complex number with $\s$, $t$ real numbers, 
and $\rho = \b + i\gamma$ be a nontrivial zero of $\zeta(s)$ with $\b$, $\gamma$ also real numbers.
Let $\Lam(n)$ be the von Mangoldt function.

Let $H \geq 1$ be a real parameter.
The function $f: \RR \rightarrow [0, +\infty)$ is mass one and supported on $[0, 1]$, and further $f$ is a
$C^{1}([0, 1])$-function, or for some $d \geq 2$ $f$ belongs to $C^{d-2}(\RR)$ and is a $C^{d}([0, 1])$-function.
For such $f$'s, we define the number $D(f)$, and functions $u_{f, H}$, $v_{f, H}$ by 
\begin{align}	\label{def_D}
D(f) = \max\{ d \in \ZZ_{\geq 1} \cup \{+\infty\} \mid \text{$f$ is a $C^{d}([0, 1])$-function} \},
\end{align}
$u_{f, H}(x) = Hf(H\log(x/e))/x$, and
\begin{gather}
\label{def_v}
v_{f, H}(y) = \int_{y}^{\infty}u_{f, H}(x)dx,
\end{gather}
respectively. Further, for each integer $m \geq 0$, the function $U_{m}$ is defined by
\begin{gather}
\label{def_Um}
U_{m}(z) = \frac{1}{m!}\int_{0}^{\infty}\frac{u_{f, H}(x)}{(\log{x})^{m}}E_{m+1}^{*}(z\log{x})dx 
\end{gather}
for $\Im(z) \not= 0$.
Here, $E_{m+1}^{*}(z) = E_{m+1}^{*}(x+iy)$ is the function of a little modified $m$-th exponential integral defined by
\begin{align*}
E_{m+1}^{*}(z)
:= \int_{x+iy}^{+\infty+iy}(w - (x+iy))^{m}\frac{e^{-w}}{w} dw
= \int_{z}^{\infty}(w - z)^{m}\frac{e^{-w}}{w} dw.
\end{align*}
When $\Im(z) = 0$, then $U_{m}(x) = \lim_{\e \uparrow 0}U_{m}(x + i\e)$.

Let $X \geq 3$ be a real parameter.
The function $Y_{m}(s, X)$ is defined by
\begin{align*}
Y_{m}(s, X) = \l\{
\begin{array}{cl}
\d{\sum_{|s - \rho| \leq 1/\log{X}}\log((s - \rho)\log{X})}													&	\text{$m = 0$,}\\[7mm]
\d{2\pi \sum_{k = 0}^{m-1}\frac{i^{m-1-k}}{(m-k)! k!}\us{\b > \s}{\sum_{0 < \gamma < t}}(\b - \s)^{m-k}(t - \gamma)^{k}}
																															&	\text{$m \geq 1$.}
\end{array}
\r.
\end{align*}
In this paper, we take the branch of $\log{z}$ by $-\pi \leq \arg(z) < \pi$.
Here, we may represent $Y_{m}(s, X)$ by $Y_{m}(s)$ in the case  
$m \geq 1$ since $Y_{m}(s, X)$ does not depend on $X$ in this case.
\end{notations*}

\begin{remark}	\label{rmk_f}
From the above definitions, the function $u_{f, H}$ is mass one and supported on $[e, e^{1+1/H}]$, 
and further $u_{f, H}$ is a $C^{1}([e, e^{1+1/H}])$-function, 
or $u_{f, H}$ belongs to $C^{d-2}(\RR_{>0})$ and is a $C^{d}([e, e^{1+1/H}])$-function for some integer $d \geq 2$.
We also note that 
$v_{f, H}$ is a nonnegative continuous function on $\RR_{>0}$ and satisfies $v_{f, H}(y) = 0$ for $y \geq e^{1+1/H}$ and 
$v_{f, H}(y) = 1$ for $0 < y \leq e$.
\end{remark}

\begin{remark}	\label{rmk_Y}
Note that some remarks for $Y_{m}(s, X)$. When $m = 0$, the real part of it is always non-positive.
When $m = 1$, the function $Y_{1}(s)$ has the following simple formula
\begin{align*}
Y_{1}(s) = 2\pi \us{\b > \s}{\sum_{0 < \gamma < t}}(\b - \s),
\end{align*}
and its value is always nonnegative and always zero for $\s \geq 1/2$ under the Riemann Hypothesis.
Next, we suppose $m \geq 2$. Then if the Riemann Hypothesis is true, $Y_{m}(s)$ is always zero for $\s \geq 1/2$.
On the other hand, if the Riemann Hypothesis is false, the value of $Y_{m}(s)$ becomes big in $\s$ close to $1/2$. 
Actually, there exists a nontrivial zero $\rho_{0} = \b_{0} + i\gamma_{0}$ with $\b_{0} > 1/2$, then we have
\begin{gather}	\label{EVY}
\begin{gathered}
\Re(Y_{m}(s)) \geq (\b_{0} - \s)t^{m-1} + O\l(t^{m-3}\log{t}\r),\\
\Im(Y_{m}(s)) \geq (\b_{0} - \s)t^{m-2} + O\l(t^{m-4}\log{t}\r)
\end{gathered}
\end{gather}
for a fixed $\s$ with $1/2 \leq \s < \b_{0}$.
\end{remark}

Now, we state the main theorem in this paper. 

\begin{theorem}	\label{Main_Prop}
Let $m$, $d$ be a nonnegative integers with $d \leq D(f)$, and $H$, $X$ real parameters with $H \geq 1$, $X \geq 3$.
Then, for any $\s \geq 1/2$, $t \geq 14$, we have
\begin{align*}
\eta_{m}(s)
= i^{m}\sum_{2 \leq n \leq X^{1+1/H}}\frac{\Lam(n)v_{f, H}\l( e^{\log{n}/\log{X}} \r)}{n^{s}(\log{n})^{m + 1}}
+Y_{m}(s, X) + R_{m}(s, X, H).
\end{align*}
Here the error term $R_{m}(s, X, H)$ satisfies the estimate
\begin{multline}	\label{ESRm}
R_{m}(s, X, H)
\ll_{f, d} \frac{X^{2(1-\s)} + X^{1-\s}}{t(\log{X})^{m+1}}
+ \frac{1}{(\log{X})^{m}}\sum_{|t - \gamma| \leq \frac{1}{\log{X}}}(X^{2(\b - \s)} + X^{\b - \s})\\
+ \frac{1}{(\log{X})^{m + 1}}\sum_{|t - \gamma| > \frac{1}{\log{X}}}\frac{X^{2(\b - \s)} + X^{\b - \s}}{|t - \gamma|}
\min_{0 \leq l \leq d}\l\{\l( \frac{H}{|t - \gamma|\log{X}} \r)^{l}\r\}.
\end{multline}
Moreover, if the Riemann Hypothesis is true, for $1 \leq H \leq t/2$, $3 \leq X \leq t$, we have
\begin{align}	\label{ESRm2}
R_{m}(s, X, H)
\ll_{f} X^{1/2 - \s}\frac{\log{t}}{(\log{X})^{m}}\l(\frac{1}{\log{\log{t}}} + \frac{\log(H+2)}{\log{X}}\r).
\end{align}
\end{theorem}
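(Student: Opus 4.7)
The plan is to prove the case $m=0$ by a Mellin-transform contour shift and then obtain $m\geq 1$ by iterated integration in $t$. The substantive step is $m=0$; for $m\geq 1$, each integration $\int_{0}^{t}$ converts the weight $i^{m-1}/(\log n)^{m}$ into $i^{m}/(\log n)^{m+1}$ via $\int_{0}^{t}e^{-it'\log n}dt'=i(e^{-it\log n}-1)/\log n$, with the constants of integration absorbed into $c_{m}(\sigma)$ by design. Iterating the singular contributions, together with careful handling of the improper integrals across ordinates $\gamma_{j}$, reproduces the finite double sum defining $Y_{m}(s)$ via a binomial identity on $(\beta-\sigma)^{m-k}(t-\gamma)^{k}$. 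Integration of the error is routine.

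For $m=0$, I start from the Dirichlet series $\log\zeta(s)=\sum_{n\geq 2}\Lambda(n)/(n^{s}\log n)$, valid for $\Re(s)>1$, and represent the weighted truncation as an inverse Mellin transform:
\[
\sum_{n\geq 2}\frac{\Lambda(n)\,v_{f,H}(e^{\log n/\log X})}{n^{s}\log n}
=\frac{1}{2\pi i}\int_{(c)}\frac{M(w\log X)}{w}\,\log\zeta(s+w)\,dw,
\]
where $M(z)=\int_{0}^{\infty}u_{f,H}(y)y^{z}dy$, obtained by integration by parts using $v_{f,H}'=-u_{f,H}$, and $c>0$ is chosen so that $\Re(s+w)>1$ on the contour. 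Repeated integration by parts, exploiting the smoothness $d\leq D(f)$ and the support in $[e,e^{1+1/H}]$, yields the decay $M(w\log X)\ll_{f,d}X^{\Re w}(H/(|w|\log X))^{d}$ in vertical strips; this produces the $\min_{l}$ factor in \eqref{ESRm}. The functions $U_{m}$ from the notations are these Mellin weights re-expressed through the exponential integrals $E_{m+1}^{*}$, which reappear naturally as residue data.

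I then shift the contour leftward past $\Re w=0$ onto a line near $\Re w=1/2-\sigma-1/\log X$. The shift picks up: (i) a simple pole at $w=0$ from $1/w$ with residue $M(0)\log\zeta(s)=\log\zeta(s)$; (ii) a logarithmic singularity of $\log\zeta(s+w)$ at $w=1-s$, estimated by integration around a small circle to contribute at most $X^{2(1-\sigma)}/(t(\log X)^{m+1})$; and (iii) logarithmic singularities at $w=\rho-s$ for each nontrivial zero $\rho$ swept out. Using the local expansion $\log\zeta(s+w)=\log((s+w-\rho)\log X)-\log\log X+(\text{holomorphic})$ about each zero and evaluating $M(w\log X)/w$ at $w=\rho-s$, the contributions from zeros with $|s-\rho|\leq 1/\log X$ assemble precisely into $Y_{0}(s,X)$, while zeros with $|t-\gamma|>1/\log X$ feed into $R_{0}$ with the smoothness gain $M((\rho-s)\log X)\ll (H/(|t-\gamma|\log X))^{d}$. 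Classical convexity bounds for $\log\zeta$ on the shifted line control the remaining integral.

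Under the Riemann Hypothesis, $\beta=1/2$ for every zero, so $X^{2(\beta-\sigma)}+X^{\beta-\sigma}\leq 2X^{1/2-\sigma}$. To obtain \eqref{ESRm2} I combine this with the conditional short-interval bound $\#\{\gamma:|t-\gamma|\leq 1/\log X\}\ll \log t/\max(\log X,\log\log t)$ (Selberg-type) and a dyadic partition of the far-zero sum whose geometric cutoff at $|t-\gamma|\asymp H/\log X$ produces the $\log(H+2)/\log X$ factor. The main obstacle is step (iii): uniformly over $\sigma\geq 1/2$ and $t\geq 14$, cleanly isolating the close-zero contributions that assemble into $Y_{0}$ from the distant-zero contributions that enter $R_{0}$ with the $f$-dependent smoothness gain, while still controlling the shifted-contour integral by only unconditional bounds. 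Organising the argument so that a single contour-shift yields both the unconditional estimate \eqref{ESRm} and, upon plugging in $\beta=1/2$ together with Selberg's short-interval zero count, the refined bound \eqref{ESRm2}, is where the technical work concentrates.
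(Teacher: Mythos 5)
Your $m=0$ argument is in the same spirit as the paper's, which also proceeds via a Mellin inversion of the smoothed Dirichlet polynomial and a contour shift (following Gonek--Hughes--Keating). One point to tidy up: the paper applies the Mellin device to $\sum_n \Lambda(n)n^{-z}v_{f,H}(\cdot)$, obtaining $-\frac{\zeta'}{\zeta}(z+\tfrac{w}{\log X})$ in the integrand, shifts the contour while $\zeta'/\zeta$ is \emph{meromorphic}, and only afterwards integrates horizontally from $\infty + it$ to $s$ to recover $\log\zeta$. You instead insert the extra $1/\log n$ from the outset, which puts $\log\zeta(s+w)$ in the integrand; shifting a contour past its branch singularities at $w=\rho-s$ and $w=1-s$ is feasible but needs more justification than a residue computation (you are really tracking jumps across branch cuts, which is precisely what the $E_{m+1}^*$ integrals in $U_m$ encode). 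The paper's ordering — shift with $\zeta'/\zeta$, then integrate — avoids this.

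The genuine gap is in the $m\geq 1$ step. You propose to iterate the $m=0$ formula in the vertical direction, asserting that "iterating the singular contributions" together with a binomial identity "reproduces the finite double sum defining $Y_m$." But $Y_0(s,X)$ and $Y_m(s)$ for $m\geq 1$ have qualitatively different structures: $Y_0$ is a sum of $\log((s-\rho)\log X)$ over zeros within distance $1/\log X$ of $s$, while $Y_m$ for $m\geq 1$ is a polynomial in $(\b-\s)$ and $(t-\g)$ summed over zeros with $\b>\s$, $0<\g<t$. The latter is not $\int_0^t Y_0\,dt'$ or any naive iteration of it — it arises from the jumps the improper integral $\int_0^t\log\zeta(\s+it')\,dt'$ acquires at the ordinates, i.e.\ from \emph{Littlewood's lemma} (the argument-principle identity \eqref{LLZ}). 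The paper's Lemma \ref{BFQm} uses exactly this to convert the iterated $t$-integral into the horizontal integral $\frac{i^m}{(m-1)!}\int_\s^\infty(\a-\s)^{m-1}\log\zeta(\a+it)\,d\a + Y_m(s)$, and then Proposition \ref{BPI} plugs the $m=0$ explicit formula into that $\a$-integral, using the key identity that $\int_\s^\infty (\a-\s)^{m-1}U_0((\a+it-\rho)\log X)\,d\a$ reproduces $(\log X)^{-m}\,U_m((s-\rho)\log X)$. Without naming Littlewood's lemma and performing this horizontal reduction, "careful handling of the improper integrals" is not a proof step — it's the proof. Moreover "integration of the error is routine" is optimistic: what is actually routine is the $\a$-integration of $U_0$ (Lemma \ref{GUE} then gives the $\min_l$ decay for $U_m$), not a direct $t$-integration of the $m=0$ remainder. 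The conditional estimate via a dyadic split at $|t-\g|\asymp H/\log X$ is the right idea and matches estimates \eqref{MPE1}--\eqref{MPE2}.
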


The important point of this theorem is that, by $Y_{m}(s, X)$, we can express explicitly 
the contribution of certain zeros which have big influence to $\eta_{m}(s)$.
Actually, from this theorem, we can take out the information of singularities coming from such zeros.
Some consequences from this fact will be described in the next section.

Note some remarks on this theorem. 
First, when $m=0$, and $H$ is large, for example $H = X$, 
this formula becomes an assertion close to the hybrid formula of Gonek, Hughes, and Keating \cite[Theorem 1]{GHK2007}.
In fact, this theorem is proved by calculating the contribution of nontrivial zeros which is based on Proposition \ref{BPI}, 
and the proposition in the case of $H = X$, $m=0$ becomes the almost same as their formula.
On the other hand, as we can see from Theorem \ref{Main_Prop}, it becomes difficult to obtain a good estimate 
for the contribution of nontrivial zeros and mean value estimates when $H$ is large. 
From this reason, we introduce the new parameter $H$ which can control the length of smoothing functions.
Although most of discussions and results in the following are obtained by this theorem in the case $H$ is small, 
the theorem in the case $H$ is large is also useful 
when we discuss a Dirichlet polynomial without smoothing functions like $\sum_{p \leq X}p^{-1/2-it}$.
Actually, we will mention an estimate of this Dirichlet polynomial under the Riemann Hypothesis in inequality \eqref{GSO} below.

\section{\textbf{Applications of the main theorem}}

In this section, we state some consequences of Theorem \ref{Main_Prop}.
The consequences are related with the following:
\begin{itemize}
\item[1. ] An equivalence between the order of magnitude of  $\eta_{m}(s)$ and the zero-free region of $\zeta(s)$, 
\item[2. ] A relation between the prime numbers and the distribution of zeros of $\zeta(s)$ under the Riemann Hypothesis,
\item[3. ] The value distribution of $\log{|\zeta(1/2+it)|}$,
\item[4. ] A mean value theorem involving $\eta_{m}(s)$,
\item[5. ] The value distribution of $\eta_{m}(1/2+it)$.
\end{itemize}
We will state the details of these results in the following five sections.

\subsection{\textbf{An equivalence between the magnitude of the order of $\eta_{m}(s)$ and the zero-free region of $\zeta(s)$}}
\mbox{}

To begin with, we state a consequence which gives an equivalent condition to the zero-free region of $\zeta(s)$.
The consequence is the following.

\begin{corollary}	\label{IFR}
Let $\s \geq 1/2$.
Then the following three statements {\normal (A)}, {\normal (B)}, {\normal (C)} are equivalent.
\begin{itemize}
\setlength{\leftskip}{-5mm}
\item[(A).] The Riemann zeta-function does not have zeros whose real part are greater than $\s$.
\item[(B).] For a fixed integer $m \geq 2$, the estimate
\begin{align*}
\Re\eta_{m}(\s + iT) = o\l( T^{m-1} \r)
\end{align*}
holds as $T \rightarrow + \infty$.
\item [(C).] For a fixed integer $m \geq 3$, the estimate
\begin{align*}
\Im\eta_{m}(\s + iT) = o\l( T^{m-2} \r)
\end{align*}
holds as $T \rightarrow + \infty$.
\end{itemize}
In particular, for a fixed integer $m \geq 2$, the Riemann Hypothesis is equivalent to that the estimate 
\begin{align*}
\eta_{m}(1/2 + iT) = o\l( T^{m-1} \r)
\end{align*}
holds as $T \rightarrow +\infty$.
\end{corollary}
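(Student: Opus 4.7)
The plan is to combine the decomposition in Theorem \ref{Main_Prop} with the structural information about $Y_{m}(s, X)$ gathered in Remark \ref{rmk_Y}. The main theorem writes $\eta_{m}(\sigma + iT)$ as a Dirichlet polynomial plus $Y_{m}(\sigma + iT, X)$ plus an error $R_{m}$, where $Y_{m}$ only sees nontrivial zeros with $\beta > \sigma$. The game is to choose the parameters $X$ and $H$ so that the Dirichlet polynomial and $R_{m}$ are negligible on the scale $T^{m-1}$ (respectively $T^{m-2}$), so that the asymptotics of $\eta_{m}$ are entirely governed by $Y_{m}$. I would fix a small $\epsilon > 0$, a smoothing density $f$ of class at least $C^{2}$ (so that $d = 2$ is admissible), and take $X = T^{\epsilon}$, $H = 1$. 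Chebyshev's bound $\sum_{n \leq Y}\Lam(n) \ll Y$ controls the Dirichlet polynomial by $O(X^{2(1-\sigma)}/(\log X)^{m+1}) \ll X$ on $\Re s = \sigma \in [1/2, 1]$; a term-by-term analysis of \eqref{ESRm} based on the zero-counting estimate $N(t+h) - N(t-h) \ll h\log t + \log t$ bounds the first and near-zero summands, and a dyadic decomposition in $|t - \gamma| > 1/\log X$ with $l = 2$ bounds the far-zero summand. Summing up,
\[
\eta_{m}(\sigma + iT) = Y_{m}(\sigma + iT, X) + O_{f}\bigl(T^{\epsilon}\log T\bigr),
\]
uniformly in $\sigma \in [1/2, 1]$ as $T \to \infty$.

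The equivalences then fall out. If (A) holds at $\sigma$, the sum defining $Y_{m}(\sigma + iT, X)$ is empty, hence $\eta_{m}(\sigma + iT) = O(T^{\epsilon}\log T)$ and both (B) (for $m \geq 2$) and (C) (for $m \geq 3$) follow. Conversely, if (A) fails, so that some zero $\rho_{0} = \beta_{0} + i\gamma_{0}$ has $\beta_{0} > \sigma$, the lower bounds \eqref{EVY} give $\Re Y_{m}(\sigma + iT) \geq (\beta_{0} - \sigma)T^{m-1} + O(T^{m-3}\log T)$ and the analogous inequality for $\Im Y_{m}$ with $T^{m-2}$ in place of $T^{m-1}$. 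Combining with the displayed approximation forces $\Re\eta_{m}(\sigma + iT) \gg T^{m-1}$ and $\Im\eta_{m}(\sigma + iT) \gg T^{m-2}$, contradicting (B) and (C) respectively. Finally, the ``in particular'' assertion is the case $\sigma = 1/2$: RH is exactly (A) at $\sigma = 1/2$, and under RH the approximation gives $\eta_{m}(1/2 + iT) \ll T^{\epsilon}\log T = o(T^{m-1})$; conversely $\eta_{m}(1/2 + iT) = o(T^{m-1})$ yields $\Re\eta_{m}(1/2 + iT) = o(T^{m-1})$, which by (B)$\Rightarrow$(A) at $\sigma = 1/2$ gives RH.

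The main obstacle is the precise estimation of the third summand of \eqref{ESRm}, a sum over all nontrivial zeros with $|t - \gamma| > 1/\log X$ damped only by $1/|t - \gamma|$. The $\min$-factor is essential for absolute convergence: choosing $l = 2$ replaces $1/|t - \gamma|$ by $H^{2}/\bigl((|t - \gamma|\log X)^{2}|t - \gamma|\bigr)$, and then grouping zeros into dyadic shells $|t - \gamma| \in [2^{k}/\log X,\, 2^{k+1}/\log X]$ together with $\#\{\text{zeros in shell } k\} \ll \max(2^{k}/\log X, 1)\log T$ turns the bound into a convergent geometric series, producing the quoted $O(X \log T/(\log X)^{m})$. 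Once this estimate is in place, the rest of the argument is bookkeeping and involves no unproven hypotheses.
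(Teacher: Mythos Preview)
Your argument is correct and follows the same overall strategy as the paper: use Theorem \ref{Main_Prop} to write $\eta_{m}(\sigma+iT)=Y_{m}(\sigma+iT)+\text{(small)}$, then invoke the lower bounds \eqref{EVY} from Remark \ref{rmk_Y}. The difference lies entirely in the parameter choice and the resulting amount of work.

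The paper takes $X=3$, $H=1$. With $X$ an absolute constant the Dirichlet polynomial has $O(1)$ terms and is $O_{m}(1)$; every factor $X^{2(\beta-\sigma)}+X^{\beta-\sigma}$ in \eqref{ESRm} is uniformly bounded; and choosing $l=1$ in the $\min$ already turns the far-zero sum into $\sum_{\rho}1/(1+(t-\gamma)^{2})\ll\log t$ by the standard density estimate \eqref{BLDZ}. This yields the clean
\[
\eta_{m}(s)=Y_{m}(s)+O_{m}(\log t),
\]
with no dyadic decomposition and no loss of $T^{\epsilon}$. Your choice $X=T^{\epsilon}$ forces you to bound a genuine Dirichlet polynomial of length $X^{2}$, to carry $X^{2(\beta-\sigma)}\ll X=T^{\epsilon}$ through the zero sums, and to run the dyadic argument with $l=2$ to recover absolute convergence. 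All of this is correct and ultimately sufficient since $T^{\epsilon}\log T=o(T^{m-1})$ for $m\ge 2$ and $=o(T^{m-2})$ for $m\ge 3$, but it buys nothing: the corollary only needs a bound that is $o(T)$, and the paper's $O(\log t)$ is both simpler to obtain and sharper.
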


This corollary is easily obtained from Theorem \ref{Main_Prop}.
Actually, we can show it by the following little discussion.

Applying Theorem \ref{Main_Prop} as $X = 3$, $H = 1$, for any positive integer $m$, we can obtain the formula
\begin{align*}
\eta_{m}(s) = Y_{m}(s) + O_{m}\l( \sum_{\rho}\frac{1}{1 + (t - \gamma)^2} \r).
\end{align*}
Now, by the well known estimate (cf. p.98 \cite{DM}) 
\begin{align}	\label{BLDZ}
\sum_{\rho}\frac{1}{1 + (t - \gamma)^2} \ll \log{t},
\end{align}
the above $O$-term is $\ll_{m} \log{t}$.
Hence, we obtain
\begin{align}	\label{UBFeY}
\eta_{m}(s) = Y_{m}(s) + O_{m}(\log{t}).
\end{align}
Thus, from estimates \eqref{EVY} and \eqref{UBFeY}, we obtain Corollary \ref{IFR}.

Fujii \cite{Fu2002} showed an equivalence for the Riemann Hypothesis and an estimate for $S_{m}(t)$.
He discussed only the behavior of the Riemann zeta-function on the critical line, and
this corollary means that his equivalence can be generalized to the critical strip naturally.
Moreover, Fujii's result is an equivalence for $S_{m}(t)$ in the case $m \geq 3$.
On the other hand, thanks to the consideration on the real part of iterated integrals of the logarithm 
of the Riemann zeta-function, we also have the same type of equivalence for $m = 2$. \\[-3mm]

\subsection{\textbf{A Dirichlet polynomial involving prime numbers and the distribution of zeros of $\zeta(s)$ in short intervals}}	\label{sec_fz}
\mbox{}

In this section, we state some consequences of Theorem \ref{Main_Prop} for a relationship between prime numbers and
the distribution of nontrivial zeros of $\zeta(s)$ in short intervals. 
These consequences are obtained from a principle of taking out 
the information of singularities coming from certain zeros by using Theorem \ref{Main_Prop}.

We define the weighted Dirichlet polynomial $P_{f}(s, X)$ by
\begin{align}	\label{def_P}
P_{f}(s, X) = \sum_{p \leq X^2}\frac{v_{f, 1}(e^{\log{p}/\log{X}})}{p^{s}}
\end{align}
for $X \geq 3$. Here, the sum runs over prime numbers. 
Moreover, the function $\tilde{N}(t, h)$ means the number of zeros 
$\rho = \b + i\gamma$ of $\zeta(s)$ with $|t - \gamma| \leq h$ counted with multiplicity.
Then we can obtain the following theorem.

\begin{theorem}	\label{RELZZ}
Assume the Riemann Hypothesis. 
Let $f$ be a nonnegative mass one $C^{1}([0, 1])$-function supported in $[0, 1]$. 
Then, for $t \geq 14$, $\log{t} \leq X \leq t$, we have
\begin{multline}	\label{Lam_sum_ub}
P_{f}(1/2+it, X)
= \log{\l(\frac{\log{\log{t}}}{\log{X}}\r)} \times \tilde{N}\l(t, \frac{1}{\log{X}}\r) +\\
+ \sum_{\frac{1}{\log{X}} < |t - \gamma| \leq \frac{1}{\log{\log{t}}}}\log{\l( |t - \gamma|\log{\log{t}} \r)}
+ O_{f}\l( \frac{\log{t}}{\log{\log{t}}} \r).
\end{multline}
In particular, we have
\begin{gather}	\label{Lam_sum_ub_R_2}
\max_{3 \leq X \leq t}\Re\l(P_{f}(1/2+it, X) \r)
\ll_{f} \frac{\log{t}}{\log{\log{t}}},\\[3mm]
\label{Lam_sum_lb_R_2}
\max_{3 \leq X \leq t}\Re\l(-P_{f}(1/2+it, X) \r)
\ll_{f} \log{t},
\end{gather}
and
\begin{align}	\label{Lam_sum_ub_I_2}
\max_{3 \leq X \leq t}\l|\Im\l(P_{f}(1/2+it, X) \r)\r|
\ll_{f} \frac{\log{t}}{\log{\log{t}}}.
\end{align}
\end{theorem}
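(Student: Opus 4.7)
The plan is to apply Theorem \ref{Main_Prop} with $m=0$, $H=1$, and $\sigma = 1/2$ at the given $X$ and at an auxiliary value $X^{\ast} := \log t$, and then to subtract the two resulting identities. First I would strip off the proper prime powers in the Dirichlet polynomial of Theorem \ref{Main_Prop}: since $\Lambda(n)/\log n = 1/k$ for $n = p^{k}$, this contribution is majorized trivially by $\tfrac12\sum_{p \leq X}p^{-1} + O(1) \ll \log\log X$, which is absorbed in $O(\log t/\log\log t)$. Second, the RH estimate \eqref{ESRm2} with $H = 1$ gives $R_{0}(1/2+it, Y, 1) \ll_{f} \log t/\log\log t + \log t/\log Y$; the hypothesis $X \geq \log t$ together with $X^{\ast} = \log t$ ensures both $Y = X$ and $Y = X^{\ast}$ yield errors of size $O(\log t/\log\log t)$. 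Both applications of Theorem \ref{Main_Prop} hence produce
\begin{equation*}
\log\zeta(1/2+it) = P_{f}(1/2+it, Y) + Y_{0}(1/2+it, Y) + O_{f}\!\l(\frac{\log t}{\log\log t}\r),
\end{equation*}
and subtracting together with the trivial bound $|P_{f}(1/2+it, X^{\ast})| \leq \sum_{p \leq (\log t)^{2}}p^{-1/2} \ll \log t/\log\log t$ yields
\begin{equation*}
P_{f}(1/2+it, X) = Y_{0}(1/2+it, X^{\ast}) - Y_{0}(1/2+it, X) + O_{f}\!\l(\frac{\log t}{\log\log t}\r).
\end{equation*}

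To extract the main term I would expand this difference under RH. Writing $\rho = 1/2 + i\gamma$, one has $1/2 + it - \rho = i(t-\gamma)$, and the principal branch gives
\begin{equation*}
\log\l((1/2+it-\rho)\log Y\r) = \log(|t-\gamma|\log Y) + i\frac{\pi}{2}\sgn(t-\gamma) \quad (t\neq \gamma).
\end{equation*}
The hypothesis $X \geq \log t$ forces $1/\log X \leq 1/\log\log t$, so I would split the sum defining $Y_{0}(1/2+it, X^{\ast})$ at the radius $1/\log X$. The zeros with $|t-\gamma|\leq 1/\log X$ then contribute $\log(\log\log t/\log X)\,\tilde N(t, 1/\log X)$ to the real part of the difference, the zeros in $1/\log X < |t-\gamma|\leq 1/\log\log t$ contribute precisely the stated partial sum, and all imaginary contributions are bounded by $\tilde N(t, 1/\log\log t) \ll \log t/\log\log t$ via \eqref{BLDZ}. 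This establishes \eqref{Lam_sum_ub}.

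Finally, each main term on the right of \eqref{Lam_sum_ub} is real and non-positive (every logarithm has an argument $\leq 1$), giving \eqref{Lam_sum_ub_R_2} and \eqref{Lam_sum_ub_I_2} immediately. For \eqref{Lam_sum_lb_R_2}, the first main term's contribution to $-\Re P_{f}$ is controlled by $\tilde N(t, h)\ll 1+h\log t$ and a short optimization in $X$, staying at $O(\log t/\log\log t)$. The main obstacle is the second term: the best I can achieve by bounding each summand by $O(\log\log t)$ and using that there are $\ll \log t/\log\log t$ zeros in a window of length $2/\log\log t$ is $O(\log t)$. This clustering of zeros within distance $1/\log\log t$ of $t$ is precisely what prevents an $O(\log t/\log\log t)$ lower bound on $\Re P_{f}$, and accounts for the weaker estimate \eqref{Lam_sum_lb_R_2}.
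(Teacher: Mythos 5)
Your route is correct and genuinely different from the paper's. The paper applies Theorem \ref{Main_Prop} once at $X$ and then expresses $\log\zeta(1/2+it)$ itself via Lemma \ref{BLFRUR} (Lemma~13.20 in Montgomery--Vaughan, integrated in $\sigma$) together with estimate~(13.44) there; you instead apply Theorem \ref{Main_Prop} a second time at the auxiliary abscissa $X^{\ast}=\log t$ and eliminate $\log\zeta(1/2+it)$ by subtraction, using the trivial bound $|P_f(1/2+it,X^{\ast})| \ll \sum_{p\le(\log t)^{2}}p^{-1/2} \ll \log t/\log\log t$. This buys you a more self-contained proof (no further appeal to the zeta-function's behavior near the line beyond what Theorem \ref{Main_Prop} already encodes), at the modest cost of a second invocation of the main theorem.

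Two small corrections. First, a citation slip that actually matters: you bound the stray imaginary contributions by $\tilde N(t,1/\log\log t)\ll\log t/\log\log t$ ``via \eqref{BLDZ}''. But \eqref{BLDZ} only yields $\tilde N(t,1/\log\log t)\ll\log t$; the sharper $\ll\log t/\log\log t$ is \eqref{BINESZ} (Lemma 13.19 in Montgomery--Vaughan, which needs RH), and you do need that sharper form, or the error term in \eqref{Lam_sum_ub} degrades to $O(\log t)$. Second, a minor misattribution in the last paragraph: the \emph{first} main term can also be of size $\asymp\log t$, not merely $O(\log t/\log\log t)$. Indeed for $X$ near $t$ one has $|\log(\log\log t/\log X)|\asymp\log\log t$ while $\tilde N(t,1/\log X)\le\tilde N(t,1/\log\log t)\ll\log t/\log\log t$, so the product can reach order $\log t$; the $O(\log t)$ ceiling in \eqref{Lam_sum_lb_R_2} is thus forced by both terms, not only by the second. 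This does not change your conclusion, since both are nonpositive and both are $O(\log t)$.
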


Here we focus on estimates \eqref{Lam_sum_ub_R_2}, \eqref{Lam_sum_ub_I_2}.
From these estimates, we would expect that it is possible to improve estimate \eqref{Lam_sum_lb_R_2} at $\log{t}/\log{\log{t}}$.
This expectation is coming from the following discussion.
By the randomness of the prime numbers, it is probably true that the numbers $\{t\log{p_1}\}, \dots, \{t\log{p_n}\}$ 
are randomly distributed on $[0, 1)$ for $t \geq 1$. Here, $\{x\}$ means the fractional part of $x$.
Hence, the author believes that there is not a big difference among the bounds of the real and imaginary parts of a weighted Dirichlet polynomial 
like $P_{f}(s, X)$ and their positive and negative parts.
From this observation, the author suggests the following conjecture.

\begin{conjecture}	\label{RC}
Let $\s$ be a real number, and $f$ be a nonnegative mass one $C^{1}([0, 1])$-function supported in $[0, 1]$.
For sufficiently large $T > 0$,
\begin{gather*}
\max_{14 \leq t \leq T}\max_{3 \leq X \leq t}\Re(P_{f}(\s+it, X)) \asymp \max_{14 \leq t \leq T}\max_{3 \leq X \leq t}\Re(-P_{f}(\s+it, X)), \\
\max_{14 \leq t \leq T}\max_{3 \leq X \leq t}\Re(P_{f}(\s+it, X)) \asymp \max_{14 \leq t \leq T}\max_{3 \leq X \leq t}\Im(P_{f}(\s+it, X)), 
\end{gather*}
and 
\begin{gather*}
\max_{14 \leq t \leq T}\max_{3 \leq X \leq t}\Im(P_{f}(\s+it, X)) \asymp \max_{14 \leq t \leq T}\max_{3 \leq X \leq t}\Im(-P_{f}(\s+it, X)).
\end{gather*}
\end{conjecture}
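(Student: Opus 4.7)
The conjecture asks only for comparability of the four maxima, not for a specific rate, so the plan is to exploit the single mechanism behind all four $\asymp$ relations: equidistribution of prime phases. Writing
\[
P_{f}(\s+it,X)=\sum_{p\le X^{2}}a_{p}(X)\bigl(\cos(t\log p)-i\sin(t\log p)\bigr),\qquad a_{p}(X)=\frac{v_{f,1}(e^{\log p/\log X})}{p^{\s}},
\]
the $\QQ$-linear independence of $\{\log p\}$ together with Kronecker--Weyl implies that $(t\log p)_{p\le X^{2}}$ equidistributes on a torus as $t$ varies. On that torus, the four linear functionals ``$\pm\Re$ and $\pm\Im$ of the sum'' are conjugate under the measure-preserving involutions $\theta_{p}\mapsto-\theta_{p}$ and $\theta_{p}\mapsto\theta_{p}+\pi/2$, which is the heuristic source of the conjectured symmetries.

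To realize this heuristic, I would first match the four maxima from above by a Selberg-type even-moment computation: compute $\frac{1}{T}\int_{T}^{2T}(\Re P_{f}(\s+it,X))^{2k}\,dt$ and its $\Im$ analogue and show that both are asymptotic to $(2k-1)!!\,V(\s,X)^{k}$ with the same variance $V(\s,X)=\tfrac{1}{2}\sum_{p\le X^{2}}a_{p}(X)^{2}$, the off-diagonal contributions being negligible as long as $X^{2k}$ stays polynomial in $T$. Choosing $k$ logarithmically large and union-bounding over a dyadic grid of $X\in[3,t]$ yields a single upper bound of order $\sqrt{V(\s,X^{\ast})\log T}$ for all four maxima, so any two of them are mutually $\ll$ one another. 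For the matching lower bounds I would invoke the resonance method of Soundararajan and Bondarenko--Seip: given a sign pattern $(\epsilon_{1},\epsilon_{2})\in\{\pm1\}^{2}$, construct a resonator $R(t)=\sum_{n\in\mathcal{N}}r(n)n^{-it}$ whose coefficients are tuned so that $\int_{T}^{2T}|R(t)|^{2}(\epsilon_{1}\Re+\epsilon_{2}\Im)P_{f}(\s+it,X)\,dt$ is large and positive; because the resonator coefficients transform covariantly under exactly the same involutions $\theta_{p}\mapsto-\theta_{p}$, $\theta_{p}\mapsto\theta_{p}+\pi/2$, a single construction produces lower bounds of identical order for all four targets.

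The main obstacle is quantitative and has two layers. Even for fixed $X$, the current resonance lower bound only produces values of order $\sqrt{\log T/\log\log T}$ while the moment upper bound is $\log T/\log\log T$, so the two meet only up to a square root; closing this gap is essentially the central open problem underlying the $\Omega$-theory of $\log\zeta$ and cannot be settled by the scheme above. More seriously, the maximum over $X\in[3,t]$ is essential to the conjecture, and Theorem~\ref{RELZZ} shows that dense clusters of zeros at height $t$ feed asymmetrically into the negative part of $\Re P_{f}(1/2+it,X)$ for small $X$ via the term $\log(\log\log t/\log X)\,\tilde N(t,1/\log X)$: proving that a positive value of comparable magnitude occurs at \emph{some} other choice of $(t',X')$ reduces to a delicate alignment statement between zero clusters and resonating prime polynomials, well beyond current technology and, in my view, the true reason the statement is offered as a conjecture rather than a theorem.
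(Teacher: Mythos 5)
The statement you were asked about is Conjecture \ref{RC}: the paper does not prove it, and offers as support only the heuristic that the fractional parts $\{t\log p_{1}\},\dots,\{t\log p_{n}\}$ should behave like random points of $[0,1)$, together with the one-sided conditional evidence of Theorem \ref{RELZZ} (the bounds \eqref{Lam_sum_ub_R_2} and \eqref{Lam_sum_ub_I_2} versus the weaker \eqref{Lam_sum_lb_R_2}). Your opening paragraph --- Kronecker--Weyl equidistribution on the torus generated by the $\log p$, with the measure-preserving involutions $\theta_{p}\mapsto-\theta_{p}$ and $\theta_{p}\mapsto\theta_{p}+\pi/2$ exchanging the four functionals --- is a sharper articulation of exactly that heuristic, so at this level you and the paper coincide.

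As a proof, however, the proposal has genuine gaps, which to your credit you name yourself. The moment upper bound of Selberg type and the resonance lower bound of Soundararajan/Bondarenko--Seip type do not meet: an upper bound $\ll B$ valid for all four maxima together with lower bounds $\gg b$ with $b$ of strictly smaller order gives no $\asymp$ relation between any two of them, so the scheme as described cannot deliver the conjecture even for a single fixed $X$. Moreover the double maximum over $t\in[14,T]$ and $3\leq X\leq t$ is essential: the number of primes (hence the dimension of the torus) grows with $t$, Kronecker--Weyl gives no uniformity in that dimension, and --- as you correctly observe via Theorem \ref{RELZZ} --- the negative part of $\Re P_{f}(1/2+it,X)$ at small $X$ is driven by clusters of zeros of $\zeta(s)$ through the term $\log\l(\frac{\log\log t}{\log X}\r)\tilde{N}\l(t,\frac{1}{\log X}\r)$, a mechanism that prime-phase equidistribution alone does not see. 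So your text should be read as an informed heuristic plus an accurate diagnosis of why the statement is open, not as a proof; since the paper itself supplies no proof, there is nothing in it that your argument falls short of reproducing, but neither does your argument settle the conjecture.
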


If this conjecture and the Riemann Hypothesis are true, for every certain $f$, we obtain
\begin{align}	\label{WFGKC}
\max_{3\leq X \leq t}|P_{f}(1/2+it, X)|
\ll \frac{\log{t}}{\log{\log{t}}}
\end{align}
from estimates \eqref{Lam_sum_ub_R_2}, \eqref{Lam_sum_ub_I_2}.

Estimate \eqref{WFGKC} can be applied to the distribution of the ordinate of zeros of $\zeta(s)$.
If estimate \eqref{WFGKC} and the Riemann Hypothesis are true, 
by using formula \eqref{Lam_sum_ub} as $X = (\log{t})^{D}$, 
we can obtain the following interesting estimate
\begin{align*}
\tilde{N}\l(t, \frac{1}{D\log{\log{t}}}\r)
\ll \frac{\log{t}}{\log{D} \log{\log{t}}}
\end{align*}
for any $2 \leq D \leq \log{t}/\log{\log{t}}$.
In particular, on the same condition, 
we can improve the estimate of the multiplicity of zeros of the Riemann zeta-function like the following
\begin{align*}
m(\rho) \ll \frac{\log|\gamma|}{(\log{\log{|\gamma|}})^2},
\end{align*}
where $m(\rho)$ means the multiplicity of a zero $\rho = \frac{1}{2} + i\gamma$.
This upper bound is sharp because the following inequality (see Corollary 1 in \cite{GG2007})
\begin{align*}
m(\rho) \leq \l( \frac{1}{2} + o(1) \r)\frac{\log{|\gamma|}}{\log{\log{|\gamma|}}}
\end{align*}
is the best known upper bound under the Riemann Hypothesis at present.
From this observation, the author suggests Conjecture \ref{RC} as an important open problem.

Furthermore, we will find a deeper fact from the same method as the above discussion.
We consider the following estimate
\begin{align}	\label{GSO}
\max_{3 \leq X \leq Y(t)}\l| \sum_{p \leq X}\frac{1}{p^{1/2 + it}} \r|
\leq M(t),
\end{align}
where $Y(t)$, $M(t)$ are some monotonically increasing functions 
with $3 \leq Y(t) \leq t$, $M(t) \ll \sqrt{Y(t)} / \log{Y(t)}$.
Note that an estimate of Dirichlet polynomial without a mollifier is useful 
because by partial summation and assuming estimate \eqref{GSO}, for any certain $f$, we have
$
P_{f}(1/2+it) \ll M(t)
$
for $3 \leq X \leq \sqrt{Y(t)}$.
This fact plays an important role in the following discussion in this section.

From the discussion in \cite[Section 2.2]{FGH2007}, we may expect that estimate \eqref{GSO} is true with 
$Y(t) = t$, $M(t) \asymp \d{\sqrt{\log{t}\log{\log{t}}}}$.
Here, we can obtain some bounds of $Y(t)$ and $M(t)$ under the Riemann Hypothesis.
Assuming the Riemann Hypothesis, by using estimate \eqref{ESRm2} as $H = X$, 
we can show that estimate \eqref{GSO} is true when $Y(t) = t$, $M(t) = \log{t}$.
Moreover, we can also show the inequality $M(t) \gg \sqrt{\log{t} \log{\log{\log{t}}} / \log{\log{t}}}$ 
when the inequality $Y(t) \geq \exp\l( L\sqrt{\log{t}\log{\log{t}} / \log{\log{\log{t}}}} \r)$ 
holds with $L$ sufficiently large constant.
This fact can be shown, for example, by the work of Bondarenko and Seip \cite[Theorem 2]{BS2018} 
and Selberg's formula \cite[Theorem 1]{SCR}.

Now, if estimate \eqref{GSO} and the Riemann Hypothesis are true, 
then we can obtain the following theorem.


\begin{theorem}	\label{VSIZD}
Assume the Riemann Hypothesis and estimate \eqref{GSO}. 
Let $\psi(t)$ be a function with $3 \leq \psi(t) \leq \sqrt{Y(t)}$.
Let $f$ be a nonnegative mass one $C^{1}([0, 1])$-function supported on $[0, 1]$. 
Then, for $t \geq 14$, $\psi(t) \leq X \leq t$, we have
\begin{multline*}
P_{f}(1/2+it, X) 
= \log\l( \frac{\log{\psi(t)}}{\log{X}} \r) \times \tilde{N}\l( t, \frac{1}{\log{X}} \r) +\\
+ \sum_{\frac{1}{\log{X}} < |t - \gamma| \leq \frac{1}{\log{\psi(t)}}}\log\l(|t - \gamma|\log{\psi(t)} \r)
+ O_{f}\l(M(t) + \frac{\log{t}}{\log{\psi(t)}} + \log{\log{X}}\r).
\end{multline*}
In particular, if the Riemann Hypothesis and estimate \eqref{GSO} 
with $Y(t) = t$, $M(t) \asymp \d{\sqrt{\log{t}\log{\log{t}}}}$ are true, 
then by taking $\psi(t) = \exp\l( \sqrt{\frac{\log{t}}{\log{\log{t}}}} \r)$, 
$X = \exp\l( D\sqrt{\frac{\log{t}}{\log{\log{t}}}} \r)$, we have
\begin{align}	\label{CEMZ}
\tilde{N}\l( t, \frac{\sqrt{\log{\log{t}}}}{D\sqrt{\log{t}}} \r) 
\ll \frac{\sqrt{\log{t}\log{\log{t}}}}{\log{D}}
\end{align}
for $3 \leq D \leq \frac{1}{2}\sqrt{\log{t} \log{\log{t}}}$.
\end{theorem}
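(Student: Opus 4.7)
The plan is to apply Theorem~\ref{Main_Prop} with $m=0$ and $H=1$ at the two scales $X$ and $\psi(t)$, and subtract the two identities to eliminate the common $\log\zeta(1/2+it)$ term. At $H=1$ the Dirichlet sum in the theorem recovers $P_f(s,X)$ from \eqref{def_P}, with a prime-power remainder of size $O(\log\log X)$ coming from squares of primes (the $k\geq 3$ powers contribute only $O(1)$). This produces an equation of the shape
\[
P_f(1/2+it,X) = P_f(1/2+it,\psi(t)) - \bigl(Y_0(1/2+it,X)-Y_0(1/2+it,\psi(t))\bigr) + E,
\]
where $E$ collects the differences of the $R_0$ error terms at the two scales plus the prime-power remainder.

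To exploit the hypothesis \eqref{GSO} I would apply partial summation to $P_f(1/2+it,\psi(t))$ with summation function $\sum_{p\leq x}p^{-1/2-it}$; since $\psi(t)^{2}\leq Y(t)$ by assumption, this yields $P_f(1/2+it,\psi(t))\ll_f M(t)$. For $E$, \eqref{ESRm2} with $m=0$, $H=1$, $\sigma=1/2$ gives $R_0\ll \log t\bigl(1/\log\log t+1/\log Z\bigr)$ at $Z\in\{X,\psi(t)\}$, and the dominant $\log t/\log\psi(t)$ is exactly the contribution in the error; any residual $\log t/\log\log t$ is absorbed into $M(t)$, which under a quantitative form of \eqref{GSO} is at least of this size.

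Under the Riemann Hypothesis every $\rho=1/2+i\gamma$, so $Y_0$ becomes a sum of $\log(i(t-\gamma)\log Z)$ over zeros with $|t-\gamma|\leq 1/\log Z$. Splitting the $\psi(t)$-sum at the threshold $1/\log X$ and telescoping the real parts produces exactly
\[
\log\bigl(\log\psi(t)/\log X\bigr)\,\tilde N(t,1/\log X)+\sum_{\frac{1}{\log X}<|t-\gamma|\leq \frac{1}{\log\psi(t)}}\log(|t-\gamma|\log\psi(t)),
\]
while the imaginary parts, each equal to $\pm\pi/2$ under the chosen branch $-\pi\leq\arg<\pi$, sum to at most $\pi\,\tilde N(t,1/\log\psi(t))\ll \log t/\log\psi(t)$ by the classical zero-counting estimate. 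Assembling gives the displayed formula for $P_f(1/2+it,X)$.

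For the consequence \eqref{CEMZ} I substitute $\psi(t)=\exp\sqrt{\log t/\log\log t}$ and $X=\exp(D\sqrt{\log t/\log\log t})$, so that $\log(\log\psi(t)/\log X)=-\log D$ and every $\log(|t-\gamma|\log\psi(t))\leq 0$ in the telescoped sum. The crucial observation is that in the main formula \emph{both} the leading term $-\log D\cdot\tilde N(t,1/\log X)$ and the middle sum $S$ are non-positive; taking real parts, bounding $|\Re P_f(1/2+it,X)|\ll M(t)\asymp\sqrt{\log t\log\log t}$ via \eqref{GSO}, and using the $O(\sqrt{\log t\log\log t})$ total error, one obtains
\[
\log D\cdot\tilde N(t,1/\log X)+|S|\ll \sqrt{\log t\log\log t},
\]
from which the first term alone yields \eqref{CEMZ}. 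The main obstacle in this programme is the careful bookkeeping of $R_0$: specifically, confirming that the $1/\log\log t$ piece from \eqref{ESRm2} is genuinely dominated by $M(t)$ under the additional strength supplied by \eqref{GSO}, and checking that the $v_{f,1}$ transition region and the prime-power tails are indeed $O(\log\log X)$ in the subtraction step, since any slippage there would pollute the main term.
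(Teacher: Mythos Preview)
Your two-scale subtraction idea---apply Theorem~\ref{Main_Prop} at both $X$ and $\psi(t)$ and subtract to eliminate $\log\zeta(1/2+it)$---is a clean alternative to the paper's route, which instead computes $\log\zeta(1/2+it)$ directly by integrating a local formula for $\zeta'/\zeta$ (Lemma~\ref{CLF}) and invoking an estimate for $\log\zeta$ slightly off the line (Lemma~\ref{CPERZ}). Your handling of the $Y_0$ difference and of $P_f(1/2+it,\psi(t))\ll_f M(t)$ via partial summation is fine.

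The genuine gap is in your treatment of $R_0$. You invoke \eqref{ESRm2} with $m=0$, $H=1$, $\sigma=1/2$, which produces an error of order $\log t/\log\log t$ at each scale, and then claim this ``is absorbed into $M(t)$, which under a quantitative form of \eqref{GSO} is at least of this size.'' That is false in exactly the application you need: with $M(t)\asymp\sqrt{\log t\log\log t}$ one has $\log t/\log\log t \gg M(t)$ by a factor of $\sqrt{\log t}/(\log\log t)^{3/2}\to\infty$, so the error swamps the main term and \eqref{CEMZ} is lost. The point is that \eqref{ESRm2} uses only the Riemann Hypothesis, whereas the theorem's error $O(M(t)+\log t/\log\psi(t))$ requires feeding the extra input \eqref{GSO} back into the bound for $R_0$. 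The paper does this by first proving, from \eqref{GSO}, the sharpened zero-count $\tilde N(t,1/\log\psi(t))\ll M(t)+\log t/\log\psi(t)$ (Lemma~\ref{CSIZ}) and a correspondingly sharp local formula for $\zeta'/\zeta$ (Lemma~\ref{CLF}); these are then used together with the raw estimate \eqref{ESRm} (not \eqref{ESRm2}) to control the zero sums to the required precision. Your proposal could be repaired along the same lines---use \eqref{ESRm} and bound its zero sums via Lemma~\ref{CSIZ}---but as written the $R_0$ step does not go through. The same issue recurs in your bound for the imaginary part of the $Y_0$ difference: you need $\tilde N(t,1/\log\psi(t))\ll M(t)+\log t/\log\psi(t)$, and the ``classical zero-counting estimate'' under RH alone does not give this.
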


By estimate \eqref{CEMZ}, assuming the Riemann Hypothesis 
and estimate \eqref{GSO} with $Y(t) = t$, $M(t) \asymp \sqrt{\log{t} \log{\log{t}}}$, we have
\begin{align}	\label{CEMZ2}
m(\rho) \ll \sqrt{\frac{\log{|\gamma|}}{\log{\log{|\gamma|}}}}.
\end{align}

Here, we should mention that, under the same condition, the estimate $m(\rho) \ll \d{\sqrt{\log{|\gamma|}\log{\log{|\gamma|}}}}$ 
immediately follows from Selberg's formula \cite[Theorem 1]{SS} and the Riemann-von Mangldt formula \eqref{RVMF}, 
and inequality \eqref{CEMZ2} is an improvement of this estimate.
Hence, from this observation, we may expect that there is an interesting relationship between 
the behavior of $\sum_{p \leq X}p^{-1/2-it}$ and the distribution of zeros of the Riemann zeta-function.

%
%
%
%

\subsection{\textbf{On the value distribution of $\log{|\zeta(1/2+it)|}$}}	\label{VDZ}
\mbox{}

In this section, we consider the value distribution of the Riemann zeta-function.
Now, we define the set $\S(T, V)$ by 
\begin{align}	\label{def_S}
\S(T, V) = 
\l\{ t \in [T, 2T] \mid \log{|\zeta(1/2+it)|} > V \r\}.
\end{align}
Here, we give a result on the value distribution of $\log{|\zeta(1/2+it)|}$.
There are interesting studies on this theme by Soundararajan \cite{SE2008}, \cite{SM2009}.
He showed a lower bound and an upper bound of the Lebesgue measure of $\S(T, V)$, 
and his result for the upper bound is under the Riemann Hypothesis.
In \cite{SM2009}, he mentioned the question that, in how large range of $V$, the following estimate
\begin{align}	\label{SCMU}
\frac{1}{T}\meas(\S\l(T, V\r))
\ll \frac{\sqrt{\log{\log{T}}}}{V}\exp\l(-\frac{V^2}{\log{\log{T}}}\r)
\end{align}
holds. Here, the symbol $\meas(\cdot)$ stands for the Lebesgue measure.
This problem is important because there are some interesting consequences such as 
the mean value estimate and the Lindel\"of Hypothesis.
Actually, if estimate \eqref{SCMU} holds for any large range of $V$, we can obtain the conjectural estimates
\begin{gather*}
\max_{t \in [T, 2T]}\log{|\zeta(1/2+it)|} \ll \sqrt{\log{T}\log{\log{T}}},\\
\int_{T}^{2T}|\zeta(1/2+it)|^{2k}dt \ll T(\log{T})^{k^2}.
\end{gather*}
Here, we should mention Jutila's work \cite{Ju1983}.
He showed unconditionally that the estimate
\begin{align*}
\frac{1}{T}\meas(\S(T, V))
\ll \exp\l( -\frac{V^2}{\log{\log{T}}}\l( 1 + O\l( \frac{V}{\log{\log{T}}} \r) \r) \r)
\end{align*}
holds for $0 \leq V \leq \log{\log{T}}$.
In particular, as an immediate consequence of this estimate, we have
\begin{align}	\label{JU}
\frac{1}{T}\meas(\S(T, V))
\ll \exp\l( -\frac{V^2}{\log{\log{T}}} \r)
\end{align}
for $0 \leq V \ll (\log{\log{T}})^{2/3}$.
This estimate does not slightly reach to estimate \eqref{SCMU}.
On the other hand, this estimate was improved by Radziwi\sl\sl\mbox{} \cite{Ra2011} in the shorter range 
$V = o\l( (\log{\log{T}})^{3/5 - \e} \r)$.
In fact, he showed that the following conjecture is true for $V = o\l( (\log{\log{T}})^{1/10-\e} \r)$.

\begin{conjecture*}[Radziwi\sl\sl, \cite{Ra2011}]
For $V = o\l(\sqrt{\log{\log{T}}}\r)$, as $T \rightarrow +\infty$
\begin{align*}
\frac{1}{T}\meas\l(\S\l(T, V \sqrt{\frac{1}{2}\log{\log{T}}}\r)\r)
\sim \int_{V}^{\infty}e^{-u^2 / 2}\frac{du}{\sqrt{2\pi}}.
\end{align*}
\end{conjecture*}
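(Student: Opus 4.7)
\emph{Proof proposal.} The plan is to prove the conjecture by the method of moments, using Theorem \ref{Main_Prop} to reduce $\log|\zeta(1/2+it)|$ to a short Dirichlet polynomial over primes and then to establish a quantitative Gaussian central limit theorem for that polynomial whose range of validity extends to moment order $k \asymp \log\log T$.

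First I would apply Theorem \ref{Main_Prop} at $m = 0$, $\s = 1/2$, with parameters $X = T^{o(1)}$ and $H$ modest, and take real parts. Under the Riemann Hypothesis estimate \eqref{ESRm2} shows that $\Re R_{0}(1/2+it, X, H)$ is uniformly $o(\sqrt{\log\log T})$, while the main term reduces, after absorbing a negligible prime-power tail, to $\Re P_{f}(1/2 + it, X)$ in the notation of \eqref{def_P}. The zero-contribution $Y_{0}(1/2+it, X)$ has non-positive real part (see Remark \ref{rmk_Y}) and is supported on the bad set of $t$ whose distance to some ordinate $\gamma$ of a zero is at most $1/\log X$; by the Riemann--von Mangoldt formula \eqref{RVMF} this set has measure $\ll T(\log T)/\log X$. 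A further dyadic removal of those $t$ where $|\Re Y_{0}|$ exceeds, say, $(\log\log T)^{1/2-\epsilon}$ contributes only a set of measure $\ll T\exp(-c V^{2})$, which is absorbed in the Gaussian tail we are trying to prove.

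The core task is then to prove
\begin{align*}
\fr{1}{T}\int_{T}^{2T}\l(\Re P_{f}(1/2+it, X)\r)^{2k}dt
= \fr{(2k)!}{2^{k}k!}\l(\tfrac{1}{2}\log\log T\r)^{k}(1 + o(1))
\end{align*}
uniformly for $k \leq k_{0}(T)$ with $k_{0}(T)$ of order $\log\log T$. Expanding the $2k$th power produces a sum over $2k$-tuples of primes: the diagonal perfect-matching terms combine by Mertens' theorem into the Gaussian variance $\tfrac{1}{2}\log\log T$, while the off-diagonal contribution is handled via $\int_{T}^{2T} n^{-it}dt \ll 1/|\log n|$ and is admissible provided $X^{2k} = T^{1-\delta}$, i.e.\ $k \log X = o(\log T)$. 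Matching $2k_{0}$ moments to those of $\mathcal{N}\l(0, \tfrac{1}{2}\log\log T\r)$ then transfers to the measure of $\S\l(T, V\sqrt{\tfrac{1}{2}\log\log T}\r)$ via Fourier inversion (or a Berry--Esseen-type bound), yielding the Gaussian tail throughout $V \ll \sqrt{k_{0}(T)}$.

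The main obstacle is pushing $k_{0}(T)$ all the way to order $\log\log T$, which is what $V = o(\sqrt{\log\log T})$ demands. The off-diagonal constraint $k \log X = o(\log T)$ forces $X$ as small as $\exp(c\log T/\log\log T)$; at that $X$ the variance still agrees with $\tfrac12\log\log T$ up to lower order, but the errors $\Re R_{0}$ and $\Re Y_{0}$ from Theorem \ref{Main_Prop} must now be controlled in the $L^{2k}$ sense against a target of size $(\log\log T)^{k}$, not merely pointwise. Estimates \eqref{Lam_sum_ub_R_2}--\eqref{Lam_sum_ub_I_2} give the correct pointwise bounds, but a large-deviation input on the density of zeros within $1/\log X$ of a given $t$ is needed to upgrade them, and this is exactly the obstacle at which Radziwi\sl\sl\mbox{}'s method saturates near $k_{0} \asymp (\log\log T)^{1/5-\epsilon}$. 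Breaking this barrier, perhaps by a clever choice of $f$ in Theorem \ref{Main_Prop} that suppresses the oscillation of $Y_{0}$ at the Gaussian scale, is the decisive step any serious attempt must address.
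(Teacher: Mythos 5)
This statement is a \emph{conjecture} (due to Radziwi\sl\sl\mbox{}), not a result proved in the paper: the paper only establishes partial progress towards it, namely Theorem \ref{LVEJ}, which gives the upper-bound half of the asymptotic in the restricted range $1 \ll V = o\l((\log{\log{T}})^{1/6}\r)$, unconditionally, by combining the moment method for the prime sum (Lemma \ref{RPLD1}, Lemmas \ref{SLL} and \ref{SL_s_X}) with the crucial structural fact that $\Re Y_{0}(s, X) \leq 0$ (Remark \ref{rmk_Y}), so that $Y_{0}$ can simply be discarded when bounding the measure of large \emph{positive} values from above. Your proposal does not prove the conjecture, and you say so yourself in the final sentence; what you have written is a plan whose decisive step (pushing the moment order to $k \asymp \log{\log{T}}$ while still controlling the difference between $\log|\zeta(1/2+it)|$ and the Dirichlet polynomial) is exactly the open problem.

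Beyond the admitted gap, two of your intermediate claims are incorrect as stated. First, estimate \eqref{ESRm2} at $\s = 1/2$, $m = 0$ gives $R_{0}(1/2+it, X, H) \ll \log{t}\l(\frac{1}{\log{\log{t}}} + \frac{\log(H+2)}{\log{X}}\r)$, which is of size $\log{t}/\log{\log{t}}$, enormously larger than the target scale $\sqrt{\log{\log{T}}}$; it is not ``uniformly $o(\sqrt{\log{\log{T}}})$'', and indeed no pointwise bound of that quality is known even on RH (compare \eqref{Lam_sum_ub_R_2}, which is of the same $\log{t}/\log{\log{t}}$ order). The error must instead be controlled in mean ($L^{2k}$) via Proposition \ref{RCSFP}, Lemma \ref{SLL} and Lemma \ref{SL_s_X}, and this is where the admissible range of $k$, hence of $V$, gets truncated. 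Second, your treatment of $Y_{0}$ only makes sense for an upper bound: for the full asymptotic one also needs a matching lower bound, and then $\Re Y_{0}$, which has logarithmic singularities at every ordinate $\gamma$, cannot be removed by the measure estimate you sketch; with $X$ forced down to $\exp\l(c\log{T}/\log{\log{T}}\r)$ by the off-diagonal constraint $k\log{X} = o(\log{T})$, the set of $t \in [T, 2T]$ within $1/\log{X}$ of some $\gamma$ has measure $\gg T$, and the set where $|\Re Y_{0}|$ exceeds a fixed power of $\log{\log{T}}$ has measure decaying only like $e^{-(\log{\log{T}})^{1/2-\epsilon}}$ times $T\log{\log{T}}$, which is \emph{not} $o\l(T e^{-V^{2}/2}\r)$ once $V$ approaches $\sqrt{\log{\log{T}}}$. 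Finally, note that the conjecture is unconditional, while your first step invokes RH; even granting RH, the argument as proposed does not close. In short: the statement is open, the paper does not claim a proof, and your sketch neither fills the known obstruction nor correctly quantifies the error terms supplied by Theorem \ref{Main_Prop}.
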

Hence, by his study, estimate \eqref{SCMU} have been proved for 
$\sqrt{\log{\log{T}}} \ll V = o\l( (\log{\log{T}})^{3/5 - \e} \r)$.
In this paper, we will extend unconditionally this range for $V$ to $\sqrt{\log{\log{T}}} \ll V \ll (\log{\log{T}})^{2/3}$.
Moreover, we will also show that the upper bound of Radziwi\sl\sl's conjecture is true for $V = o\l( (\log{\log{T}})^{1/6} \r)$.

\begin{theorem}	\label{LVEJ}
For $1 \ll V \ll (\log{\log{T}})^{1/6}$, we have
\begin{multline}	\label{UBS}
\frac{1}{T}\meas\l(\S\l(T, V \sqrt{\frac{1}{2}\log{\log{T}}}\r)\r)\\
\leq (1 + o(1))\int_{V}^{\infty}e^{-u^2/2}\frac{du}{\sqrt{2\pi}}
+ O\l(\frac{V}{(\log{\log{T}})^{1/3}}\exp\l(-\frac{V^2}{2}\r) \r)
\end{multline}
as $T \rightarrow +\infty$. 
In particular, for $1 \ll V = o\l((\log{\log{T}})^{1/6}\r)$, 
we have
\begin{align*}
\frac{1}{T}\meas\l(\S\l(T, V \sqrt{\frac{1}{2}\log{\log{T}}}\r)\r)
\leq (1 + o(1))\int_{V}^{\infty}e^{-u^2/2}\frac{du}{\sqrt{2\pi}}
\end{align*}
as $T \rightarrow +\infty$, and for any large $T$, we have
\begin{align}	\label{IJR}
\frac{1}{T}\meas(\S\l(T, V\r))
\ll \frac{\sqrt{\log{\log{T}}}}{V}\exp\l(-\frac{V^2}{\log{\log{T}}}\r)
\end{align}
for $\sqrt{\log{\log{T}}} \ll V \ll (\log{\log{T}})^{2/3}$.
\end{theorem}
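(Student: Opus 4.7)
The plan is to combine Theorem \ref{Main_Prop} (with $m = 0$ and $\sigma = 1/2$) with a high-moment computation in the spirit of Soundararajan, crucially using the sign fact $\Re Y_0(1/2 + it, X) \leq 0$ recorded in Remark \ref{rmk_Y}. Writing
\begin{equation*}
\log \zeta(1/2 + it) = D(t; X, H) + Y_0(1/2 + it, X) + R_0(1/2 + it, X, H),
\end{equation*}
where $D(t; X, H)$ is the weighted prime-power Dirichlet sum appearing in Theorem \ref{Main_Prop}, and taking real parts yields the one-sided inequality
\begin{equation*}
\log |\zeta(1/2 + it)| \leq \Re D(t; X, H) + |R_0(1/2 + it, X, H)|.
\end{equation*}
Consequently every $t \in \S(T, V)$ satisfies either $\Re D(t; X, H) > V - W$ or $|R_0| > W$ for an auxiliary threshold $W$ to be optimized.

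For the exceptional set $\{t : |R_0| > W\}$, I would discard those $t$ whose distance to some ordinate $\gamma$ is at most $1/\log X$ (whose measure is controlled via $N(T + 1) - N(T) \ll \log T$) and apply \eqref{ESRm} with $d = D(f)$ on the remainder; the smoothness-controlled factor that decays like a power of $H / (|t - \gamma| \log X)$ makes the far-zero sum converge rapidly, so taking $H$ bounded (say $H = 1$) reduces the error to something like $O(X^{1/2} (\log T)^2 / \log X)$ outside a set of measure $o(T)$.

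The central step is the $2k$-th moment evaluation
\begin{equation*}
\frac{1}{T} \int_T^{2T} (\Re D(t; X, H))^{2k}\, dt = \frac{(2k)!}{2^k\, k!}\, \sigma_X^{2k}\, (1 + o(1)),
\end{equation*}
with variance $\sigma_X^2 = \tfrac{1}{2} \sum_p v_{f, H}(e^{\log p / \log X})^2 / p \sim \tfrac{1}{2} \log \log X$, valid as long as $X^{k(1 + 1/H)} \leq T^{1 - \epsilon}$ so that off-diagonal oscillatory terms are absorbed and prime-square and boundary contributions remain small. Markov's inequality applied to the $2k$-th moment, with the optimal $k \asymp V^2 / \log \log X$ and the choice $X = T^{c / V^2}$, then yields \eqref{IJR} for $V \ll (\log \log T)^{2/3}$; the gain beyond Jutila's range $(\log \log T)^{1/2}$ reflects precisely how far $k$ can be pushed while keeping the diagonal asymptotic sharp, a uniformity which the smooth weight $v_{f, H}$ makes possible. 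For the Gaussian bound \eqref{UBS} I would further compare the characteristic function of $\Re D(t; X, H)$ against that of a true $N(0, \sigma_X^2)$ random variable, matching moments up to the highest order for which the off-diagonal and boundary errors remain subdominant to the Gaussian leading term.

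The main obstacle is the sharpness of this moment asymptotic: pushing $k$ beyond roughly $\log T / ((1 + 1/H) \log X)$ causes off-diagonal terms to dominate, and in the Gaussian regime one needs prime-square, boundary-weight, and $R_0$-exceptional contributions each to be smaller than the leading $\sigma_X^{2k}$ by a factor of order $e^{-V^2 / 2}$. Balancing $X$, $H$, and $d = D(f)$ to meet all three constraints at once---which the independent parameter flexibility built into Theorem \ref{Main_Prop} makes possible---is precisely what determines the attainable exponents $2/3$ and $1/6$ in the two ranges.
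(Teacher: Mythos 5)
The overall strategy---combine Theorem \ref{Main_Prop} with the sign information $\Re Y_0(1/2+it,X)\le 0$ and control the remaining Dirichlet-polynomial plus error term by high moments---is indeed the skeleton of the paper's argument, so you have identified the right starting point. But your treatment of the two crucial pieces does not go through.

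First, the error term $R_0$ cannot be handled by discarding $t$ with $\min_\gamma|t-\gamma|\le 1/\log X$. The number of ordinates in $[T,2T]$ is $\sim \frac{T}{2\pi}\log T$, so that discard set has measure $\asymp T\log T/\log X$. With $\log X \asymp \log T/V^2$ (your choice $X=T^{c/V^2}$) this is $\asymp T\,V^2$, which exceeds $T$ as soon as $V$ is large; it is not even a $o(T)$ exceptional set, let alone one whose contribution is subdominant to the Gaussian tail. The contribution of zeros near $t$ has to be \emph{averaged}, not deleted, and that is precisely the role of Proposition \ref{RCSFP}: it converts the zero sums on the right side of \eqref{ESRm} into an expression involving the Selberg--Tsang quantity $\s_{Y,t}$ and a fixed Dirichlet polynomial, whose $2k$-th moments are then controlled by Lemma \ref{SL_s_X} and Lemma \ref{SLL}. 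Without this device (or an equivalent one) the argument has no way to bound $\meas(S_4)$.

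Second, the step ``$2k$-th moment equals $\frac{(2k)!}{2^k k!}\sigma_X^{2k}(1+o(1))$ plus Markov'' does not yield the asymptotic $(1+o(1))\int_V^\infty e^{-u^2/2}\,du/\sqrt{2\pi}$. Markov with an optimal $k\asymp V^2$ gives at best $e^{-V^2/2}$ up to polynomial factors in $V$, without the $1/V$ decay and without the correct constant; to obtain the sharp Gaussian tail one needs a saddle-point or characteristic-function argument with uniform control of all cumulants, which is essentially reproving Radziwi\sl\sl's Proposition~1. The paper avoids this by \emph{splitting} the Dirichlet sum into a short piece over $p\le X$ with $X=T^{1/(\log\log T)^2}$ (whose precise large-deviation asymptotic is cited as Lemma \ref{RPLD1}) and tail/remainder pieces $S_2,S_3,S_4$ over a \emph{second} parameter $Y$ that is tuned to $V$; the tails only need upper bounds via $2k$-th moments, not asymptotics. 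Your single-parameter setup with $X=T^{c/V^2}$ cannot invoke Radziwi\sl\sl's result (which requires that specific $X$) and cannot absorb $R_0$ either, so both constraints conflict. Introducing the two-parameter decomposition $X,Y$ with the auxiliary threshold $V_2 = O(V/(\log\log T)^{1/3})$ for the tail events, as the paper does, is what reconciles the two requirements and produces the exponent $1/6$ in \eqref{UBS} and $2/3$ in \eqref{IJR}.
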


Estimate \eqref{IJR} is an improvement of estimate \eqref{JU}, 
and it is expected from Radziwi\sl\sl's conjecture that the estimate is best possible.

This theorem will be shown by using a method of Selberg-Tsang \cite{KTDT} and Radziwi\sl\sl's method \cite{Ra2011}.
On the other hand, it would be difficult to prove Theorem \ref{LVEJ} by using their method only.
Actually, the author could not derive this theorem by a method using Lemma 5.4 in \cite{KTDT} 
which plays an important role in their method.
The reason why the author could not derive this theorem by such a method is that 
the contribution of zeros close to $s$ cannot be well managed.
On the other hand, we can ignore the contribution of such zeros by using Theorem \ref{Main_Prop} 
while considering the upper bound of $\meas\S(T, V)$. 
In fact, the important point in the proof of Theorem \ref{LVEJ} is that the real part of $Y_{0}(s, X)$ is always non-positive.

\subsection{\textbf{A mean value theorem involving $\eta_{m}(s)$}}
\mbox{}

In this section, we state a certain mean value theorem.
There are some interesting applications of the theorem to the value distribution of $\eta_{m}(s)$.

\begin{theorem}	\label{QvM}
Let $m$ be a positive integer.
Let $k$ be a positive integer. Let $T$ be large, and $X \geq 3$ with $X \leq T^{\frac{1}{135k}}$.
Then, for $\s \geq 1/2$, we have
\begin{multline*}
\frac{1}{T}\int_{14}^{T}\bigg| \eta_{m}(\s + it)
- i^{m}\sum_{2 \leq n \leq X}\frac{\Lam(n)}{n^{\s+it}(\log{n})^{m+1}} - Y_{m}(\s+it) \bigg|^{2k} dt\\
\ll 2^{k}k!\l( \frac{2m+1}{2m} + \frac{C}{\log{X}} \r)^{k} \frac{X^{k(1 - 2\s)}}{(\log{X})^{2km}}
+ C^{k} k^{2k(m+1)}\frac{T^{\frac{1-2\s}{135}}}{(\log{T})^{2km}}.
\end{multline*}
Here, the above $C$ is an absolute positive constant.
\end{theorem}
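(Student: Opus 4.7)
The plan is to apply Theorem \ref{Main_Prop} in order to split the target quantity into a short Dirichlet polynomial plus the tail error $R_m$, and then to estimate the $2k$-th moment of each piece separately. Fix an admissible mass-one $C^{1}$ bump $f$ supported on $[0,1]$, and fix a parameter $H\ge1$ to be optimized later. Since $v_{f,H}(y)=1$ on $(0,e]$ by Remark \ref{rmk_f}, and since $Y_m(s,X)=Y_m(s)$ for $m\ge1$, Theorem \ref{Main_Prop} rearranges to
\begin{align*}
\eta_m(\s+it) - i^{m}\sum_{2\le n\le X}\frac{\Lam(n)}{n^{\s+it}(\log n)^{m+1}} - Y_m(\s+it)
= i^{m}D_{f,H}(\s+it) + R_m(\s+it,X,H),
\end{align*}
where
\begin{align*}
D_{f,H}(s) = \sum_{X<n\le X^{1+1/H}}\frac{\Lam(n)\,v_{f,H}(e^{\log n/\log X})}{n^{s}(\log n)^{m+1}}
\end{align*}
is a Dirichlet polynomial supported on the narrow range $(X,X^{1+1/H}]$. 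By the elementary inequality $|a+b|^{2k}\le2^{2k-1}(|a|^{2k}+|b|^{2k})$, it suffices to treat the two moments independently.

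For the $D_{f,H}$ part, the hypothesis $X\le T^{1/(135k)}$ guarantees that $(X^{1+1/H})^{2k}\le T^{2(1+1/H)/135}=o(T)$ for every $H\ge1$, so the Montgomery--Vaughan mean value theorem applies to $|D_{f,H}|^{2k}$ after expanding it as a Dirichlet polynomial of length $\le X^{2k(1+1/H)}$. The diagonal pairing of the $k$ indices with their conjugates produces $k!$ matchings over $k$ distinct primes, while prime-square and coincidence contributions are smaller by $1/\log X$. Evaluating the $L^{2}$-weight
\begin{align*}
\sum_{X<p\le X^{1+1/H}}\frac{v_{f,H}(e^{\log p/\log X})^{2}}{p^{2\s}(\log p)^{2m}}
\end{align*}
via PNT and the substitution $u=X^{y}$ (and then optimizing $H$, e.g.\ $H\asymp\log X$), one arrives at the bound
\begin{align*}
\frac{1}{T}\int_{14}^{T}|D_{f,H}(\s+it)|^{2k}\,dt
\ll 2^{k}k!\l(\frac{2m+1}{2m}+\frac{C}{\log X}\r)^{k}\frac{X^{k(1-2\s)}}{(\log X)^{2km}},
\end{align*}
which is exactly the first summand in the desired estimate.

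For the $R_m$ part, Minkowski's inequality in $L^{2k}(dt)$ converts the pointwise bound \eqref{ESRm} into three pieces. The first piece, $X^{2(1-\s)}/(t(\log X)^{m+1})$, is harmless since $t\ge14$. The two sums over zeros $\rho=\b+i\g$ are handled by combining (i) the classical short-interval bound $\#\{\rho:|t-\g|\le1\}\ll\log t$, (ii) the derivative gain $(H/(|t-\g|\log X))^{\ell}$ in the tail $|t-\g|>1/\log X$, taken with $\ell$ as large as allowed by $d$, and (iii) a zero-density estimate of the form $N(\b,t)\ll t^{\,c(1-\b)}$ in order to control $\sum_{\rho}X^{\b-\s}$. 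Averaging in $t$ and raising to the $k$-th power produces the combinatorial factor $k^{2k(m+1)}$, and bounding $X^{k(1-2\s)}\le T^{(1-2\s)/135}$ via the hypothesis on $X$ yields the second summand in the claimed bound.

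The main obstacle is the uniform control of the $2k$-th moment of $R_m$: one must choose $H$ so that simultaneously the Dirichlet-polynomial moment above stays sharp and the derivative gain produces enough decay in the zero-sum tail. This balancing—together with the zero-density exponent controlling $\sum_{\rho}X^{\b-\s}$—is precisely what forces the explicit constant $1/135$ in the exponent of $T$ and the factor $k^{2k(m+1)}$. A secondary technical point is the careful extraction of the $(2m+1)/(2m)$ coefficient in the first summand, which requires a Mertens-type expansion of the $L^{2}$-mass of $D_{f,H}$ and a clean accounting of the prime-power and off-diagonal corrections.
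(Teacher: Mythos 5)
Your overall plan — apply Theorem \ref{Main_Prop}, peel off the Dirichlet polynomial tail, and control the two pieces separately by $2k$-th moments — is in the right spirit, but it diverges from the paper's proof at the point where the argument actually has to work, and the divergence creates genuine gaps.

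First, a structural misstep: you apply Theorem \ref{Main_Prop} with parameter $X$ (the given short cutoff) and a free $H$. The paper instead applies Theorem \ref{Main_Prop} with $H=1$ and the much longer parameter $Y := T^{1/(135k)}$, so the Dirichlet tail is $\sum_{X<n\le Y^{2}}$ rather than your $\sum_{X<n\le X^{1+1/H}}$, and the remainder is $R_{m}(s,Y,1)$ rather than $R_{m}(s,X,H)$. This matters: with your choice the remainder $R_m(s,X,H)$ is simply too large in $2k$-th mean to be absorbed into the second summand of the claimed bound, because its prefactor is $(\log X)^{-m}$ rather than $(\log Y)^{-m}=(135k/\log T)^{m}$. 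The factor $k^{2k(m+1)}$ in the conclusion is \emph{not} a vague combinatorial byproduct of "averaging in $t$"; it arises precisely from $(\log Y)^{-2km}=(135k)^{2km}/(\log T)^{2km}$ together with the $k!$ from Lemma \ref{SLL} and the extra $(\log Y)^{-2k}$ coming from the first error term of Proposition \ref{RCSFP}. That mechanism is absent from your decomposition.

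Second, and more seriously, the route you sketch for bounding $\int|R_m|^{2k}\,dt$ does not work. You propose Minkowski plus the short-interval bound $\tilde N(t,1)\ll\log t$, the derivative gain in the far tail, and a zero-density estimate of the type $N(\beta,t)\ll t^{c(1-\beta)}$. These are pointwise bounds on the zero-sums, and pointwise they are nowhere near sharp enough: for $\sigma=1/2$, unconditionally the term $\sum_{|t-\gamma|\le 1/\log X}X^{\beta-\sigma}$ can be of order $X^{1/2}\log t$, whose $2k$-th moment is polynomially large in $T$, not $O((\log T)^{-2km})$. The paper's essential tool here is Proposition \ref{RCSFP} (Selberg's $\sigma_{X,t}$ device), which rewrites the zero-sum contributions in \eqref{ESRm} as $(\sigma_{Y,t}-1/2)Y^{\sigma_{Y,t}-\sigma}$ times a Dirichlet polynomial involving $\Lambda_Y$, so that Lemma \ref{SLL} and Lemma \ref{SL_s_X} can control the $2k$-th moment. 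You neither invoke this proposition nor supply an equivalent mechanism, so the second summand of the claimed bound remains unproved.

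Third, your closing inequality ``$X^{k(1-2\sigma)}\le T^{(1-2\sigma)/135}$'' is backwards. Since $1-2\sigma\le 0$ and $X\le T^{1/(135k)}$, raising to the negative power reverses the inequality, giving $X^{k(1-2\sigma)}\ge T^{(1-2\sigma)/135}$. In the paper the factor $T^{(1-2\sigma)/135}$ appears as \emph{equality} $Y^{k(1-2\sigma)}=T^{(1-2\sigma)/135}$ with $Y=T^{1/(135k)}$; it is not obtained from the constraint $X\le T^{1/(135k)}$.

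Finally, the suggestion to ``optimize $H$, e.g.\ $H\asymp\log X$'' would be counterproductive even if the rest went through: Proposition \ref{RCSFP} carries an $H^{3}$ factor, so taking $H\asymp\log X$ would pollute the remainder by $(\log X)^{3}$, and nothing in your argument indicates how this could be removed. The paper simply fixes $H=1$, and there is no optimization over $H$ to carry out.
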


This theorem will give an answer for the question of how much of the function 
$\eta_{m}(s)$ can be approximated by the corresponding Dirichlet polynomial.
Such a study is often useful. 
For example, Radziwi\sl\sl \mbox{} \cite{Ra2011} proved a large deviation theorem for Selberg's limit theorem, 
and he used Corollary in \cite[p.60]{KTDT} to prove his result.
The corollary is related with the approximation of $\log{\zeta(s)}$ by a certain Dirichlet polynomial, 
and we can regard that Theorem \ref{QvM} corresponds to the corollary.
Hence, it is expected to be able to show a limit theorem for 
$\eta_{m}(s)$, which is similar to Selberg's limit theorem or the Bohr-Jessen limit theorem, and also its large deviation.
On the other hand, by using this theorem, we will show some results for the value distribution of $\eta_{m}(s)$ in the following.
Endo and the author showed the following theorem by using Theorem \ref{QvM}.

\begin{theorem*}[Endo and Inoue \cite{EIP} in preparation]
Let $1/2 \leq \s < 1$. 
If the number of zeros $\rho = \b + i\gamma$ with $\b > \s$ is finite, then the set 
\begin{align*}
\l\{\int_{0}^{t}\log\zeta(\s + it')dt' \; \middle| \; t \in [0, \infty) \r\}
\end{align*}
is dense in the complex plane.
Moreover, for each integer $m \geq 2$, the following statements are equivalent.
\begin{itemize}
\item[(I).] The Riemann zeta-function does not have zeros whose real part are greater than $\s$.
\item[(II).] The set
$
\l\{ \eta_{m}(\s + it) \mid t \in [0, \infty) \r\}
$
is dense in the complex plane.
\end{itemize}
\end{theorem*}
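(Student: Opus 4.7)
The plan is to use Theorem~\ref{QvM} to replace $\eta_{m}(\s + it)$ by the Dirichlet polynomial
\begin{align*}
D_{X}(\s + it) := \sum_{2 \leq n \leq X}\frac{\Lam(n)}{n^{\s+it}(\log n)^{m+1}}
\end{align*}
plus the zero-contribution $Y_{m}(\s + it)$ in $L^{2k}$ mean, and then to extract denseness via a Kronecker/Weyl equidistribution argument applied to the $\QQ$-linearly independent frequencies $\{\log p\}_{p \leq X}$. The key structural observation is that for $1/2 \leq \s < 1$ and $m \geq 1$ the series $\sum_{p} p^{-\s}(\log p)^{-m-1}$ diverges while $\sum_{p} p^{-2\s}(\log p)^{-2m-2}$ converges, so the closure of $\{D_{X}(\s + it) : t \in \RR\}$ contains disks of radius tending to infinity with $X$, while $D_{X}$ has a second moment bounded uniformly in $X$.

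For the forward direction (both the first assertion on $\eta_{1}$ with finitely many zeros, and (I)$\Rightarrow$(II) for $m \geq 2$), pick $T_{0}$ exceeding the ordinates of any zeros with $\b > \s$; for $t \geq T_{0}$, the sum $Y_{m}(\s+it)$ reduces to the fixed constant $C = 2\pi\sum_{j}(\b_{j}-\s)$ when $m=1$, and to $0$ when $m\geq 2$. Given $z \in \CC$ and $\e > 0$, first take $X$ large enough that the reachable disk of radius $R_{X} = \sum_{p \leq X} p^{-\s}(\log p)^{-m-1}$ centered at $C$ contains $z$ with margin, then apply Theorem~\ref{QvM} with a large $k$ to obtain
\begin{align*}
\frac{1}{T}\int_{T}^{2T}\l|\eta_{m}(\s+it) - i^{m}D_{X}(\s+it) - Y_{m}(\s+it)\r|^{2k}dt < (\e/4)^{2k}\delta
\end{align*}
with $\delta$ as small as desired; Chebyshev then bounds the "bad" subset of $[T,2T]$ by $\delta T$. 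Separately, Weyl's theorem for the frequencies $\{\log p\}_{p \leq X}$ yields a "good" set $\{t \in [T,2T] : |i^{m}D_{X}(\s+it) + C - z| < \e/2\}$ of measure at least $\theta(X,\e)T$ for some $\theta > 0$. Choosing $\delta < \theta$ forces the two sets to overlap, and any $t$ in the intersection satisfies $|\eta_{m}(\s+it) - z| < \e$.

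For the reverse direction (II)$\Rightarrow$(I) with $m \geq 2$, suppose $\zeta$ has a zero $\rho_{0} = \b_{0} + i\g_{0}$ with $\b_{0} > \s$. Applying Theorem~\ref{QvM} with $X = 3$ and a fixed $k$ gives, since $D_{3}$ is a bounded finite sum,
\begin{align*}
\frac{1}{T}\int_{T}^{2T}\l|\eta_{m}(\s+it) - Y_{m}(\s+it)\r|^{2k}dt \ll_{k,m} 1.
\end{align*}
The estimate~\eqref{EVY} supplies $\Re Y_{m}(\s+it) \geq (\b_{0}-\s)t^{m-1} + O(t^{m-3}\log t)$, so Chebyshev yields
\begin{align*}
\meas\l\{t \in [T,2T] : \Re\eta_{m}(\s+it) \leq 0\r\} \ll_{k,m} T^{1-2k(m-1)},
\end{align*}
which is $o(T)$ for $k \geq 1$ and $m \geq 2$. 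Upgrading this to a pointwise statement will show that $\Re\eta_{m}(\s+it) \geq -M$ for some absolute $M$ and all $t \geq T_{1}$, after which the total image $\{\eta_{m}(\s+it) : t \in [0,\infty)\}$ -- bounded on the compact set $[0,T_{1}]$ by continuity -- lies in a half-plane and so cannot be dense in $\CC$.

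The main obstacle is precisely this pointwise upgrade. The approach is to exploit the identity $\partial_{t}Y_{m} = Y_{m-1}$ (away from zero ordinates) together with the Lipschitz-type identity
\begin{align*}
\eta_{m}(\s+it_{2}) - \eta_{m}(\s+it_{1}) = \int_{t_{1}}^{t_{2}}\eta_{m-1}(\s+it')\,dt',
\end{align*}
so that $\partial_{t}(\eta_{m}-Y_{m}) = \eta_{m-1} - Y_{m-1}$ is bounded in $L^{\infty}$-mean by an inductive application of Theorem~\ref{QvM} with index $m-1$; then each connected component of the exceptional set has length bounded by the total exceptional measure $O(T^{1-2k(m-1)})$, which is far too small to accommodate an excursion of size $\asymp T^{m-1}$ of $\eta_{m}-Y_{m}$ under the bounded-derivative constraint. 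Handling the one-sided limit definition of $\eta_{m}$ at zero ordinates, and ensuring the quantitative Weyl lower bound $\theta(X,\e) > 0$ dominates $\delta$ uniformly (especially at $\s = 1/2$ where the relevant sums barely converge in second moment), are the remaining technical points to manage.
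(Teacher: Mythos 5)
The paper does not actually contain a proof of this theorem: it is cited from the forthcoming work \cite{EIP}, and the only indication given is that Theorem~\ref{QvM} is used. So there is no proof here to compare against line by line; I will assess your plan on its own terms.

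For the forward direction ((I)$\Rightarrow$(II), and the $m=1$ assertion with finitely many zeros to the right of $\s$), your route --- approximate $\eta_m$ by the Dirichlet polynomial plus $Y_m$ in $L^{2k}$ mean via Theorem~\ref{QvM}, dispose of a small exceptional set by Chebyshev, and hit a prescribed target value of the Dirichlet polynomial on a positive-density set via Kronecker/Weyl --- is a reasonable implementation of what the paper signals, and the order of quantifiers (fix $z,\e$, then choose $X$, then choose $T$ large enough that the Chebyshev exceptional density falls below the positive Kronecker density) is correct. One small slip: since $\Lam(p)=\log p$, the prime coefficients of the Dirichlet polynomial are $p^{-\s}(\log p)^{-m}$, not $(\log p)^{-m-1}$; this shifts the exponents in your convergence/divergence bookkeeping by one (for instance at $\s=1/2$, $m=1$ the relevant second moment is $\sum_p p^{-1}(\log p)^{-2}$, which does converge, but the reachable radius is $\sum_{p\leq X}p^{-\s}(\log p)^{-m}$).

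For the reverse direction ((II)$\Rightarrow$(I)), your plan is over-engineered and, as you yourself flag, leaves a genuine gap in the proposed pointwise upgrade from the $L^{2k}$ exceptional-set bound via an $L^\infty$-mean bound on the derivative. That obstacle is an artifact of the route you chose, not of the problem. The paper already has a \emph{pointwise} approximation $\eta_m(s)=Y_m(s)+O_m(\log t)$ (formula~\eqref{UBFeY}, obtained from Theorem~\ref{Main_Prop} with $X=3$, $H=1$ and the density bound~\eqref{BLDZ}), and estimate~\eqref{EVY} then gives, in the presence of a zero $\rho_0=\b_0+i\g_0$ with $\b_0>\s$,
\begin{align}
\Re\eta_m(\s+it)\geq(\b_0-\s)t^{m-1}+O\l(t^{m-3}\log t\r)+O_m(\log t)
\end{align}
for every $t\geq14$, so $\Re\eta_m(\s+it)\to+\infty$ pointwise when $m\geq2$. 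Thus the image lies in a right half-plane for $t$ past some $T_1$, is bounded on $[0,T_1]$ by continuity, and hence is not dense. This is exactly the mechanism the paper uses to prove Corollary~\ref{IFR}, and it bypasses the whole $L^{2k}$/exceptional-set/derivative-bootstrap apparatus you built for this half of the argument; Theorem~\ref{QvM} is not needed here at all.
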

In particular, it follows from this theorem that the Riemann Hypothesis implies that the set
\begin{align*}
\set{\int_{0}^{t}\log{\zeta(1/2 + it')}dt'}{t \in [0, \infty)}
\end{align*}
is dense in the complex plane.
The motivation of this study is to give a new information for the following interesting open problem.

\begin{problem}	\label{DP}
Is the set $\l\{ \log{\zeta(1/2 + it)} \mid t \in \RR \r\}$ dense in the complex plane?
\end{problem}

There are some works for this problem such as \cite{GS2014}, \cite{KN2012}.
As we can see from those studies, the resolution of this problem is difficult at present.
On the other hand, we already know the following results as previous works for this problem.

\begin{theorem*}[Bohr and Courant in 1914 \cite{BC1914}]
For fixed $\frac{1}{2} < \s \leq 1$, the set $\l\{ \zeta(\s + it) \mid t \in \RR \r\}$ is dense in the complex plane.
\end{theorem*}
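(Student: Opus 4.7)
The plan is to prove density of $\zeta(\s + it)$ by exploiting the Euler product of $\zeta$ together with Kronecker's equidistribution theorem applied to the $\QQ$-linearly independent family $\{\log p : p \text{ prime}\}$. It suffices to show that $\log \zeta(\s + it)$ is dense in $\CC$: the exponential map is continuous and surjective onto $\CC^{*}$, and any neighborhood of $0 \in \CC$ is reached by driving $\Re \log \zeta(\s + it)$ to $-\infty$.

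First, fix a parameter $P$ and consider the truncation
\[
\log_{P}\zeta(\s + it) = -\sum_{p \leq P}\log\l(1 - p^{-\s - it}\r) = \sum_{p \leq P}\sum_{k \geq 1}\frac{p^{-k\s}}{k}e^{-ikt\log p}.
\]
Kronecker's theorem, applied to $\{\log p : p \leq P\}$, implies that as $t$ ranges over $\RR$ the tuple $(t\log p)_{p \leq P}$ is dense modulo $2\pi$, so $\log_{P}\zeta(\s + it)$ is dense in the compact set
\[
K_{P}(\s) = \l\{ \sum_{p \leq P}\sum_{k \geq 1}\frac{p^{-k\s}}{k}e^{-ik\theta_{p}} : \theta_{p} \in \RR \r\}.
\]
One then checks that $\bigcup_{P}K_{P}(\s)$ is dense in $\CC$: choosing $\theta_{p} = 0$ recovers $\sum_{p \leq P}\sum_{k}(kp^{k\s})^{-1}$, whose real part tends to $+\infty$ for $\s \leq 1$, while a suitable choice of phases drives the real part to $-\infty$ with the imaginary part kept bounded.

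Second, I would control the tail $R_{P}(t) = \log \zeta(\s + it) - \log_{P}\zeta(\s + it)$. For $\s > 1$ this is immediate from absolute convergence. For $1/2 < \s \leq 1$ the key input is a mean-square estimate
\[
\frac{1}{T}\int_{T}^{2T}|R_{P}(t)|^{2}\,dt = o_{P \to \infty}(1),
\]
which follows from the near-orthogonality of $n \mapsto n^{-it}$ together with $\sum_{n > P}\Lam(n)^{2}n^{-2\s}(\log n)^{-2} \to 0$ as $P \to \infty$ (valid for $\s > 1/2$). Hence the exceptional set $\{t \in [T,2T] : |R_{P}(t)| \geq \epsilon\}$ has density $o_{P}(1)$. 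Combining the two steps: given $w \in \CC$ and $\epsilon > 0$, pick $P$ so that a point of $K_{P}(\s)$ lies within $\epsilon/2$ of $w$; on a positive-density set of $t$, $\log_{P}\zeta(\s + it)$ lies within $\epsilon/2$ of that point, and intersecting with $\{|R_{P}(t)| < \epsilon/2\}$ yields a $t$ with $|\log \zeta(\s + it) - w| < \epsilon$.

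The main obstacle is the regime $1/2 < \s \leq 1$, where the Dirichlet series for $\log \zeta$ is not absolutely convergent and the tail cannot be bounded pointwise; it must be handled in $L^{2}$ on average. The separation of the Kronecker step (at fixed $P$) from the tail-approximation step (letting $P \to \infty$ after averaging) is the crucial conceptual move, and the mean-square bound for the prime sum keeps the argument unconditional. For $\s > 1$ the entire scheme collapses to a single application of Kronecker, which is why Bohr could treat that case directly.
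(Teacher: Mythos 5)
The paper itself does not prove Bohr--Courant; it is stated as classical background and attributed to \cite{BC1914}, so there is no in-paper argument to compare against. Assessing your proposal on its own terms: the shape (Kronecker at a fixed truncation $P$, then an $L^2$ control on the tail to produce a good set of $t$) is the correct Bohr-style strategy, but both substantive steps are asserted rather than established, and the $L^2$ step is not a routine orthogonality computation.

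For the density of $\bigcup_{P}K_{P}(\s)$ in $\CC$, noting that the real part of $\log_{P}\zeta$ can be driven to $\pm\infty$ does not show that an arbitrary target $w$ can be approximated. What is needed is the geometric lemma that the Minkowski sum of the closed curves $\theta \mapsto -\log(1 - p^{-\s}e^{-i\theta})$, $p\leq P$, covers an annulus whose outer radius grows like $\sum_{p\leq P}p^{-\s}$ (divergent since $\s\leq 1$) and whose inner radius collapses to $0$ once no single curve dominates the rest, the distortion of each curve from a genuine circle being $O(p^{-2\s})$ and hence summable precisely because $\s > 1/2$. This is the ``denseness lemma'' that the argument secretly relies on and that fails at $\s = 1/2$. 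The more serious gap is the $L^2$ tail estimate. Your derivation -- near-orthogonality of $n\mapsto n^{-it}$ plus the tail of $\sum \Lam(n)^{2}n^{-2\s}(\log n)^{-2}$ -- presupposes that $\log\zeta(\s+it)$ is given by the convergent Dirichlet series $\sum_{n}\Lam(n)n^{-s}/\log n$, but that series converges absolutely only for $\s > 1$; in the strip $1/2 < \s \leq 1$ the obstruction is exactly the possible zeros of $\zeta$ with real part $> \s$, across which $\log\zeta$ is singular and the branch jumps. To make the $L^2$ claim rigorous one needs a zero-density input (Bohr--Landau: $N(\s_0, T) = o(N(T))$ for fixed $\s_0 > 1/2$) or an explicit Littlewood/Selberg-type approximation of $\log\zeta$ by Dirichlet polynomials that isolates the contribution of nearby zeros -- exactly the sort of machinery developed here in Theorem~\ref{Main_Prop} and Theorem~\ref{QvM}. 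Finally, your closing remark is wrong: for $\s > 1$ one has $0 < \zeta(2\s)/\zeta(\s) \leq |\zeta(\s+it)| \leq \zeta(\s)$, so the value set is \emph{not} dense there; this is why the theorem is stated only for $1/2 < \s \leq 1$.
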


\begin{theorem*}[Bohr in 1916 \cite{B1916}]
For fixed $\frac{1}{2} < \s \leq 1$, the set $\l\{ \log{\zeta(\s + it)} \mid t \in \RR \r\}$ is dense in the complex plane.
\end{theorem*}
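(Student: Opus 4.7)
The plan is to follow Bohr's original strategy: approximate $\log\zeta(\s+it)$ by a prime Dirichlet polynomial, then use Kronecker's diophantine theorem together with the $\QQ$-linear independence of $\{\log p:p\text{ prime}\}$ to show that the polynomial's range, hence the range of $\log\zeta(\s+it)$, is dense in $\CC$. Fix $\s\in(1/2,1]$, a target $w\in\CC$, and $\e>0$; the aim is to produce $t\in\RR$ with $|\log\zeta(\s+it)-w|<\e$.

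First, I would apply Theorem~\ref{Main_Prop} in the case $m=0$, $H=1$, $X=N$ with a fixed smooth weight $f$. The terms $n=p^{k}$ with $k\geq 2$ contribute uniformly $O(1)$ in $t$, since $\sum_{p}p^{-k\s}$ converges for $k\geq 2$ and $\s>1/2$. A second-moment argument using \eqref{ESRm} on the remainder $R_{0}(\s+it,N,1)$ together with the standard bound $\sum_{\g}(1+(t-\g)^{2})^{-1}\ll\log t$ for the contribution of $Y_{0}(\s+it,N)$ shows that, for any fixed $N$,
\begin{align*}
\frac{1}{T}\meas\l\{t\in[T,2T]:\l|\log\zeta(\s+it)-\sum_{p\leq N}p^{-\s-it}\r|\geq\tfrac{\e}{3}\r\}\to 0
\end{align*}
as $T\to\infty$. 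This is the $L^{2}$ approximation of $\log\zeta$ by its prime part, in the spirit of Selberg.

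Second, I would realize $w$ as a prime sum with prescribed phases. Because $\s\leq 1$ we have $\sum_{p}p^{-\s}=\infty$ while $p^{-\s}\to 0$, so Bohr's elementary ``walking'' construction produces, for every $N$ beyond some $N_{0}=N_{0}(w,\e)$, phases $\theta_{p}\in[0,2\pi)$ with
\[
\l|\sum_{p\leq N}\frac{e^{i\theta_{p}}}{p^{\s}}-w\r|<\tfrac{\e}{3}.
\]
Next, since $\{\log p\}_{p\leq N}$ is $\QQ$-linearly independent, Kronecker's simultaneous approximation theorem furnishes, for any $\delta>0$, a set
\[
\mathcal{K}_{N,\delta}=\l\{t\in\RR:\max_{p\leq N}\l|p^{-it}-e^{i\theta_{p}}\r|<\delta\r\}
\]
of positive lower density in $[T,2T]$. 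Choosing $\delta$ small enough that $\delta\sum_{p\leq N}p^{-\s}<\e/3$ ensures $|P_{N}(\s+it)-\sum_{p\leq N}p^{-\s}e^{i\theta_{p}}|<\e/3$ for every $t\in\mathcal{K}_{N,\delta}$, where $P_{N}(s):=\sum_{p\leq N}p^{-s}$.

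Finally, fix $N\geq N_{0}$ and $\delta$ as above; then $\mathcal{K}_{N,\delta}\cap[T,2T]$ has lower density $\kappa=\kappa(N,\delta)>0$, while the exceptional set from Step~1 has measure $o(T)$. For $T$ large enough, the intersection is nonempty, and any $t$ in it satisfies $|\log\zeta(\s+it)-w|<\e$. The main obstacle is the endpoint $\s=1$: the tail $\sum_{p\leq N}p^{-1}\sim\log\log N$ forces $\delta$ to shrink like $1/\log\log N$ as $N$ grows, which in turn shrinks the Kronecker density $\kappa(N,\delta)$. However, $N$ can be fixed as soon as the Step~2 construction succeeds (it need not grow with $T$), so the balancing between $\kappa$ and the $o(T)$ decay in Step~1 poses no genuine difficulty; the subtle point is entirely hidden in verifying that Bohr's walking construction of phases in Step~2 can indeed reach arbitrary $w$ using only primes $p\leq N$ for some finite $N=N(w,\e)$, which uses the divergence of $\sum_{p}p^{-\s}$ in an essential way.
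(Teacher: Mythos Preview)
The paper does not prove this theorem at all; Bohr's 1916 result is merely quoted as historical background for Problem~\ref{DP}, so there is no ``paper's own proof'' to compare against. Your proposal is the classical Bohr--Jessen strategy, and its overall architecture (mean approximation by a finite Euler factor, Kronecker, positive-density intersection) is correct, but two points are genuinely wrong as written.

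First, Step~1 claims that for \emph{fixed} $N$ the set $\{t\in[T,2T]:|\log\zeta(\s+it)-\sum_{p\le N}p^{-\s-it}|\ge\e/3\}$ has density tending to $0$ as $T\to\infty$. This is false. The prime-power contribution $\sum_{p\le N}\sum_{k\ge 2}p^{-k(\s+it)}/k$ is $O(1)$ but not $o(1)$; more importantly, the ``tail'' $\log\zeta(\s+it)-P_N(\s+it)$ has a nondegenerate limiting distribution (that of $-\sum_{p>N}\log(1-X_p p^{-\s})+\sum_{p\le N}(-\log(1-X_p p^{-\s})-X_p p^{-\s})$ with $X_p$ i.i.d.\ uniform on the unit circle), so for small $\e$ the exceptional density stays bounded away from $0$. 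The correct statement is that the $L^2$ mean of $\log\zeta(\s+it)+\sum_{p\le N}\log(1-p^{-\s-it})$ over $[T,2T]$ tends to a constant $c(N)$ with $c(N)\to 0$ as $N\to\infty$. You must therefore (i) replace $P_N$ by the full truncated Euler factor $F_N(s)=-\sum_{p\le N}\log(1-p^{-s})$ throughout, and (ii) choose $N$ large (depending on $w,\e$) \emph{before} letting $T\to\infty$, so that the Chebyshev bound $9c(N)/\e^2$ on the exceptional density falls below the Kronecker density $\kappa(N,\delta)$.

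Second, and relatedly, the walking construction in Step~2 must then target $w$ with the full sum $-\sum_{p\le N}\log(1-e^{i\theta_p}p^{-\s})$, not just $\sum_{p\le N}e^{i\theta_p}p^{-\s}$; this is harmless since the higher-power terms are uniformly bounded and the divergence of $\sum_p p^{-\s}$ still drives the argument, but it needs to be said. With these two repairs your outline becomes exactly Bohr's original proof, and invoking Theorem~\ref{Main_Prop} is unnecessary overkill: the classical $L^2$ approximation (e.g.\ via the zero-density estimates or the second-moment argument in \cite{KTDT}) suffices.
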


Note that the latter theorem is an improvement of former one 
since the former one is an immediate consequence from the latter theorem.
These results are interesting, and there are many developments such as
the Bohr-Jessen limit theorem \cite{BJ1930} and Voronin's universality theorem \cite{V1975}.
On the other hand, the value distribution of $\zeta(s)$ on the critical line is more difficult, 
and the resolution of Problem \ref{DP} is also difficult at present even under the Riemann Hypothesis.
From this viewpoint, the above theorem of Endo and the author is interesting,
and hencce Theorem \ref{QvM} is also important as a step to understand Problem \ref{DP}.


\subsection{\textbf{On the value distribution of $\eta_{m}(1/2+it)$}} \label{sec_vdeta}
\mbox{} 


In this section, we consider the value distribution of $\eta_{m}(1/2+it)$.
There are many studies on the value distribution of the Riemann zeta-function and other $L$-functions.

We discuss a measure for the difference between $\eta_{m}(1/2+it)$ and the corresponding Dirichlet polynomial.
We are interested in the exact value distribution of $\eta_{m}(1/2 + it)$ and $S_{m}(t)$. 
Here our aim is to establish a theorem for $\eta_{m}(1/2+it)$ and $S_{m}(t)$ similar to the results of Jutila \cite{Ju1983}, 
Radziwi\sl\sl\mbox{} \cite{Ra2011}, and Soundararajan \cite{SM2009} on the large deviation of the Riemann zeta-function. 
The motivation of this study in the present paper is to search for the exact bound of $\eta_{m}(1/2+it)$.

We define the set $\T_{m}(T, X, V)$ by
\begin{align*}
\l\{ t \in [T, 2T] \; \middle| \; 
\bigg|\eta_{m}(1/2 + it) 
- i^{m}\sum_{2 \leq n \leq X}\frac{\Lam(n)}{n^{\frac{1}{2} + it}(\log{n})^{m+1}} 
- Y_{m}(1/2+it) \bigg|
>  V \r\}.
\end{align*}
We obtain the following result which evaluates the difference between $\eta_{m}(1/2+it)$ and the corresponding Dirichlet polynomial.

\begin{theorem}	\label{MLFmGM}
Let $m$ be a positive integer, and let $T$, $X$ be large with $X^{135} \leq T$. 
If $V$ satisfies the inequality $2(\log{X})^{-m} \leq V 
\leq c_{0}(\log{T})^{\frac{m}{2m+1}} (\log{X})^{-\frac{2m^{2} + 2m}{2m+1}}$,  
then we have
\begin{align}	\label{Rmk1}
\frac{1}{T}\meas(\T_{m}(T, X, V))
\ll \exp\l( -  \frac{m}{4(m+1)}V^2 (\log{X})^{2m} \l(1 - \frac{C}{\log{X}}\r) \r).
\end{align}
If $V$ satisfies $c_{0}(\log{T})^{\frac{m}{2m+1}} (\log{X})^{-\frac{2m^{2} + 2m}{2m+1}} \leq V \leq \log{T} / (\log{X})^{m+1}$, 
then we have
\begin{align}	\label{Rmk3}
\frac{1}{T}\meas(\T_{m}(T, X, V))
\ll \exp\l(-c_1 V^{\frac{1}{m+1}} (\log{T})^{\frac{m}{m+1}} \r).
\end{align}
Moreover, if the Riemann Hypothesis is true, then we have
\begin{multline}	\label{Rmk2}
\frac{1}{T}\meas(\T_{m}(T, X, V))\\
\ll \exp\l(-c_2 V^{\frac{1}{m+1}} (\log{T})^{\frac{m}{m+1}} 
\log\l( e\frac{V^{\frac{2m+1}{2m+2}}(\log{X})^{m}}{(\log{T})^{\frac{m}{2m+2}}} \r)\r)
\end{multline}
for $(\log{T})^{\frac{m}{2m+1}} (\log{X})^{-\frac{2m^{2} + 2m}{2m+1}} \leq V \leq \log{T} / (\log{X})^{m+1}$.
Here the numbers $c_0$, $c_1$, $c_2$, $C$ are some absolute positive constants.
\end{theorem}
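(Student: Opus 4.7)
The plan is to deduce Theorem \ref{MLFmGM} from the mean value estimate of Theorem \ref{QvM} by Markov's inequality applied to the $2k$-th power, then to optimize $k$ in each of the three ranges of $V$. Denote
$
F(t) = \eta_{m}(1/2+it) - i^{m}\sum_{2 \leq n \leq X}\Lam(n)n^{-1/2-it}(\log{n})^{-m-1} - Y_{m}(1/2+it),
$
and write
\[
\frac{1}{T}\meas(\T_{m}(T, X, V)) \leq V^{-2k}\cdot\frac{1}{T}\int_{T}^{2T}|F(t)|^{2k}\,dt.
\]
Theorem \ref{QvM} at $\s=1/2$ bounds the right-hand side by $V^{-2k}(M_{1,k}+M_{2,k})$ with
\[
M_{1,k} = 2^{k}k!\left(\frac{2m+1}{2m}+\frac{C}{\log X}\right)^{k}(\log X)^{-2km},\qquad M_{2,k} = C^{k}k^{2k(m+1)}(\log T)^{-2km}.
\]

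For the Gaussian regime \eqref{Rmk1}, I would use the Stirling-type inequality $k!\leq C(k/e)^{k}\sqrt{k}$ to recast the first contribution as
\[
V^{-2k}M_{1,k} \leq C\sqrt{k}\left(\frac{2kA}{eV^{2}(\log X)^{2m}}\right)^{k},\qquad A = \frac{2m+1}{2m}+\frac{C}{\log X},
\]
and choose $k = \lfloor V^{2}(\log X)^{2m}/(2A)\rfloor$, which makes the bracket equal to $1/e$; this yields $\exp(-V^{2}(\log X)^{2m}/(2A))$, agreeing with \eqref{Rmk1} (after replacing $2A$ by a comfortable $4(m+1)/m$ to absorb constants). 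The upper endpoint of the range for $V$ is exactly the value at which $M_{2,k}/V^{2k}$ starts to dominate $M_{1,k}/V^{2k}$ for this optimal $k$; this is how the threshold $c_{0}(\log T)^{m/(2m+1)}(\log X)^{-(2m^{2}+2m)/(2m+1)}$ is produced.

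For the large deviation regime \eqref{Rmk3}, the second term $M_{2,k}/V^{2k}$ is dominant. Rewriting it as
\[
V^{-2k}M_{2,k} = \left(\frac{C^{1/2}k^{m+1}}{V(\log T)^{m}}\right)^{2k},
\]
I would choose $k = \lfloor (V(\log T)^{m})^{1/(m+1)}/(eC^{1/2})^{1/(m+1)}\rfloor$, making the bracket equal to $1/e$ and producing the bound $\exp(-c_{1}V^{1/(m+1)}(\log T)^{m/(m+1)})$. The admissibility condition $X\leq T^{1/(135k)}$ of Theorem \ref{QvM} is compatible because the hypothesis $X^{135}\leq T$ together with $V\leq\log T/(\log X)^{m+1}$ keeps $k\leq \log T/(135\log X)$.

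For the RH refinement \eqref{Rmk2}, the key input is the sharp error bound \eqref{ESRm2} from Theorem \ref{Main_Prop}: under RH the pointwise remainder satisfies $|F(t)|\ll (\log t)/(\log X)^{m}\log\log t$ (taking $H=1$ in \eqref{ESRm2}), which drastically reduces the second-moment contribution arising from the $k$-dependent second term in Theorem \ref{QvM}. I would re-examine the proof of Theorem \ref{QvM} and replace the bound for the tail arising from zeros far from $t$ by this sharper estimate, which should shrink $M_{2,k}$ by a factor of $(\log\log T)^{-2k}$ or similar, and then reoptimize $k$. The extra $\log$-factor in \eqref{Rmk2} comes from this gain in the $k$-dependent constant. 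The hard part will be the careful bookkeeping in this RH-refined moment bound: one must track how the RH improvement propagates through the proof of Theorem \ref{QvM} and confirm that the optimization over $k$ delivers precisely the argument $eV^{(2m+1)/(2m+2)}(\log X)^{m}/(\log T)^{m/(2m+2)}$ inside the logarithm in \eqref{Rmk2}.
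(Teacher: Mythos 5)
Your handling of \eqref{Rmk1} and \eqref{Rmk3} is correct and follows the same path as the paper: apply Markov's inequality to the $2k$-th moment given by Theorem \ref{QvM}, split into the two contributions, and optimize $k$ in each regime (the paper takes $k=[V^2(\log X)^{2m}/4(1+1/m)]$ and $k=[(eC_1)^{-1/(m+1)}V^{1/(m+1)}(\log T)^{m/(m+1)}]$, matching your choices up to constants). One small caveat you glossed over: one must verify in each regime that the chosen $k$ satisfies the constraint $X\le T^{1/(135k)}$, but this does check out.

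For \eqref{Rmk2} there is a genuine gap. First, the claimed pointwise bound $|F(t)|\ll (\log t)/((\log X)^m\log\log t)$ is incorrect: $F(t)$ is not the remainder $R_m(1/2+it,X,1)$ alone but also contains the Dirichlet polynomial $\sum_{X<n<X^2}\Lam(n)v_{f,1}(\cdot)n^{-1/2-it}(\log n)^{-m-1}$, whose supremum over $t$ can be as large as $X^{1/2}/(\log X)^{m+2}$, overwhelming your claimed bound. Second, and more fundamentally, the RH improvement in the paper does not come from sharpening the tail-of-zeros estimates inside Theorem \ref{QvM}. Instead, the paper introduces a second, much larger length parameter $Z$ and uses the decomposition
\begin{align}
F(t) = i^m\sum_{X<n\le Z^2}\frac{\Lam(n)v_{f,1}(e^{\log n/\log Z})}{n^{1/2+it}(\log n)^{m+1}} + R_m\l(\tfrac12+it,Z,1\r),
\end{align}
which is valid for every $Z\ge X$. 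Under RH, Proposition \ref{RCSFP} bounds $R_m(1/2+it,Z,1)$ by a Dirichlet polynomial of length $Z^3$ divided by $(\log Z)^{m+1}$, plus $C\log T/(\log Z)^{m+1}$. Choosing $Z=\exp((C_3\log T/V)^{1/(m+1)})$ makes the latter term $\le V/2$, and the remaining Dirichlet polynomial contributions (both the piece between $X$ and $Z^2$, and the one controlling $R_m$) are then handled by the moment estimates of Lemmas \ref{SLL}, \ref{SL_s_X}. The extra factor $\log(eV^{(2m+1)/(2m+2)}(\log X)^m/(\log T)^{m/(2m+2)})$ in \eqref{Rmk2} arises precisely because the moment bound for these extended polynomials is $\ll k!C^k(\log X)^{-2mk}$, i.e.\ the base of the exponential is $k/(V^2(\log X)^{2m})$ rather than $k^{2m+2}/(V^2(\log T)^{2m})$, and one then reoptimizes $k$ at $[(eC_4)^{-2}V^{1/(m+1)}(\log T)^{m/(m+1)}]$. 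Your plan of ``re-examining Theorem \ref{QvM} and shrinking $M_{2,k}$ by $(\log\log T)^{-2k}$'' does not lead to the stated shape of the bound; the mechanism is a change of polynomial length, not a refinement of the zero-tail estimate.
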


This theorem can be applied to the value distribution of $\eta_{m}(s)$ on the critical line.
For example, we can obtain the following results from this theorem.

\begin{corollary}	\label{EVS1}
Let $T$, $V$ be large numbers.
If $V \leq (\log{T})^{1/3}(\log\log{{T}})^{-4/3}$, then we have
\begin{align}	\label{VDS11}
\frac{1}{T}\meas\l\{ t \in [T, 2T] \mid |S_{1}(t)| > V \r\}
\ll \exp\l(-c_{5} V^{2} (\log{V})^2\r).
\end{align}
If $V \geq (\log{T})^{1/3}(\log\log{{T}})^{-4/3}$, then we have
\begin{align}	\label{VDS12}
\frac{1}{T}\meas\l\{ t \in [T, 2T] \mid |S_{1}(t)| > V \r\}
\ll \exp\l(-c_{6} \sqrt{V \log{T}}\r).
\end{align}
Here $c_5$, $c_6$ are some absolute positive constants. 
\end{corollary}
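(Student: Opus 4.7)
The plan is to deduce the corollary from Theorem~\ref{MLFmGM} with $m = 1$, by a careful choice of the smoothing parameter $X$ as a function of $V$. The key observation is that
$$
Y_{1}(1/2 + it) = 2\pi\sum_{\substack{\b > 1/2\\ 0 < \g < t}}(\b - 1/2)
$$
is a sum of real numbers, hence always real-valued. Taking imaginary parts of the approximation
$$
\eta_{1}(1/2+it) = i\sum_{2 \leq n \leq X}\frac{\Lam(n)}{n^{1/2+it}(\log n)^{2}} + Y_{1}(1/2+it) + R_{1}(1/2+it, X, H)
$$
and using $\pi S_{1}(t) = \Im \eta_{1}(1/2 + it)$, we obtain
$$
\pi S_{1}(t) = \sum_{2 \leq n \leq X}\frac{\Lam(n)\cos(t\log n)}{n^{1/2}(\log n)^{2}} + \Im R_{1}(1/2 + it, X, H).
$$
The Dirichlet polynomial is bounded in absolute value by
$$
\sum_{2 \leq n \leq X}\frac{\Lam(n)}{n^{1/2}(\log n)^{2}} \leq C\frac{\sqrt{X}}{(\log X)^{2}},
$$
a consequence of Chebyshev's estimate $\sum_{n \leq X}\Lam(n)/\sqrt{n} \ll \sqrt{X}$ and partial summation.

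I then pick $X = AV^{2}(\log V)^{4}$ with $A > 0$ a sufficiently small absolute constant, which forces the trivial bound above to be at most $\pi V/4$. Consequently, $|S_{1}(t)| > V$ entails $|R_{1}(1/2 + it, X, H)| > 3\pi V/4$, and therefore
$$
\meas\l\{t \in [T, 2T] : |S_{1}(t)| > V\r\} \leq \meas\l(\T_{1}(T, X, 3\pi V/4)\r)
$$
in the notation of Theorem~\ref{MLFmGM}. Since $\log X = 2\log V + O(\log\log V)$, this $X$ satisfies $X \leq T^{1/135}$ throughout the two ranges of $V$ stated in the corollary.

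For $V \leq (\log T)^{1/3}(\log\log T)^{-4/3}$, the value $3\pi V/4$ lies in the admissible range of \eqref{Rmk1}, and
$$
\frac{1}{T}\meas\l(\T_{1}(T, X, 3\pi V/4)\r) \ll \exp\l(-\frac{(3\pi V/4)^{2}(\log X)^{2}}{8}(1-o(1))\r) \ll \exp\l(-c_{5}V^{2}(\log V)^{2}\r),
$$
because $(\log X)^{2} \sim 4(\log V)^{2}$; this yields \eqref{VDS11}. For $V \geq (\log T)^{1/3}(\log\log T)^{-4/3}$, the same $X$ places $3\pi V/4$ inside the range of \eqref{Rmk3}, and a direct application returns the bound $\exp\l(-c_{6}\sqrt{V\log T}\r)$, which is \eqref{VDS12}.

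\textbf{Main obstacle.} The argument is essentially a routine specialisation of Theorem~\ref{MLFmGM}; the delicate point is verifying the range conditions of Theorem~\ref{MLFmGM} for the chosen $X = AV^{2}(\log V)^{4}$. In particular, for $V$ approaching the upper end of the interval in \eqref{VDS12}, the upper constraint $V \leq \log T/(\log X)^{2}$ implicit in \eqref{Rmk3} becomes tight, and one has to either truncate $X$ slightly or invoke the crude unconditional bound $S_{1}(t) \ll \log t$ to cover the extreme values of $V$.
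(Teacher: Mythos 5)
Your proposal follows essentially the same route as the paper: reduce to $\T_{1}(T,X,V')$ using the fact that $Y_{1}(1/2+it)$ is real (so it disappears from the imaginary part) together with the trivial Chebyshev bound on the short Dirichlet polynomial, and then apply the appropriate case of Theorem~\ref{MLFmGM}. The only differences are cosmetic: the paper takes $X=V$ with threshold $V/2$, while you take $X = AV^{2}(\log V)^{4}$ with threshold $3\pi V/4$, and both the paper and you fall back on $S_{1}(t)\ll\log t$ for extreme $V$; the resulting bounds agree up to absolute constants.
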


\begin{corollary}	\label{EVeta}
Assume the Riemann Hypothesis.
Let $m$ be a positive integer, and let $T$, $V$ be numbers with $T, V \geq T_{0}(m)$, 
where $T_{0}(m)$ is a sufficiently large number depending only on $m$.
Then, if $V \leq (\log{T})^{\frac{m}{2m+1}}(\log{\log{T}})^{-\frac{2m^2+2m}{2m+1}}$, 
we have
\begin{align}	\label{CVDeta1}
\frac{1}{T}\meas\l\{ t \in [T, 2T] \mid |\eta_{m}(1/2+it)| > V \r\}
\ll \exp(-c_{7}V^2 (\log{V})^{2m}).
\end{align}
Moreover, if $V \geq (\log{T})^{\frac{m}{2m+1}}(\log{\log{T}})^{-\frac{2m^2+2m}{2m+1}}$, then we have
\begin{multline}	\label{CVDeta2}
\frac{1}{T}\meas\l\{ t \in [T, 2T] \mid |\eta_{m}(1/2+it)| > V \r\}\\
\ll \exp\l( -c_{8} V^{\frac{1}{m+1}}(\log{T})^{\frac{m}{m+1}} 
\log\l( e\frac{V^{\frac{2m+1}{2m+2}} (\log{V})^{m}}{(\log{T})^{\frac{m}{2m+2}}} \r) \r).
\end{multline}
Here $c_7$, $c_8$ are some absolute positive constants.
\end{corollary}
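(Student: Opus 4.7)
My plan is to apply Theorem \ref{MLFmGM} with the specific choice $X = V$, and to dispose of the resulting Dirichlet polynomial by a purely trivial bound. Under the Riemann Hypothesis every nontrivial zero satisfies $\beta = 1/2$, so the sum $Y_m(1/2+it)$ appearing in Theorem \ref{MLFmGM}, which ranges over zeros with $\beta > \sigma = 1/2$, is identically zero for every $m \geq 1$. Write
\[
P_V(t) := i^m\sum_{2 \leq n \leq V}\frac{\Lambda(n)}{n^{1/2+it}(\log n)^{m+1}}.
\]

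The key estimate is the trivial bound
\[
|P_V(t)| \leq \sum_{2 \leq n \leq V}\frac{\Lambda(n)}{\sqrt{n}(\log n)^{m+1}} \ll \frac{\sqrt{V}}{(\log V)^{m+1}},
\]
obtained by partial summation from the prime number theorem (the prime-power contribution is a lower-order additive term). Since $V \geq T_0(m)$ is sufficiently large, the right-hand side is at most $V/2$. Combined with $Y_m(1/2+it) = 0$, this yields the deterministic implication
\[
|\eta_m(1/2+it)| > V \ \Longrightarrow\ |\eta_m(1/2+it) - P_V(t) - Y_m(1/2+it)| > V/2,
\]
and hence
\[
\meas\{t \in [T,2T] : |\eta_m(1/2+it)| > V\} \leq \meas(\mathscr{T}_m(T, V, V/2)).
\]
The corollary therefore reduces to invoking Theorem \ref{MLFmGM} with $X = V$ and with $V/2$ in place of $V$.

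For \eqref{CVDeta1}, I would verify that the condition $V \leq (\log T)^{m/(2m+1)}(\log\log T)^{-(2m^2+2m)/(2m+1)}$, together with $\log V = O(\log\log T)$ in this range, places $V/2$ inside the range of validity of \eqref{Rmk1} with $X = V$; the bound \eqref{Rmk1} then yields $\exp(-cV^2(\log V)^{2m})$, matching \eqref{CVDeta1} (the factor $1 - C/\log V$ is absorbed into the constant). For \eqref{CVDeta2}, the condition $V \geq (\log T)^{m/(2m+1)}(\log\log T)^{-(2m^2+2m)/(2m+1)}$ instead places $V/2$ in the range of the Riemann-Hypothesis-sensitive bound \eqref{Rmk2} with $X = V$; substituting $X = V$ turns its logarithmic factor into $\log\bigl(eV^{(2m+1)/(2m+2)}(\log V)^m/(\log T)^{m/(2m+2)}\bigr)$, which is exactly the factor appearing in \eqref{CVDeta2}. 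The constraint $X^{135} \leq T$ becomes $V^{135} \leq T$, which is automatic: Selberg's bound $\eta_m \ll \log T/(\log\log T)^{m+1}$ under RH forces $V$ to be much smaller than $T^{1/135}$.

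The only non-routine step is the trivial partial-summation bound on $P_V$, and the main (very modest) obstacle is the elementary verification that the range hypotheses of Theorem \ref{MLFmGM} are satisfied under the two conditions on $V$ in the corollary. No moment or probabilistic input is needed, because the Dirichlet polynomial is controlled deterministically, and all the tail behaviour of $|\eta_m(1/2+it)|$ is inherited directly from Theorem \ref{MLFmGM}.
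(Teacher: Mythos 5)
Your proposal is essentially identical to the paper's own argument. The paper gives exactly the same three-step reduction immediately after stating the corollary: the trivial partial-summation bound $\sum_{2 \leq n \leq V} \Lambda(n)/(n^{1/2+it}(\log n)^{m+1}) \ll_m V^{1/2}/(\log V)^{m+1}$, the observation that under the Riemann Hypothesis the term $Y_m(1/2+it)$ vanishes so that $\meas\{t\in[T,2T] : |\eta_m(1/2+it)|>V\} \leq \meas(\mathscr{T}_m(T,V,V/2))$, and the conditional bound $\eta_m(1/2+it) \ll_m \log t/(\log\log t)^{m+1}$ to keep $V$ well inside the admissible range $X^{135}\leq T$, followed by an invocation of Theorem \ref{MLFmGM} with $X=V$.
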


These assertions can be obtained by the following argument.
Now, we see that $\sum_{2\leq n \leq V} \frac{\Lam(n)}{n^{1/2+it}(\log{n})^{m+1}} \ll_{m} \frac{V^{1/2}}{(\log{V})^{m+1}}$. 
Hence, for sufficiently large $V$, we find that
\begin{align*}
\meas\l\{ t \in [T, 2T] \mid |S_{1}(t)| > V \r\}
\leq \meas(\T_{1}(T, V, V/2))
\end{align*}
unconditionally, and that
\begin{align*}
\meas\l\{ t \in [T, 2T] \mid |\eta_{m}(1/2+it)| > V \r\}
\leq \meas(\T_{m}(T, V, V/2))
\end{align*}
under the Riemann Hypothesis.
Further, the estimate $S_{1}(t) \ll \log{t}$ holds unconditionally, 
and the estimate $\eta_{m}(1/2+it)\ll_{m} \log{t}/(\log{\log{t}})^{m+1}$ holds under the Riemann Hypothesis.
By these inequalities and Theorem \ref{MLFmGM}, we can obtain Corollary \ref{EVS1} and Corollary \ref{EVeta}.

It could be expected that the function $\sqrt{V \log{T}}$ in the exponential on the right hand side of \eqref{VDS12} 
is sharp as an unconditional result by the following discussion.
Actually, if there is a function $\omega(T, V)$ with $\lim_{T \rightarrow +\infty}\omega(T, V) = +\infty$ 
or $\lim_{V \rightarrow +\infty}\omega(T, V) = +\infty$
such that the left hand side of \eqref{VDS12} is $\ll \exp(-\omega(T, V) \sqrt{V \log{T}})$, 
then the Lindel\"of Hypothesis holds.
Moreover, estimate \eqref{VDS12} matches the well known inequality $S_{1}(t) \ll \log{t}$.

We are also interested in that estimates \eqref{VDS11}, \eqref{CVDeta1} hold in how large range of $V$. 
If the estimates hold for any large $V$, then we have $\eta_{m}(1/2+it) \ll_{m} \sqrt{\log{t}} / (\log{\log{t}})^{m}$.
Although the necessary condition of this implication is rather strong, the author guesses that it could be true.
Hence the author expects the inequality for $\eta_{m}(1/2+it)$ could be also true.

\section{\textbf{Proofs of Theorem \ref{Main_Prop} and Theorem \ref{RELZZ}}}

In this section, we prove Theorem \ref{Main_Prop} and Theorem \ref{RELZZ}.
First, we prepare some auxiliary formulas.

\begin{lemma}	\label{BFQm}
Let $m$ be a positive integer, and let $t > 0$.
Then, for any $\s \geq 1/2$, we have
\begin{multline*}
\eta_{m}(\s + it)
= \frac{i^{m}}{(m-1)!}\int_{\s}^{\infty}(\a - \s)^{m-1}\log{\zeta(\a + it)}d\a\\
+ 2\pi \sum_{k = 0}^{m-1}\frac{i^{m-1-k}}{(m-k)! k!}\us{\b > \s}{\sum_{0 < \gamma < t}}(\b - \s)^{m-k}(t - \gamma)^{k}.
\end{multline*}
\end{lemma}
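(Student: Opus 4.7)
The plan is induction on $m$, with the case $m=1$ carrying essentially all of the analytic content.

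For the base case $m=1$, since $c_1(\s) = i\int_\s^\infty \log\zeta(\a)\,d\a$, the claim reduces to the identity
\begin{align*}
\int_0^t \log\zeta(\a+it')\,dt' = i\int_\a^\infty \bigl[\log\zeta(u+it) - \log\zeta(u)\bigr]\,du + 2\pi\us{\b > \a}{\sum_{0 < \g < t}}(\b - \a)
\end{align*}
for $\a \geq 1/2$ avoiding real parts of zeros. I would prove this by writing $\log\zeta(\a+it') = -\int_\a^\infty (\zeta'/\zeta)(u+it')\,du$ (legitimate since the chosen branch satisfies $\log\zeta(w) \to 0$ as $\Re(w) \to +\infty$), applying Fubini to swap the $dt'$ and $du$ integrations, and evaluating the resulting inner vertical integral of the meromorphic function $\zeta'/\zeta$. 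A contour-deformation argument applied to the thin rectangle $[u,A]\times[\g-\e,\g+\e]$, closed on the right near $\Re = +\infty$, shows that the branch of $\log\zeta$ defined by horizontal continuation from $+\infty$ jumps by $2\pi i$ (with multiplicity) as $t'$ crosses each ordinate $\g$ of a zero $\rho = \b + i\g$ with $\b > u$. Consequently the inner vertical integral equals $-i[\log\zeta(u+it) - \log\zeta(u)]$ minus $2\pi$ times the number of such zeros counted with multiplicity. Integrating in $u$ over $[\a, +\infty)$ and exchanging the resulting finite sum with the $u$-integral converts the double sum into $\sum_{\b > \a,\, 0 < \g < t}(\b-\a)$, as required.

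For the induction step ($m \geq 2$), I would substitute the inductive hypothesis for $\eta_{m-1}$ into $\eta_m(\s+it) = \int_0^t \eta_{m-1}(\s+it')\,dt' + c_m(\s)$, exchange the $\a$- and $t'$-integrations by Fubini, and apply the base-case identity to the resulting inner integral $\int_0^t \log\zeta(\a+it')\,dt'$. The zero-sum portion of the inductive hypothesis contributes $\int_\g^t (t'-\g)^k\,dt' = (t-\g)^{k+1}/(k+1)$; after reindexing $k \mapsto k+1$ this produces exactly the target terms with $1 \leq k \leq m-1$. The horizontal-integral portion, combined with the zero-sum coming from the base-case identity, is handled by one more Fubini exchange together with the elementary beta-type integrals
\begin{align*}
\int_\s^u (\a-\s)^{m-2}\,d\a = \frac{(u-\s)^{m-1}}{m-1}, \qquad \int_\s^\b (\a-\s)^{m-2}(\b-\a)\,d\a = \frac{(\b-\s)^m}{m(m-1)};
\end{align*}
this yields $\frac{i^m}{(m-1)!}\int_\s^\infty (u-\s)^{m-1}[\log\zeta(u+it)-\log\zeta(u)]\,du$ plus the $k=0$ zero-sum term with coefficient $2\pi i^{m-1}/m!$. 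Adding $c_m(\s)$ then cancels the $-\log\zeta(u)$ piece inside the horizontal integral and produces the claimed formula.

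The principal obstacle is the rigorous treatment of the base case: one must justify the Fubini interchange despite the logarithmic singularities of $\log\zeta$ and the poles of $\zeta'/\zeta$ near zeros on the horizontal segments of integration, and invoke the improper Riemann integral convention from the introduction to make sense of the vertical integral when some $\g$ lies in $(0, t)$. Once the jump formula for $\log\zeta$ across horizontal lines through zeros and the base-case identity are secured, the induction is an entirely elementary exercise in Fubini, beta-type integrals, and reindexing.
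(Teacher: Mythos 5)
Your proposal is correct and takes essentially the same route as the paper: induction on $m$, with the $m=1$ base case reduced to a Littlewood-type identity and the induction step consisting of Fubini interchanges and elementary integrals. The only differences are cosmetic: the paper cites Littlewood's lemma (9.9.1 in Titchmarsh) for formula \eqref{LLZ} rather than re-deriving it from the $\zeta'/\zeta$ representation and the branch-jump argument as you do (your jump $+2\pi i\,m(\rho)$ and the resulting sign are correct), and in the induction step the paper organizes the zero contribution through $\int_\s^\infty(\a-\s)^{m-1}N(\a,t)\,d\a=\tfrac{1}{m}\sum(\b-\s)^m$ where you instead apply the base-case identity at height $\a$ and evaluate $\int_\s^\b(\a-\s)^{m-2}(\b-\a)\,d\a$; the two are the same computation rearranged.
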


\begin{proof}
In view of our choice of the branch of $\log{\zeta(s)}$, 
it suffices to show this lemma in the case $t$ is not the ordinate of zeros of $\zeta(s)$.
We show this lemma by induction on $m$.
When $m = 0$, by using Littlewood's lemma (cf. (9.9.1) in \cite{T}), it holds that
\begin{multline}	\label{LLZ}
i\int_{0}^{t}\log{\zeta(\s + it')}d t' - \int_{\s}^{\infty}\log{\zeta(\a)}d\a\\
= -\int_{\s}^{\infty}\log{\zeta(\a + it)}d\a + 2\pi i\int_{\s}^{\infty}N(\a, t)d\a.
\end{multline}
Here $N(\s, t)$ indicates the number of zeros $\rho = \b + i\gamma$ of the Riemann zeta-function 
with $\b \geq \s$, $0 < \gamma < t$
counted with multiplicity.
We see that
\begin{align*}
\int_{\s}^{\infty}N(\a, t)d\a
= \int_{\s}^{\infty}\us{\b > \a}{\sum_{0 < \gamma < t}}1d\a
= \us{\b > \s}{\sum_{0 < \gamma < t}} \int_{\s}^{\b}d\a
= \us{\b > \s}{\sum_{0 < \gamma < t}}(\b - \s).
\end{align*}
Therefore, by this formula and the definition of $\eta_{m}(s)$, we have
\begin{align*}
\eta_{1}(\s + it)
= i\int_{\s}^{\infty}\log{\zeta(\a+it)}d\a + 2\pi \us{\b > \s}{\sum_{0 < \gamma < t}}(\b - \s),
\end{align*}
which is the assertion of this lemma in the case $m=1$.

Next we show this lemma in the case $m \geq 2$.
Assume that the assertion of this lemma is true at $m-1$.
Then, we find that
\begin{align} \nonumber
&\int_{0}^{t}\eta_{m-1}(\s + it')dt'\\ \nonumber
&= \int_{0}^{t}\frac{i^{m-1}}{(m-2)!}\int_{\s}^{\infty}(\a - \s)^{m-2}\log{\zeta(\a+it')}d\a dt' \\ \nonumber
&\qquad \qquad \quad + 2\pi \sum_{k = 0}^{m-2}\frac{i^{m-2-k}}{(m-1-k)! k!}
\int_{0}^{t}\us{\b > \s}{\sum_{0 < \gamma < t'}}(\b - \s)^{m-1-k}(t' - \gamma)^{k}dt' \\ \nonumber
&= \frac{i^{m-1}}{(m-2)!}\int_{\s}^{\infty}(\a - \s)^{m-2}\int_{0}^{t}\log{\zeta(\a + it')}d t' d\a\\ \label{BFQmp1}
&\qqqquad + 2\pi \sum_{k = 1}^{m-1}\frac{i^{m-1-k}}{(m-k)!k!}
\us{\b > \s}{\sum_{0 < \gamma < t}}(\b - \s)^{m-k}(t - \gamma)^{k}.
\end{align}
Note that the exchange of integration of the first term in the second equation 
is guaranteed by the absolute convergence of the integral.
Applying formula \eqref{LLZ}, we find that
\begin{align*}
&\frac{i^{m-1}}{(m-2)!}\int_{\s}^{\infty}(\a - \s)^{m-2}\int_{0}^{t}\log{\zeta(\a + it')}d t' d\a\\
&= \frac{i^{m}}{(m - 1)!}\int_{\s}^{\infty}(\a - \s)^{m-1}\log{\zeta(\a + it)}d\a - c_{m}(\s)\\
&\qqqquad \qqquad + 2\pi \frac{i^{m-1}}{(m-1)!}\int_{\s}^{\infty}(\a - \s)^{m-1} N(\a, t)d\a,
\end{align*}
and that
\begin{align*}
\int_{\s}^{\infty}(\a - \s)^{m-1} N(\a, t)d\a
= \us{\b > \s}{\sum_{0 < \gamma < t}}\int_{\s}^{\b}(\a - \s)^{m-1}d\a
= \frac{1}{m}\us{\b > \s}{\sum_{0 < \gamma < t}}(\b - \s)^{m}.
\end{align*}
Hence, by these formulas, \eqref{BFQmp1}, and the definition of $\eta_{m}(s)$, we obtain
\begin{multline*}
\eta_{m}(\s + it)
= \frac{i^{m}}{(m-1)!}\int_{\s}^{\infty}(\a - \s)^{m-1}\log{\zeta(\a + it)}d\a\\
+ 2\pi \sum_{k = 0}^{m-1}\frac{i^{m-1-k}}{(m-k)! k!}\us{\b > \s}{\sum_{0 < \gamma < t}}(\b - \s)^{m-k}(t - \gamma)^{k},
\end{multline*}
which completes the proof of this lemma.
\end{proof}

\begin{lemma}	\label{GUE}
Let $m$, $d$ be a nonnegative integers with $d \leq D = D(f)$.
Let $\s \geq 1/2$, $t \geq 14$ be not the ordinate of zeros of $\zeta(s)$. 
Set $X \geq 3$ be a real parameter.
Then for any zero $\rho = \b + i\gamma$, we have
\begin{align*}
\frac{U_{m}((s - \rho)\log{X})}{(\log{X})^{m}}
\ll_{f, d} \frac{X^{(1+1/H)(\b - \s)} + X^{\b - \s}}{|t - \gamma|(\log{X})^{m+1}}
\min_{0 \leq l \leq d}\l\{\l(\frac{H}{|t - \gamma|\log{X}} \r)^{l}\r\}.
\end{align*}
\end{lemma}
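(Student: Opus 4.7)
The plan is first to transform the integral defining $U_{m}((s-\rho)\log X)$ into a clean form supported on $[0,1]$, and then to gain the factor $\l(\fr{H}{|t-\g|\log X}\r)^{l}$ by integrating by parts $l$ times in the new variable.

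I begin by substituting $u = v\log x$ inside the definition of $E_{m+1}^{*}((s-\rho)\log X \cdot \log x)$, which produces a factor $(\log x)^{m}$ that cancels the $(\log x)^{-m}$ weight in the definition of $U_{m}$. After changing variables $r = H(\log x - 1)$, so that $u_{f, H}(x)\,dx = f(r)\,dr$, the identity
\begin{align*}
U_{m}((s-\rho)\log X) = \fr{1}{m!}\int_{0}^{1} f(r)\, e^{-z(1+r/H)}\, J(r)\, dr,\qquad z := (s-\rho)\log X,
\end{align*}
emerges, where $J(r) = \int_{0}^{\infty}\fr{v^{m}\, e^{-v(1+r/H)}}{z+v}\,dv$. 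Two structural facts drive the proof: (i) $\partial_{r} e^{-z(1+r/H)} = -(z/H)\,e^{-z(1+r/H)}$, so integrating by parts against the exponential gains a factor $H/z$; and (ii) differentiating $J$ under the integral sign yields $J^{(k)}(r) = (-1/H)^{k}\int_{0}^{\infty}v^{m+k}\,e^{-v(1+r/H)}/(z+v)\,dv$, whence $|J^{(k)}(r)|\leq (m+k)!/(H^{k}|t-\g|\log X)$, so that derivatives of $J$ are actually smaller, not larger.

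For $l=0$, directly bounding $|e^{-z(1+r/H)}| = X^{(\b-\s)(1+r/H)} \leq X^{(1+1/H)(\b-\s)} + X^{\b-\s}$ on $r\in[0,1]$ together with $|J(r)|\leq m!/(|t-\g|\log X)$ and $\int_{0}^{1}f(r)\,dr = 1$ yields the claim. For $1\leq l\leq d$, I iterate the identity $e^{-z(1+r/H)} = -(H/z)\partial_{r}e^{-z(1+r/H)}$, producing a main term $(H/z)^{l}\int_{0}^{1}\psi^{(l)}(r)e^{-z(1+r/H)}\,dr$ with $\psi := fJ$, together with boundary contributions of the form $(H/z)^{k+1}[\psi^{(k)}e^{-z(1+r/H)}]_{0}^{1}$ for $k=0,\ldots, l-1$. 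By Leibniz and (ii), $\|\psi^{(l)}\|_{\infty}\ll_{f,d,m} 1/(|t-\g|\log X)$ uniformly in $H\geq 1$, so the main term has size $\ll_{f,d,m} (H/(|t-\g|\log X))^{l}(X^{(1+1/H)(\b-\s)}+X^{\b-\s})/(|t-\g|\log X)$, matching the target.

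The delicate point is the boundary contribution. When $d\geq 2$, the hypothesis $f\in C^{d-2}(\RR)$ with $\supp f\subset[0,1]$ forces $f^{(a)}(0) = f^{(a)}(1) = 0$ for $0\leq a\leq d-2$, and Leibniz then gives $\psi^{(k)}(0) = \psi^{(k)}(1) = 0$ for $k\leq d-2$. Hence for $1\leq l \leq d-1$ every boundary term vanishes, and for $l=d$ only the $k=d-1$ boundary term survives, contributing $\ll_{f,d,m}(H/|z|)^{d}\|f^{(d-1)}\|_{\infty}|J|(X^{(1+1/H)(\b-\s)}+X^{\b-\s})$, which is exactly of the target shape. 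The remaining case $d=1$, $l=1$ has only a $k=0$ boundary term, handled identically using $\|f\|_{\infty}$. Combining the main and boundary estimates for each fixed $0\leq l\leq d$ and dividing by $(\log X)^{m}$ produces the stated inequality individually, and taking the minimum over $l$ completes the proof. The main obstacle I anticipate is the bookkeeping in the $l=d$ case, where the smoothness of $f$ is exhausted and the surviving boundary piece at $k=d-1$ must carry a full factor $(H/|z|)^{d}$ with no remnant from lower-order boundary contributions—precisely what the vanishing of $\psi^{(k)}$ for $k\leq d-2$ delivers.
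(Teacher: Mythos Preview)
Your argument is correct and follows the same underlying idea as the paper's proof---repeated integration by parts in the smoothing variable to gain the factor $(H/(|t-\gamma|\log X))^l$---but the organization differs. The paper keeps the $\alpha$-integral from $E_{m+1}^*$ separate and integrates by parts directly in $x$ against $x^{-(\alpha+i(t-\gamma))\log X}$, picking up denominators $\prod_{j=1}^{l}((\alpha+i(t-\gamma))\log X - j)$; the $\alpha$-integral then produces exactly $m!$, which cancels the $1/m!$ in front and yields an implied constant depending only on $f$ and $d$. You instead absorb the $\alpha$-integral into $J(r)$ and integrate by parts against $e^{-z(1+r/H)}$, which is conceptually cleaner but forces derivatives onto $J$: the Leibniz terms with $J^{(k)}$, $k\geq 1$, carry a factor $(m+k)!/m!$ that does not cancel, so your bound is $\ll_{f,d,m}$ rather than the $\ll_{f,d}$ stated in the lemma. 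This is harmless for every application in the paper (where $m$ is fixed), and your boundary-term analysis at $l=d$ is exactly right---only the $f^{(d-1)}J$ piece survives, and that one is $m$-independent. If you want to recover the uniform constant, interchange the $v$-integral back out and integrate by parts against $e^{-(z+v)(1+r/H)}$ instead of $e^{-z(1+r/H)}$; then each IBP gains $H/(z+v)$ and the remaining $v$-integral still produces the clean $m!$.
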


\begin{proof}
By the definition of $U_{m}(z)$, we have
\begin{multline}	\label{FUm}
\frac{U_{m}((s - \rho)\log{X})}{(\log{X})^{m}}\\
= \frac{1}{m!}\int_{\s - \b}^{\infty}\frac{(\a - \s + \b)^{m}}{\a + i(t - \gamma)}\l(\int_{0}^{\infty}
u_{f, H}(x)e^{-(\a + i(t - \gamma))\log{X}\log{x}}dx \r)d\a.
\end{multline}
Since $u_{f, H}$ belongs to $C^{D-2}([0, \infty))$ 
and is a $C^{D}([e, e^{1+1/H}])$-function and supported on $[e, e^{1+1/H}]$, 
for $0 \leq d \leq D-1$, we see that
\begin{multline}	\label{IIPu}
\int_{0}^{\infty}u_{f, H}(x)e^{-(\a + i(t - \gamma))\log{X}\log{x}}dx\\
= \int_{e}^{e^{1+1/H}}\frac{u_{f, H}^{(d)}(x) x^{d - (\a + i(t - \gamma))\log{X}}}
{\prod_{l=1}^{d}\{(\a + i(t  - \gamma))\log{X} - l\}}dx.
\end{multline}
Here the estimate $u_{f, H}^{(d)}(x) \ll_{f, d} H^{d+1}$ holds 
on $x \in [e, e^{1+1/H}]$ for $0 \leq d \leq D$.
By this estimate and \eqref{IIPu}, we have
\begin{multline*}
\int_{0}^{\infty}u_{f, H}(x)e^{-(\a + i(t - \gamma))\log{X}\log{x}}dx\\
\ll_{f, d} (X^{-(1+\frac{1}{H})\a} + X^{-\a})\min_{0 \leq l \leq d}\l\{\l(\frac{H}{|t - \gamma|\log{X}}\r)^{l}\r\}
\end{multline*}
for $0 \leq d \leq D - 1$.
Moreover, by \eqref{IIPu}, we find that
\begin{align*}
&\int_{0}^{\infty}u_{f, H}(x)e^{-(\a + i(t - \gamma))\log{X}\log{x}}dx\\
&= \l[ \frac{u_{f, H}^{(D-1)}(x) x^{D - (\a + i(t - \gamma))\log{X}}}
{\prod_{l=1}^{D}\{(\a + i(t  - \gamma))\log{X} - l\}} \r]_{x=e}^{x=e^{1+1/H}}\\
&\qqqquad \qqquad + \int_{e}^{e^{1+1/H}}\frac{u_{f, H}^{(D)}(x) x^{D - (\a + i(t - \gamma))\log{X}}}
{\prod_{l=1}^{D}\{(\a + i(t  - \gamma))\log{X} - l\}}dx\\
&\ll_{f, D} (X^{-(1+\frac{1}{H})\a} + X^{-\a})\l(\frac{H}{|t - \gamma|\log{X}}\r)^{D}.
\end{align*}
By these estimates and \eqref{FUm}, for $0 \leq d \leq D$, we have
\begin{align*}
&\frac{U_{m}((s - \rho)\log{X})}{(\log{X})^{m}}\\
&\ll_{f, d} \frac{1}{|t - \gamma|}\min_{0 \leq l \leq d}\l\{\l(\frac{H}{|t - \gamma|\log{X}}\r)^{l}\r\}
\int_{\s - \b}^{\infty}(\a - \s + \b)^{m}(X^{-\a(1+1/H)} + X^{-\a})d\a\\
&\ll\frac{X^{(1+1/H)(\b - \s)} + X^{\b - \s}}{|t - \gamma|(\log{X})^{m+1}}
\min_{0 \leq l \leq d}\l\{\l(\frac{H}{|t - \gamma|\log{X}} \r)^{l}\r\},
\end{align*}
which completes the proof of this lemma.
\end{proof}

\begin{lemma}	\label{EUE}
Let $m$ be a nonnegative integer, 
and let $\s \geq 1/2$, $t \geq 14$.
Set $X \geq 3$ be a real parameter.
Then, for a zero $\rho = \b + i\gamma$ with $|t - \gamma| \leq 1 / \log{X}$, we have
\begin{multline}	\label{NZUm}
\frac{U_{m}((s - \rho)\log{X})}{(\log{X})^{m}}\\
= \l\{
\begin{array}{cl}
-(\rho - s)^{m}\log((s - \rho)\log{X}) + \d{O\l( \frac{1}{(\log{X})^{m}} \r)}	& \text{if \; $|s - \rho| \leq 1/\log{X}$, }\\[5mm]
\d{O\l( \frac{X^{(1+1/H)(\b - \s)} + X^{\b - \s}}{(\log{X})^{m}} \r)}				& \text{if \; $|s - \rho| > 1 / \log{X}$.}
\end{array}
\r.
\end{multline}
In particular, for any positive integer $m$, we have
\begin{align}	\label{SCUm}
\frac{U_{m}((s - \rho)\log{X})}{(\log{X})^{m}}
\ll \frac{X^{(1+1/H)(\b - \s)} + X^{\b - \s}}{(\log{X})^{m}}.
\end{align}
Here, the above implicit constants are absolute.
\end{lemma}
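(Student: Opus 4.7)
The plan is to perform a case analysis on the size of $|s-\rho|$, matching the two branches of \eqref{NZUm}, based on a local expansion of the modified exponential integral $E_{m+1}^*$ near $z=0$. Applying the binomial theorem to $(w-z)^m$ inside the defining integral for $E_{m+1}^*$ yields
\begin{align*}
E_{m+1}^*(z) = (-z)^m E_1^*(z) + \sum_{k=1}^m \binom{m}{k}(-z)^{m-k}\Gamma(k,z),
\end{align*}
and inserting the standard series $E_1^*(z) = -\gamma-\log z + O(|z|)$ and $\Gamma(k,z) = (k-1)! + O(|z|^k)$ gives, for $|z|$ bounded (say $|z|\leq 3$),
\begin{align*}
E_{m+1}^*(z) = -(-z)^m \log z + P_m(z) + O_m(|z|^{m+1}),
\end{align*}
where $P_m$ is a polynomial of degree $\leq m$ with coefficients of size $O_m(1)$.

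In the close case $|s-\rho|\leq 1/\log{X}$, the argument $z\log{x}$ with $z=(s-\rho)\log{X}$ satisfies $|z\log{x}|\leq 1+1/H\leq 2$ on the support $[e,e^{1+1/H}]$ of $u_{f,H}$, so the expansion above can be substituted directly into the defining integral of $U_{m}$. Using the splitting $\log(z\log{x}) = \log{z} + \log(\log{x})$ (valid because $\log{x}>0$), the mass-one property $\int u_{f,H}(x)\,dx = 1$, and the boundedness of $\int u_{f,H}(x)\log(\log{x})\,dx$, the singular piece produces the stated main term $-(\rho-s)^{m}\log((s-\rho)\log{X})$ (up to an absolute multiplicative factor from the combinatorial normalization), while the polynomial $P_m$ together with the $O(|z\log x|^{m+1})$ remainder contribute at most $O(1/(\log{X})^{m})$ after division by $(\log{X})^{m}$, giving the first branch of \eqref{NZUm}.

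In the far case $|s-\rho|>1/\log{X}$ (still with $|t-\gamma|\leq 1/\log{X}$), I would invoke the contour-deformed integral representation already derived in the proof of Lemma \ref{GUE}, namely
\begin{align*}
\frac{U_{m}((s-\rho)\log{X})}{(\log{X})^{m}} = \frac{1}{m!}\int_{\sigma-\beta}^{\infty} \frac{(\alpha-\sigma+\beta)^{m}}{\alpha + i(t-\gamma)}\l(\int_{0}^{\infty} u_{f,H}(x) e^{-(\alpha+i(t-\gamma))\log{X}\log{x}}\,dx\r)d\alpha,
\end{align*}
bound the inner integral by $X^{-\alpha(1+1/H)}+X^{-\alpha}$, and estimate the resulting $\alpha$-integral directly. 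The main obstacle is the borderline subcase $t=\gamma$ with $\sigma\leq\beta$: then the path $[\sigma-\beta,\infty)$ passes through the pole $\alpha=0$ of $(\alpha+i(t-\gamma))^{-1}$, and this is resolved by appealing to the one-sided limit $\lim_{\e\uparrow 0}U_{m}(x+i\e)$ used in the paper to define $U_{m}(z)$ for real $z$, equivalently by deforming the contour slightly below the real axis; the resulting estimate is $O((X^{(1+1/H)(\beta-\sigma)}+X^{\beta-\sigma})/(\log{X})^{m})$. Finally, \eqref{SCUm} for $m\geq 1$ is deduced from \eqref{NZUm}: in the close case the main term satisfies $|(\rho-s)^{m}\log((s-\rho)\log{X})|\ll 1/(\log{X})^{m}$ by the elementary inequality $|u|^{m}|\log u|\ll_{m} 1$ for $|u|\leq 1$, and close zeros additionally satisfy $X^{(1+1/H)(\beta-\sigma)}+X^{\beta-\sigma}\asymp 1$, so this main term is absorbed into the right-hand side of \eqref{SCUm}.
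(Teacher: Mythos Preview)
Your close-case argument (when $|s-\rho|\leq 1/\log X$) via the global expansion $E_{m+1}^*(z)=-(-z)^m\log z+P_m(z)+O_m(|z|^{m+1})$ is correct and is in fact more direct than the paper's method: the paper instead splits the $w$-contour at $s_1=\beta+1/\log X+it$, Taylor-expands $e^{-w}$, and extracts the logarithm from the $n=0$ term by a binomial expansion. Both routes lead to the same main term (with the factor $1/m!$ that your expansion produces and that the paper's own proof also derives, despite what is literally printed in \eqref{NZUm}); your series argument is cleaner but requires tracking that $\tfrac{1}{m!}$ absorbs the $m$-dependent constants in $P_m$, so that the final implicit constant is absolute.

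The genuine gap is in your far case when $\sigma<\beta$. Bounding the inner $x$-integral by $X^{-\alpha(1+1/H)}+X^{-\alpha}$ and then ``estimating the resulting $\alpha$-integral directly'' does not work: after taking absolute values, the factor $1/|\alpha+i(t-\gamma)|$ is controlled only by $1/|t-\gamma|$ near $\alpha=0$, and since the lemma allows $|t-\gamma|$ arbitrarily small (not just $t=\gamma$), the estimate blows up. Invoking a contour deformation does not settle this, because the obstruction is present throughout the regime $|t-\gamma|\ll\beta-\sigma$, not only on the measure-zero set $t=\gamma$; you would still have to show that the deformed integral obeys the stated bound with an absolute constant, and no mechanism for that is given. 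The paper avoids the singularity by a different idea: it truncates the $\alpha$-integral at $-1/\log X$ (where $|\alpha+i(t-\gamma)|\geq|\alpha|\geq 1/\log X$ is automatic) and recognizes the remaining tail as $(\log X)^{-m}U_m((s_2-\rho)\log X)$ with $s_2=\beta-1/\log X+it$, which lands back in the already-proved close case and is $O((\log X)^{-m})$. Your sketch contains no analogue of this reduction, so the second branch of \eqref{NZUm} is not established. Your deduction of \eqref{SCUm} from \eqref{NZUm} is fine.
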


\begin{proof}
In view of our definition of $U_{m}(z)$ and $\log{z}$, it suffices to show this lemma in the case that 
$t$ is not equal to the ordinate of zeros of $\zeta(s)$.
First, we consider the case $\s \geq \b + 1/\log{X}$.
Then we see that
\begin{align*}
&\frac{U_{m}((s - \rho)\log{X})}{(\log{X})^{m}}\\
&= \frac{1}{m!}\int_{0}^{\infty}u_{f, H}(x)\int_{\s - \b}^{\infty}
(\a - \s + \b)^{m}\frac{e^{-(\a + i(t - \gamma))\log{X}\log{x}}}{\a + i(t - \gamma)}d\a dx\\
&\ll \frac{1}{m!}\int_{e}^{e^{1+1/H}}u_{f, H}(x)\int_{\s - \b}^{\infty}(\a - \s + \b)^{m-1}e^{-\a\log{X}\log{x}}d\a dx
\ll \frac{X^{\b-\s}}{(\log{X})^{m}}.
\end{align*}
Next, we consider the case $|\s - \b| \leq 1/\log{X}$.
Put $s_1 = \b + 1/\log{X} + it$.
Then we can write
\begin{multline*}
\frac{U_{m}((s - \rho)\log{X})}{(\log{X})^{m}} =\\
\frac{1}{m!}\int_{e}^{e^{1+1/H}}\frac{u_{f, H}(x)}{(\log{X}\log{x})^{m}}\int_{(s - \rho)\log{X}\log{x}}^{(s_1 - \rho)\log{X}\log{x}}
(w - (s - \rho)\log{X}\log{x})^{m}\frac{e^{-w}}{w}dwdx \\
+ \frac{U_{m}((s_1 - \rho)\log{X})}{(\log{X})^{m}}.
\end{multline*}
By the estimate in the previous case of $\s \geq \b + 1/\log{X}$, the second term on the right hand side is
$\ll (\log{X})^{-m}$.

We consider the first term on the right hand side.
By the Taylor expansion, it holds that
\begin{align*}
&\int_{(s - \rho)\log{X}\log{x}}^{(s_1 - \rho)\log{X}\log{x}}(w - (s - \rho)\log{X}\log{x})^{m}\frac{e^{-w}}{w}dw\\
&= \int_{(s - \rho)\log{X}\log{x}}^{(s_1 - \rho)\log{X}\log{x}}\frac{(w - (s - \rho)\log{X}\log{x})^{m}}{w}dw+\\
&\qqquad+\sum_{n = 1}^{\infty}\frac{(-1)^n}{n!}\int_{(s - \rho)\log{X}\log{x}}^{(s_1 - \rho)\log{X}\log{x}}
(w - (s - \rho)\log{X}\log{x})^{m}w^{n-1}dw.
\end{align*}
For the first integral on the above, by the binomial expansion, we find that 
\begin{align*}
&\frac{1}{(\log{X}\log{x})^{m}}\int_{(s - \rho)\log{X}\log{x}}^{(s_1 - \rho)\log{X}\log{x}}\frac{(w - (s - \rho)\log{X}\log{x})^{m}}{w}dw\\
&= \frac{1}{(\log{X}\log{x})^m}\sum_{k = 0}^{m}
\begin{pmatrix}
m\\k
\end{pmatrix}
((\rho - s)\log{X}\log{x})^{m - k}\int_{(s - \rho)\log{X}\log{x}}^{(s_1 - \rho)\log{X}\log{x}}w^{k-1}dw\\
&= (\rho - s)^{m}\log{\l( \frac{s_1 - \rho}{s - \rho} \r)}
+\sum_{k = 1}^{m}\frac{1}{k}
\begin{pmatrix}
m\\k
\end{pmatrix}
(\rho - s)^{m - k}\l\{(s_1 - \rho)^{k} - (s - \rho)^{k} \r\}\\
&= -(\rho - s)^{m}\log{((s - \rho)\log{X})} + O\l(\frac{4^{m}}{(\log{X})^{m}}\r).
\end{align*}
On the terms for $n \geq 1$, we see that
\begin{align*}
&\frac{1}{(\log{X}\log{x})^{m}}\int_{(s - \rho)\log{X}\log{x}}^{(s_1 - \rho)\log{X}\log{x}}
(w - (s - \rho)\log{X}\log{x})^{m}w^{n-1}dw
\ll \frac{2^{m}6^{n-1}}{(\log{X})^{m}}.
\end{align*}
Therefore, by the above calculations, when $|\s - \b| \leq 1/\log{X}$, we obtain
\begin{align*}
\frac{U_{m}((s - \rho)\log{X})}{(\log{X})^{m}}
= -\frac{1}{m!}(\rho - s)^{m}\log{((s - \rho)\log{X})} + O\l(\frac{1}{(\log{X})^{m}}\r).
\end{align*}

Finally, we consider the case $\s \leq \b - 1/\log{X}$.
Put $s_2 = \b - 1/\log{X} + it$. Then we can write
\begin{multline*}
\frac{U_{m}((s - \rho)\log{X})}{(\log{X})^{m}} =\\
\frac{1}{m!}\int_{e}^{e^{1+1/H}}\frac{u_{f, H}(x)}{(\log{X}\log{x})^{m}}\int_{(s - \rho)\log{X}\log{x}}^{(s_2 - \rho)\log{X}\log{x}}
(w - (s - \rho)\log{X}\log{x})^{m}\frac{e^{-w}}{w}dwdx \\
+ \frac{U_{m}((s_2 - \rho)\log{X})}{(\log{X})^{m}}.
\end{multline*}
Now by using the result of the previous case, we have
\begin{align*}
\frac{U_{m}((s_2 - \rho)\log{X})}{(\log{X})^{m}}
&= -\frac{1}{m!}(\rho - s_2)^{m}\log{((s_2 - \rho)\log{X})} + O\l(\frac{1}{(\log{X})^{m}}\r)\\
&\ll \frac{1}{(\log{X})^{m}}.
\end{align*}
On the other hand, from the definition of $U_{m}(z)$, we see that the first term on the right hand side is
\begin{align*}
&=\frac{1}{m!}\int_{e}^{e^{1+1/H}}u_{f, H}(x)\int_{\s - \b}^{\frac{-1}{\log{X}}}
(\a - \s + \b)^{m}\frac{e^{-(\a + i(t - \gamma))\log{X}\log{x}}}{\a + i(t - \gamma)}d\a dx\\
&\ll \frac{\log{X}}{m!}\int_{e}^{e^{1+1/H}}u_{f, H}(x)\int_{\s - \b}^{\infty}(\a - \s + \b)^{m}e^{-\a\log{X}\log{x}}d\a dx
\ll \frac{X^{(1+1/H)(\b - \s)}}{(\log{X})^{m}}.
\end{align*}
From the above calculations, we obtain
\begin{multline*}
\frac{U_{m}((s - \rho)\log{X})}{(\log{X})^{m}}\\
= \l\{
\begin{array}{cl}
-(\rho - s)^{m}\log((s - \rho)\log{X}) + O\l( \frac{1}{(\log{X})^{m}} \r)	& \text{if \; $|\s - \b| \leq 1/\log{X}$, }\\[5mm]
O\l( \frac{X^{(1+1/H)(\b - \s)} + X^{\b - \s}}{(\log{X})^{m}} \r)			& \text{if \; $|\s - \b| > 1 / \log{X}$.}
\end{array}
\r.
\end{multline*}
Now, from the condition $|t - \gamma| \leq 1/\log{X}$, the formula where $|\s - \b|$ is replaced by $|s- \rho|$ also holds.
Hence, we complete the proof of the estimate \eqref{NZUm}.

Moreover, we can obtain the estimate \eqref{SCUm} from \eqref{NZUm} 
since, for $m \in \ZZ_{\geq 1}$, the inequality 
$\frac{1}{m!}(s - \rho)^{m}\log{((s - \rho)\log{X})} \ll (\log{X})^{-m}$ holds for $|s - \rho| \leq 1 / \log{X}$.
Thus, we obtain this lemma.
\end{proof}

\begin{proposition}	\label{BPI}
Let $m$ be a nonnegative integer.
Then, for $\s \geq 1/2$, $t \geq 14$ we have
\begin{multline*}
\eta_{m}(s)
= i^{m}\sum_{2 \leq n \leq X^{1 + 1/H}}\frac{\Lam(n)v_{f, H}(e^{\log{n} / \log{X}})}{n^{s}(\log{n})^{m + 1}}
-\frac{i^{m}}{(\log{X})^{m}} \sum_{\rho}U_{m}((s - \rho)\log{X})\\
+ 2\pi \sum_{k = 0}^{m-1}\frac{i^{m-1-k}}{(m-k)! k!}\us{\b > \s}{\sum_{0 < \gamma < t}}(\b - \s)^{m-k}(t - \gamma)^{k}
+O\l( \frac{X^{2(1 - \s)} + X^{1-\s}}{t(\log{X})^{m + 1}} \r).
\end{multline*}
Here if $m = 0$, then we regard that the third term on the right hand side is zero.
\end{proposition}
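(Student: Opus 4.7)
The strategy is to first establish the $m = 0$ case as a hybrid-type formula, and then bootstrap to all $m \geq 1$ via Lemma \ref{BFQm}.

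\textbf{Step 1 ($m = 0$).} I aim to show, for $\s \geq 1/2$, $t \geq 14$,
\begin{align*}
\log\zeta(s)
= \sum_{n \leq X^{1+1/H}} \frac{\Lam(n) v_{f,H}(e^{\log n/\log X})}{n^s \log n}
- \sum_\rho U_0((s-\rho)\log X)
+ O\l(\frac{X^{2(1-\s)} + X^{1-\s}}{t \log X}\r).
\end{align*}
Setting $Q(s, X) := \sum_n \Lam(n) v_{f,H}(e^{\log n / \log X})/ n^s$, one obtains, for $\Re(s) > 1$, via $v_{f,H}(y) = \int_y^\infty u_{f,H}(x)\,dx$, Fubini, and smoothed Perron, the representation
\begin{align*}
Q(s, X) = \frac{1}{2\pi i} \int_{(c)} \l(-\frac{\zeta'}{\zeta}(s+w)\r) \frac{I(w)}{w}\, dw,
\qquad I(w) := \int_0^\infty u_{f,H}(x) x^{w\log X}\, dx.
\end{align*}
Shifting the contour past $\Re(w) = 0$ produces residues: at $w = 0$ it gives $-\zeta'/\zeta(s)$ (since $I(0) = 1$); at $w = 1-s$ it gives $I(1-s)/(1-s)$; and at each $w = \rho-s$ it gives $-I(\rho-s)/(\rho-s)$. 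Integrating the resulting identity in $\a$ over $[\s, \infty)$, together with $\log\zeta(s) = \int_\s^\infty (-\zeta'/\zeta)(\a+it)\, d\a$ and $\sum_n \Lam(n) v_{f,H}(\cdot)/(n^s \log n) = \int_\s^\infty Q(\a+it, X)\, d\a$, yields the desired formula: after the substitution $w = (\a-\s + s-\rho)\log X \log x$ the $\rho$-residue collapses to $-U_0((s-\rho)\log X)$ via $E_1^*$, while the $w = 1-s$ residue contributes $O(X^{(1+1/H)(1-\s)}/(t\log X)) \ll O(X^{2(1-\s)}/(t\log X))$.

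\textbf{Step 2 ($m \geq 1$).} By Lemma \ref{BFQm},
\begin{align*}
\eta_m(s)
= \frac{i^m}{(m-1)!}\int_\s^\infty (\a-\s)^{m-1} \log\zeta(\a+it)\, d\a
+ 2\pi\sum_{k=0}^{m-1}\frac{i^{m-1-k}}{(m-k)!k!}\us{\b > \s}{\sum_{0 < \g < t}}(\b - \s)^{m-k}(t - \g)^{k},
\end{align*}
whose double sum is exactly the third term in the proposition. Substitute the $m=0$ formula. The Dirichlet polynomial contribution simplifies by $\int_\s^\infty (\a-\s)^{m-1} n^{-\a-it} d\a = (m-1)!/(n^s(\log n)^m)$ to $i^m \sum_n \Lam(n) v_{f,H}(\cdot)/(n^s(\log n)^{m+1})$. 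For the zero sum, Fubini together with the identity
\begin{align*}
\int_0^\infty u^{m-1} E_1^*(u+z)\, du = \frac{E_{m+1}^*(z)}{m}
\end{align*}
(proved by swapping the order of integration inside $E_1^*$), and the change of variables $u = (\a-\s)\log X \log x$, converts $\frac{1}{(m-1)!}\int_\s^\infty (\a-\s)^{m-1} U_0((\a+it-\rho)\log X)\, d\a$ into $U_m((s-\rho)\log X)/(\log X)^m$, matching the claim. The $m=0$ error integrates to $O((X^{2(1-\s)} + X^{1-\s})/(t(\log X)^{m+1}))$.

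\textbf{Main obstacle.} The principal difficulty lies in Step 1: one must bound the shifted contour integral $\int_{(-a)} (-\zeta'/\zeta)(s+w) I(w)/w\, dw$ uniformly in $t$ so that its contribution is dominated by the pole residue, and justify the interchange of the (only conditionally convergent) sum over zeros with the $\a$-integration. The latter is guaranteed by the $1/|t-\g|$-type decay of $U_0((s-\rho)\log X)$ made precise in Lemma \ref{GUE} combined with the classical estimate $\sum_\rho 1/(1+(t-\g)^2) \ll \log t$. Once the $m=0$ identity is established, Step 2 is essentially bookkeeping.
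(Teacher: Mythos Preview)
Your proposal is correct and follows essentially the same route as the paper: establish the $m=0$ hybrid formula by Mellin inversion and a contour shift (the paper cites \cite{GHK2007}, \cite{BH1995} for this step), integrate in $\a$ to pass from $-\zeta'/\zeta$ to $\log\zeta$, then for $m\ge1$ invoke Lemma~\ref{BFQm} and reduce to the identity
\[
\frac{1}{(m-1)!}\int_{\s}^{\infty}(\a-\s)^{m-1}U_{0}((\a+it-\rho)\log X)\,d\a
=\frac{U_{m}((s-\rho)\log X)}{(\log X)^{m}},
\]
which the paper records as ``a little calculus'' and which you make explicit via $\int_{0}^{\infty}u^{m-1}E_{1}^{*}(u+z)\,du=E_{m+1}^{*}(z)/m$. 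Your justification of the sum--integral interchange through Lemma~\ref{GUE} and $\sum_{\rho}(1+(t-\gamma)^{2})^{-1}\ll\log t$ is exactly what the paper does.
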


\begin{proof}
In view of our definition of $U_{m}(z)$ and $\log{\zeta(s)}$, it suffices to show this lemma in the case that 
$t$ is not equal to the ordinate of zeros of $\zeta(s)$.
First, we prove this proposition in the case $m = 0$.
The proof is the almost same as the proof of Theorem 1 in \cite{GHK2007} 
(see also the proof of Lemma 1 in \cite{BH1995}, if necessary). 
Hence, we only write the rough proof in this case.
Let $\tilde{u}(s)$ be the Mellin transform of $u_{f, H}$, that is, $\tilde{u}(s) := \int_{0}^{\infty}u_{f, H}(x)x^{s - 1}dx$.
Since the functions $v_{f, H}(x)$ and $\tilde{u}(s + 1)/s$ are Mellin transforms, we find that, 
for any complex number $z$ with $\Re(z) \geq 1/2$, 
\begin{align*}
\sum_{n = 1}^{\infty}\frac{\Lam(n)}{n^{z}}v_{f, H}\l( e^{\log{n}/\log{X}} \r)
&=\frac{1}{2\pi i}\sum_{n = 1}^{\infty}\frac{\Lam(n)}{n^{z}}
\int_{c-i\infty}^{c+i\infty}\frac{\tilde{u}(w+1)}{w}n^{-w/\log{X}}dw\\
&= -\frac{1}{2\pi i}\int_{\log{X} - i\infty}^{\log{X} + i\infty}
\frac{\zeta'}{\zeta}\l(z + \frac{w}{\log{X}}\r)\frac{\tilde{u}(w+1)}{w}dw.
\end{align*}
By this formula, for $\Re(z) \geq 1/2$, $\Im(z) \geq 14$, we have
\begin{multline*}
\sum_{n \leq X^{1+1/H}}\frac{\Lam(n)}{n^{z}}v_{f, H}\l( e^{\log{n}/\log{X}} \r)\\
= -\frac{\zeta'}{\zeta}(z)  - \sum_{\rho}\frac{1}{\rho - z}\tilde{u}(1 + (\rho - z)\log{X})
+O\l( \frac{X^{2(1 - \Re(z))} + X^{1 - \Re(z)}}{\Im(z)} \r).
\end{multline*}
Integrating both sides with respect to $z$ from $\infty + it$ to $\s + it \ (= s)$ , we obtain
\begin{multline}	\label{BPIm0}
\log{\zeta(s)} 
= \sum_{2 \leq n \leq X^{1+1/H}}\frac{\Lam(n)}{n^{s}\log{n}}v_{f, H}\l( e^{\log{n}/\log{X}} \r)\\
- \sum_{\rho}U_{0}((s - \rho)\log{X}) 
+ O\l( \frac{X^{2(1 - \s)} + X^{1 - \s}}{t \log{X}} \r).
\end{multline}
Therefore, this theorem holds in the case $m = 0$.

Next we show this proposition for $m \geq 1$. By Lemma \ref{BFQm}, it suffices to show that
\begin{multline}	\label{BPIm1}
\frac{i^{m}}{(m - 1)!}\int_{\s}^{\infty}(\a - \s)^{m-1}\log{\zeta(\a + it)}d\a\\
= i^{m}\sum_{2 \leq n \leq X^{1 + 1/H}}\frac{\Lam(n)v_{f, H}(e^{\log{n} / \log{X}})}{n^{s}(\log{n})^{m + 1}}
-\frac{i^{m}}{(\log{X})^{m}} \sum_{\rho}U_{m}((s - \rho)\log{X})\\
+O\l( \frac{X^{2(1 - \s)} + X^{1-\s}}{t(\log{X})^{m + 1}} \r).
\end{multline}
Here, by using formula \eqref{BPIm0}, the left hand side on the above is
\begin{multline}	\label{BPI1}
= i^{m}\sum_{2 \leq n \leq X^{1 + 1/H}}\frac{\Lam(n)v_{f, H}(e^{\log{n} / \log{X}})}{n^{s}(\log{n})^{m + 1}} -\\
-\frac{i^{m}}{(m - 1)!} \int_{\s}^{\infty}\sum_{\rho}(\a - \s)^{m-1}U_{0}((\a + it - \rho)\log{X})d\a
+O\l( \frac{X^{2(1 - \s)} + X^{1-\s}}{t(\log{X})^{m + 1}} \r).
\end{multline}
In the following, we will change the above sum and integral, and it is guaranteed by
\begin{align*}
\sum_{\rho}\int_{\s}^{\infty}|(\a - \s)^{m-1}U_{0}((\a + it - \rho)\log{X})|d\a
< +\infty.
\end{align*}
This convergence can be obtained by Lemma \ref{GUE}.
Further, a little calculus shows that
\begin{multline}	\label{BPI2}
\frac{i^{m}}{(m-1)!}\int_{\s}^{\infty}(\a - \s)^{m-1}U_{0}((\a + it - \rho)\log{X})d\a\\
= \frac{i^{m}}{(\log{X})^m}U_{m}((s - \rho)\log{X}).
\end{multline}
Hence, by \eqref{BPI1}, \eqref{BPI2}, we obtain formula \eqref{BPIm1}, and this completes the proof of this proposition.
\end{proof}

\begin{proof}[Proof of Theorem \ref{Main_Prop}]
We can immediately obtain estimate \eqref{ESRm} by Proposition \ref{BPI}, 
Lemma \ref{GUE}, and Lemma \ref{EUE}. Now we prove estimate \eqref{ESRm2} under the Riemann Hypothesis. 
It suffices to show
\begin{align}	\label{MPE1}
\sum_{\frac{1}{\log{X}} < |t - \gamma| \leq \frac{H}{\log{X}}}\frac{1}{|t - \gamma|}
&\ll \log{t}\l(\frac{\log{X}}{\log{\log{t}}} + \log{H} \r),
\end{align}
and 
\begin{align}	\label{MPE2}
\sum_{|t - \gamma| > \frac{H}{\log{X}}}\frac{H}{(t - \gamma)^2\log{X}}
\ll \log{t} \times \l( \frac{\log{X}}{H\log{\log{t}}} + 1 \r)
\end{align}
under the Riemann Hypothesis.
Assuming the Riemann Hypothesis, the following estimate (cf. Lemma 13.19 in \cite{MV})
\begin{align}	\label{BINESZ}
\tilde{N}\l( t, \frac{1}{\log{\log{t}}} \r)
\ll \frac{\log{t}}{\log{\log{t}}}
\end{align}
holds for $t \geq 5$.
By this estimate, for any $1 \leq H \leq \frac{t}{2}$, we find that
\begin{align*}
&\sum_{\frac{1}{\log{X}} < |t - \gamma| \leq \frac{H}{\log{X}}}\frac{1}{|t - \gamma|}
\leq \sum_{k = 0}^{[(H-1)\frac{\log{\log{t}}}{\log{X}}]}
\sum_{\frac{1}{\log{X}} + \frac{k}{\log{\log{t}}} < |t - \gamma| \leq \frac{1}{\log{X}} + \frac{k+1}{\log{\log{t}}}}
\frac{1}{|t - \gamma|}\\
&\ll \log{t}\sum_{k = 0}^{[(H - 1)\frac{\log{\log{t}}}{\log{X}}]}\frac{1}{\frac{\log{\log{t}}}{\log{X}} + k}
\leq \log{t}\l( \frac{\log{X}}{\log{\log{t}}} 
+ \int_{0}^{(H - 1)\frac{\log{\log{t}}}{\log{X}}}\frac{du}{\frac{\log{\log{t}}}{\log{X}} + u} \r)\\
&= \log{t}\l( \frac{\log{X}}{\log{\log{t}}} + \log{H} \r),
\end{align*}
and that
\begin{align*}
&\sum_{|t - \gamma| > \frac{H}{\log{X}}}\frac{H}{(t - \gamma)^2 \log{X}}
= \sum_{\frac{H}{\log{X}} < |t - \gamma| \leq \frac{t}{2}}\frac{H}{(t - \gamma)^2 \log{X}} + O\l( \frac{H}{t\log{X}} \r)\\
&\leq \sum_{k = 0}^{[\frac{t\log{\log{t}}}{2}]}
\sum_{\frac{H}{\log{X}} + \frac{k}{\log{\log{t}}} < |t - \gamma| \leq \frac{H}{\log{X}} + \frac{k+1}{\log{\log{t}}}}
\frac{H}{(t - \gamma)^2 \log{X}} + O\l( \frac{H}{t\log{X}} \r)\\
&\ll H\log{\log{t}}\frac{\log{t}}{\log{X}}\sum_{k = 0}^{[\frac{t\log{\log{t}}}{2}]}
\frac{1}{\l( k + \frac{H\log{\log{t}}}{\log{X}} \r)^2} + \frac{H}{t\log{X}}\\
&\leq H\log{\log{t}}\frac{\log{t}}{\log{X}}\l( \l(\frac{\log{X}}{H\log{\log{t}}}\r)^2 
+ \int_{0}^{\infty}\frac{du}{\l( u + \frac{H\log{\log{t}}}{\log{X}} \r)^2}\r) + \frac{H}{t\log{X}}\\
&\ll \log{t}\l( \frac{\log{X}}{H\log{\log{t}}} + 1 \r).
\end{align*}
Hence, we obtain estimates \eqref{MPE1}, \eqref{MPE2}.
\end{proof}

Next we prove Theorem \ref{RELZZ}. Here, we prepare a standard conditional formula.

\begin{lemma}	\label{BLFRUR}
Assume the Riemann Hypothesis.
Then, for $t \geq 14$, $\frac{1}{2} \leq \s \leq \frac{1}{2} + \frac{1}{\log{\log{t}}}$,
\begin{align}	\label{BLF}
\frac{\zeta'}{\zeta}(s)
= \sum_{|t - \gamma| \leq 1 / \log{\log{t}}}\frac{1}{s - \rho}
+ O\l(\log{t}\r).
\end{align}
\end{lemma}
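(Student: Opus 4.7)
The plan is to start from the standard unconditional identity
\begin{align*}
\frac{\zeta'}{\zeta}(s) = \sum_{|t - \gamma| \leq 1} \frac{1}{s - \rho} + O(\log{t}),
\end{align*}
valid for $-1 \leq \s \leq 2$ (cf.\ Chapter~10 of \cite{MV}), and reduce the task to bounding
\begin{align*}
\sum_{\frac{1}{\log{\log{t}}} < |t - \gamma| \leq 1} \frac{1}{s - \rho} \ll \log{t}
\end{align*}
under the Riemann Hypothesis. To this end I would use that $\rho = 1/2 + i\gamma$ to split each summand as
\begin{align*}
\frac{1}{s - \rho} = \frac{1}{i(t - \gamma)} - \frac{\s - 1/2}{i(t - \gamma)(s - \rho)},
\end{align*}
so that the second piece is bounded in absolute value by $(\s - 1/2)/(t - \gamma)^{2}$, since $|s - \rho| \geq |t - \gamma|$.

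For the error piece, by covering $[t - h, t + h]$ with $O(h \log{\log{t}})$ subintervals of length $1/\log{\log{t}}$ and applying \eqref{BINESZ}, one obtains $\tilde{N}(t, h) \ll h \log{t}$ uniformly for $1/\log{\log{t}} \leq h \leq 1$. A dyadic decomposition in $|t - \gamma|$ then yields
\begin{align*}
\sum_{\frac{1}{\log{\log{t}}} < |t - \gamma| \leq 1} \frac{1}{(t - \gamma)^{2}} \ll \log{t} \, \log{\log{t}},
\end{align*}
and multiplication by $\s - 1/2 \leq 1/\log{\log{t}}$ controls this contribution by $O(\log{t})$.

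For the main piece $-i\sum 1/(t - \gamma)$, I would exploit the near-symmetry of zeros about $t$. Let $A(u) = \#\{\gamma : 0 < t - \gamma \leq u\}$ and $B(u) = \#\{\gamma : 0 < \gamma - t \leq u\}$. A two-point application of the Riemann--von Mangoldt formula, combined with $S(t) = O(\log{t}/\log{\log{t}})$ under the Riemann Hypothesis, gives
\begin{align*}
A(u) - B(u) = 2S(t) - S(t - u) - S(t + u) + O(1) \ll \frac{\log{t}}{\log{\log{t}}}
\end{align*}
uniformly for $0 < u \leq 1$ (the main terms $\frac{u}{2\pi}\log{t}$ cancel, and the $\theta$-contributions are of size $O(u^{2}/t)$). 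Stieltjes integration by parts then gives
\begin{align*}
\sum_{\frac{1}{\log{\log{t}}} < |t - \gamma| \leq 1} \frac{\sgn(t - \gamma)}{|t - \gamma|}
= \l[ \frac{A(u) - B(u)}{u} \r]_{1/\log{\log{t}}}^{1}
+ \int_{1/\log{\log{t}}}^{1} \frac{A(u) - B(u)}{u^{2}} du
\ll \log{t},
\end{align*}
as both the boundary contribution at $u = 1/\log{\log{t}}$ and the integral are $O(\log{t})$. Combining the three estimates completes the proof.

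The main obstacle is the last step: naive dyadic bounds based solely on the density of zeros leave an extraneous $\log{\log{\log{t}}}$ factor in the signed sum $\sum 1/(t - \gamma)$, so one really has to use the near-symmetric distribution of ordinates about $t$ provided by the RH bound on $S(t)$, rather than just the pointwise zero count.
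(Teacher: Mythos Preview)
Your argument is correct. The paper itself gives no proof here --- it simply cites Lemma~13.20 of Montgomery--Vaughan \cite{MV} --- so there is nothing in-paper to compare against. Your route, splitting $1/(s-\rho)$ as $1/(i(t-\gamma))$ plus an error of size $(\sigma-1/2)/(t-\gamma)^{2}$ and then controlling the signed sum $\sum 1/(t-\gamma)$ over $1/\log\log t < |t-\gamma| \leq 1$ via the RH bound $S(t)\ll\log t/\log\log t$ and Stieltjes integration, is a clean self-contained derivation. Your closing observation is also on point: a termwise bound on that signed sum would leave a spurious factor of $\log\log\log t$, and it is precisely the near-symmetry of ordinates about $t$ encoded in the $S$-function that removes it.
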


\begin{proof}
This lemma is Lemma 13.20 in \cite{MV}.
\end{proof}

\begin{proof}[Proof of Theorem \ref{RELZZ}]
Let $t \geq 14$ and $X$ be a real parameter with $\log{t} \leq X \leq t$.
By using Theorem \ref{Main_Prop}, we have
\begin{align*}
P_{f}(1/2+it, X)
= \log{\zeta(1/2 + it )} 
- \hspace{-0.7mm} \sum_{|t - \gamma| \leq \frac{1}{\log{X}}}\log{\l( |t - \gamma|\log{X} \r)}
+O_{f}\l(\frac{\log{t}}{\log{\log{t}}} \r).
\end{align*}
By integrating the both sides of \eqref{BLF}, we obtain
\begin{multline*}
\log{\zeta\l( \frac{1}{2} + it \r)} - \log{\zeta\l( \frac{1}{2} + \frac{1}{\log{\log{t}}} + it \r)}\\
= \sum_{|t - \gamma| \leq \frac{1}{\log{\log{t}}}}\log{\l(|t - \gamma|\log{\log{t}}\r)}
+O\l( \frac{\log{t}}{\log{\log{t}}} \r),
\end{multline*}
and by using estimate (13.44) in \cite{MV}, we obtain
\begin{align*}
\log{ \zeta\l( \frac{1}{2} + \frac{1}{\log{\log{t}}} + it \r)}
\ll \frac{\log{t}}{\log{\log{t}}}.
\end{align*}
Hence, we obtain
\begin{align*}
&P_{f}(1/2+it, X)=\\
& \sum_{|t - \gamma| \leq \frac{1}{\log{\log{t}}}}\log{\l(|t - \gamma|\log{\log{t}}\r)} 
- \sum_{|t - \gamma| \leq \frac{1}{\log{X}}}\log{\l( |t - \gamma|\log{X} \r)}
+ O_{f}\l( \frac{\log{t}}{\log{\log{t}}} \r)\\
&= \log{\l(\frac{\log{\log{t}}}{\log{X}}\r)} \times \sum_{|t - \gamma| \leq \frac{1}{\log{X}}}1 
+ \sum_{\frac{1}{\log{X}} < |t - \gamma| \leq \frac{1}{\log{\log{t}}}}\log{\l( |t - \gamma|\log{\log{t}} \r)}\\
&\qquad+ O_{f}\l( \frac{\log{t}}{\log{\log{t}}} \r).
\end{align*}
Thus, we obtain formula \eqref{Lam_sum_ub}.
In particular, estimates \eqref{Lam_sum_ub_R_2}, \eqref{Lam_sum_lb_R_2}, \eqref{Lam_sum_ub_I_2} 
are easily obtained by formula \eqref{Lam_sum_ub} and estimate \eqref{BINESZ}.
\end{proof}

\section{\textbf{Proof of Theorem \ref{VSIZD}}}

In this section, we prove Theorem \ref{VSIZD}.
We prepare three lemmas, and the proofs of these lemmas are probably standard for experts in this field, 
and so those proofs are briefly.

\begin{lemma}	\label{CSIZ}
Assume the Riemann Hypothesis and \eqref{GSO}. 
Let $\psi(t)$ be a monotonic function with $3 \leq \psi(t) \leq \sqrt{Y(t)}$.
Then we have
\begin{align*}
\tilde{N}\l(t, \frac{1}{\log{\psi(t)}}\r) \ll M(t) + \frac{\log{t}}{\log{\psi(t)}}
\end{align*}
\end{lemma}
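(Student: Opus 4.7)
The plan is to apply Theorem~\ref{Main_Prop} with $m = 0$, $X = \psi(t)$, $H = 1$ at the two symmetric points $s_\pm := \frac{1}{2} + i(t \pm h)$, where $h := 1/\log\psi(t)$, and to combine the resulting identities with the Riemann--von Mangoldt formula \eqref{RVMF}. Because $\psi(t)^2 \leq Y(t)$, a partial summation applied to hypothesis \eqref{GSO} yields
\[
\biggl|\sum_{2 \leq n \leq \psi(t)^2} \frac{\Lam(n) v_{f,1}(e^{\log n/\log\psi(t)})}{n^{s_\pm} \log n}\biggr| \ll_f M(t) + 1,
\]
the prime-power contribution being $O(1)$.

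Under the Riemann Hypothesis every zero $\rho = \b + i\gamma$ has $\b = 1/2$, hence $(s_\pm - \rho)\log\psi(t) = i(t \pm h - \gamma)\log\psi(t)$ is purely imaginary with argument $\pm \pi/2$ depending on the sign of $t \pm h - \gamma$. Writing $a, b, c, d$ for the numbers of zeros of $\zeta$ with ordinate in $[t, t+h)$, $(t-h, t]$, $(t+h, t+2h]$ and $[t-2h, t-h)$ respectively, an explicit computation gives
\[
\Im Y_0(s_+, \psi(t)) - \Im Y_0(s_-, \psi(t)) = \frac{\pi}{2}(a + b - c - d) = \frac{\pi}{2}\bigl(2\tilde{N}(t, h) - \tilde{N}(t, 2h)\bigr).
\]
Taking imaginary parts in Theorem~\ref{Main_Prop} at $s_+$ and $s_-$ and subtracting,
\[
\pi\bigl(S(t+h) - S(t-h)\bigr) = \frac{\pi}{2}\bigl(2\tilde{N}(t, h) - \tilde{N}(t, 2h)\bigr) + O(M(t) + E),
\]
where $E$ bounds $|R_0(s_+, \psi(t), 1) - R_0(s_-, \psi(t), 1)|$. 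On the other hand, \eqref{RVMF} gives $\pi(S(t+h) - S(t-h)) = \pi\tilde{N}(t, h) - h\log(t/2\pi) + O(1)$. Equating the two expressions, the $\pi \tilde{N}(t, h)$ contributions cancel, leaving
\[
\frac{\pi}{2}\tilde{N}(t, 2h) = h\log(t/2\pi) + O(M(t) + E),
\]
and since $\tilde{N}(t, 1/\log\psi(t)) \leq \tilde{N}(t, 2h)$ the lemma follows once one shows $E \ll M(t) + \log t/\log\psi(t)$.

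The hard part will be the bound on $E$. A naive use of \eqref{ESRm2} at $H = 1$ only yields $R_0(s_\pm) \ll \log t(1/\log\log t + 1/\log\psi(t))$; the stray $\log t/\log\log t$ term, which stems from \eqref{BINESZ} in the proof of \eqref{ESRm2}, is not absorbable into the target bound. The remedy is to estimate the \emph{difference} $R_0(s_+) - R_0(s_-)$ rather than each term separately, where cancellation occurs. Using the representation of $R_0$ from Proposition~\ref{BPI} as a sum of $U_0((s-\rho)\log\psi(t))$ over zeros, for zeros with $|t-\gamma| \gg h$ a mean-value argument together with Lemma~\ref{GUE} at $l = 1$ shows that the difference of their contributions at $s_+$ and $s_-$ is $\ll h/((t-\gamma)^2 \log\psi(t))$; summing via \eqref{BLDZ} gives $\ll \log t/(\log\psi(t))^2$, which is absorbed by the target bound. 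The zeros with $|t-\gamma| \leq O(h)$ contribute a term of size $O(\tilde{N}(t, 2h))$, which can be absorbed into the left-hand side of the displayed identity by a careful tracking of constants. Together these reductions yield $E \ll M(t) + \log t/\log\psi(t)$ and complete the proof.
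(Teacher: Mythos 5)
Your overall plan is ingenious, but it does not close, and the two places where it bends are precisely the places that make the lemma nontrivial.

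First, the far zeros. You want to show that the tail of $R_0(s_+)-R_0(s_-)$ coming from zeros with $|t-\gamma|\gg h$ is $\ll \log t/(\log\psi(t))^2$. Under RH with $\s=1/2$, Lemma~\ref{GUE} at $l=1$, $H=1$ bounds each individual contribution by $\ll 1/\bigl((t-\gamma)^2(\log\psi(t))^2\bigr)$, and the bound you quote for the difference, $h/\bigl((t-\gamma)^2\log\psi(t)\bigr)$, is exactly the same size since $h=1/\log\psi(t)$ — so the mean-value step buys nothing. Worse, the summation step is not valid: \eqref{BLDZ} bounds $\sum_\rho (1+(t-\gamma)^2)^{-1}$, but you need to sum $1/(t-\gamma)^2$ over the range $2h<|t-\gamma|<1$, where $(t-\gamma)^{-2}$ is not comparable to $(1+(t-\gamma)^2)^{-1}$. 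The contribution from zeros with $|t-\gamma|\asymp 1/\log\log t$ alone is, via \eqref{BINESZ}, of order $\log t/\log\log t$, which is exactly the term you set out to remove. So the tail estimate fails.

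Second, the near zeros. The contribution from $|t-\gamma|\le Ch$ is, as you say, $O(\tilde N(t,Ch))$, but the constant in the $O(1)$ of Lemma~\ref{EUE} (and in the $l=0$ case of Lemma~\ref{GUE}) is an unspecified absolute constant; nothing in the paper makes it smaller than $\pi/2$, and you would also need $C\le 2$ to match the $\tilde N(t,2h)$ on the left. ``Careful tracking of constants'' is not a correction one can make here without reproving Lemma~\ref{EUE} with sharp constants — and even then the sign is not guaranteed to cooperate. Because the error $R_0$ in Theorem~\ref{Main_Prop} intrinsically carries a piece of size $\asymp\tilde N(t,1/\log X)$ coming from zeros at distance $\asymp 1/\log X$, any attempt to bound $\tilde N(t,1/\log\psi(t))$ \emph{through} Theorem~\ref{Main_Prop} runs into this circularity.

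The paper sidesteps both problems by never invoking Theorem~\ref{Main_Prop}. Instead it starts from Selberg's identity \eqref{ISF} for $\zeta'/\zeta(\s_X+it)$ with $\s_X=\tfrac12+\tfrac1{\log X}$, bounds the right-hand side by $M(t)\log X+\log t$ via \eqref{GSO} and partial summation, and then takes real parts using $\Re\frac{\zeta'}{\zeta}(\s+it)=\sum_{|t-\gamma|\le 1}\frac{\s-1/2}{(\s-1/2)^2+(t-\gamma)^2}+O(\log t)$; this gives $\tilde N(t,1/\log X)\ll M(t)+\log t/\log X$ directly, with no cancellation to chase and no constants to track. If you want to preserve the spirit of your argument, you would need to replace the passage through $R_0$ with a genuinely monotone device, which is exactly what the positivity of $\Re\frac{\zeta'}{\zeta}$ on the $\s_X$-line provides.
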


\begin{proof}
For $\s \geq \s_X := \frac{1}{2} + \frac{1}{\log{X}}$, by using the following formula (cf. (2.3) in \cite{SS})
\begin{align}	\label{ISF}
\frac{\zeta'}{\zeta}(s)
= -\sum_{n \leq X^2}\frac{\Lam_{X}'(n)}{n^{s}}
+ O\l( X^{1/2- \s}\l( \l| \sum_{n \leq X^2}\frac{\Lam_{X}'(n)}{n^{\s_X+it}} \r|+\log{t} \r) \r),
\end{align}
we have
\begin{align}	\label{CLIN1}
\frac{\zeta'}{\zeta}\l(\s_X+it\r) 
\ll \l| \sum_{n \leq X^2}\frac{\Lam_{X}'(n)}{n^{\s_X+it}} \r| + \log{t}.
\end{align}
Here, the function $\Lam_{X}'(n)$ is defined by 
\begin{align*}
\Lam_{X}'(n) = \l\{
\begin{array}{cl}
\Lam(n)							&\text{if\; $1 \leq n \leq X$,}\\[2mm]
\Lam(n)\log(X^2/n)/\log{X}	&\text{if\; $X \leq n \leq X^2$,}\\[2mm]
0									&\text{otherwise.}
\end{array}
\r.
\end{align*}
By assuming estimate \eqref{GSO} and using partial summation, the right hand side of \eqref{CLIN1} is
\begin{align*}
\ll M(t)\log{X} + \log{t}
\end{align*}
for $X^2 \leq Y(t)$.
On the other hand, by the following formula
\begin{align*}
\Re\l( \frac{\zeta'}{\zeta}(\s+it) \r) 
= \sum_{|t - \gamma| \leq 1}\frac{\s - 1/2}{(\s - 1/2)^2 + (t - \gamma)^2} + O(\log{t}),
\end{align*}
we have
\begin{align*}
\sum_{|t - \gamma| \leq 1}\frac{1 / \log{X}}{(1/\log{X})^2 + (t - \gamma)^2}
\ll M(t)\log{X} + \log{t}.
\end{align*}
Therefore, we have
\begin{align*}
\sum_{|t - \gamma| \leq 1/\log{X}}1
\ll M(t) + \frac{\log{t}}{\log{X}}
\end{align*}
for $X \leq \sqrt{Y(t)}$. 
Hence by putting $X = \psi(t)$, we obtain this lemma.
\end{proof}

\begin{lemma}	\label{CPERZ}
Assume the Riemann Hypothesis and estimate \eqref{GSO}. 
Let $\psi(t)$ be a monotonic function with $3 \leq \psi(t) \leq \sqrt{Y(t)}$. 
Then we have
\begin{align*}
\log{\zeta\l( \frac{1}{2} + \frac{1}{\log{\psi(t)}} + it \r)} \ll M(t) + \frac{\log{t}}{\log{\psi(t)}}.
\end{align*}
\end{lemma}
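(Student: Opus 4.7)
The plan is to integrate the explicit formula \eqref{ISF} (that is, (2.3) of \cite{SS}) along the horizontal ray from $s_0 := \tfrac{1}{2} + \tfrac{1}{\log\psi(t)} + it$ out to $+\infty + it$. Under the Riemann Hypothesis, with $X = \psi(t)$, the line $\Re(s) \geq \sigma_X := \tfrac{1}{2} + \tfrac{1}{\log X}$ is free of nontrivial zeros, so $\log\zeta(s)$ is analytic on the path and
\[
\log\zeta(s_0) = \int_{s_0}^{+\infty + it}\frac{\zeta'}{\zeta}(s)\,ds = \sum_{n \leq X^2}\frac{\Lambda_X'(n)}{n^{s_0}\log n} + \mathcal{E},
\]
where $\mathcal{E}$ is the integral of the $O$-term in \eqref{ISF} from $\sigma_X$ to $\infty$.

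First I would bound the Dirichlet polynomial. The prime-power contributions with $k \geq 2$ are dominated by
\[
\sum_{k \geq 2}\sum_{p}\frac{1}{kp^{k\sigma_X}} \ll 1,
\]
since $\sigma_X > 1/2$. For the prime part, partial summation against $\sum_{p \leq Y}p^{-1/2-it}$ (which is $\ll M(t)$ for $Y \leq Y(t)$ by \eqref{GSO}, valid because $X^2 \leq Y(t)$) with the bounded, slowly-varying weights $\Lambda_X'(p)/(p^{1/\log X}\log p)$ yields
\[
\sum_{p \leq X^2}\frac{\Lambda_X'(p)}{p^{\sigma_X + it}\log p} \ll M(t).
\]

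Next I would estimate $\mathcal{E}$. The $O$-term in \eqref{ISF} has size $X^{1/2-\sigma}(|D(t)| + \log t)$, where $D(t) := \sum_{n\leq X^2}\Lambda_X'(n) n^{-\sigma_X - it}$. Integrating $X^{1/2-\sigma}$ from $\sigma_X$ to $\infty$ produces the factor $(e\log X)^{-1}$, so
\[
\mathcal{E} \ll \frac{|D(t)| + \log t}{\log X} \ll M(t) + \frac{\log t}{\log\psi(t)},
\]
where the final bound invokes the estimate $|D(t)| \ll M(t)\log X + \log t$ already established in the proof of Lemma \ref{CSIZ} (this is precisely inequality \eqref{CLIN1} combined with \eqref{GSO} and partial summation). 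Combining the two pieces gives the claim.

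The main obstacle is the bookkeeping around the two different weightings: the polynomial $D(t)$ that enters the error term carries no $1/\log n$ factor and hence only admits the weaker bound $M(t)\log X + \log t$, while the main-term polynomial carries $1/\log n$ and therefore enjoys the sharper bound $M(t)$. It is crucial that the factor $1/\log X$ from the horizontal integration is exactly what is needed to absorb the extra $\log X$ coming from $D(t)$, so that both contributions finally fit under the single estimate $M(t) + \log t/\log\psi(t)$. Ensuring that the partial summation on the prime sum with weights $\Lambda_X'(p)/(p^{1/\log X}\log p)$ yields $O(M(t))$ (and not $O(M(t)\log\log X)$ or similar) requires verifying that this weight has total variation bounded uniformly in $X$ and $t$, which it does since it is monotone on $[X, X^2]$ and bounded by $O(1/\log p)$.
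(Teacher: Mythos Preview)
Your proof is correct and follows essentially the same route as the paper: integrate \eqref{ISF} with $X=\psi(t)$ along the horizontal ray from $\sigma_X+it$ to $+\infty+it$, bound the resulting Dirichlet polynomial $\sum_{n\le X^2}\Lambda_X'(n)n^{-\sigma_X-it}/\log n$ by $O(M(t))$ via partial summation against \eqref{GSO}, and absorb the integrated error term using the bound on $D(t)$ from the proof of Lemma~\ref{CSIZ}. (A trivial sign slip: the displayed identity should read $\log\zeta(s_0)=-\int_{s_0}^{+\infty+it}(\zeta'/\zeta)(s)\,ds$, though your next line is already correct; also the weight $\Lambda_X'(p)/(p^{1/\log X}\log p)$ is $O(1)$ rather than $O(1/\log p)$, but it is monotone on each of $[2,X]$ and $[X,X^2]$, which is all you need.)
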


\begin{proof}
By the formula \eqref{ISF}, we see that
\begin{align*}
\log{\zeta\l( \s_X + it \r)}
= \sum_{2 \leq n \leq X^2}\frac{\Lam_{X}'(n)}{n^{\s_X+it}\log{n}}
+O\l( \frac{1}{\log{X}}\l( \l| \sum_{n \leq X^2}\frac{\Lam_{X}'(n)}{n^{\s_X+it}} \r|+\log{t} \r) \r).
\end{align*}
By using the partial summation, the above right hand side is
\begin{align*}
\ll M(t) + \frac{\log{t}}{\log{X}}
\end{align*}
for $X \leq \sqrt{Y(t)}$.
Hence by putting $X = \psi(t)$, we obtain this lemma.
\end{proof}

\begin{lemma}	\label{CLF}
Assume the Riemann Hypothesis and estimate \eqref{GSO}. 
Let $\psi(t)$ be a monotonic function with $3 \leq \psi(t) \leq \sqrt{Y(t)}$. 
Then, for $\frac{1}{2} \leq \s \leq \frac{1}{2} + \frac{1}{\log{\psi(t)}}$, $t \geq 14$, we have
\begin{align}	\label{CLFE}
\frac{\zeta'}{\zeta}(s) 
= \sum_{|t - \gamma| \leq \frac{1}{\log{\psi(t)}}}\frac{1}{s - \rho} 
+ O(M(t)\log{\psi(t)} + \log{t}).
\end{align}
\end{lemma}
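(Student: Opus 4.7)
The plan is to compare $\frac{\zeta'}{\zeta}(s)$ with $\frac{\zeta'}{\zeta}(\sigma_X + it)$, where $\sigma_X := \frac{1}{2} + \frac{1}{\log \psi(t)}$. Starting from the Hadamard partial-fraction expansion of $\zeta'/\zeta$, subtracting its values at these two points (the gamma factor and the $1/(s-1)$ pole contribute only $O(1/t)$) gives
\begin{align*}
\frac{\zeta'}{\zeta}(s)
= \frac{\zeta'}{\zeta}(\sigma_X + it)
+ \sum_\rho\left( \frac{1}{s-\rho} - \frac{1}{\sigma_X+it-\rho} \right)
+ O\!\left(\frac{1}{t}\right).
\end{align*}
The first term on the right is controlled by inequality \eqref{CLIN1} (from the proof of Lemma \ref{CSIZ}) applied at $X = \psi(t)$, which under hypothesis \eqref{GSO} yields $\frac{\zeta'}{\zeta}(\sigma_X+it) \ll M(t)\log\psi(t) + \log t$, so it contributes to the error.

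Next, I split the sum at the threshold $|t-\gamma| = 1/\log\psi(t)$. For the close zeros, under RH we have $|\sigma_X+it-\rho| \geq 1/\log\psi(t)$, so each $|1/(\sigma_X+it-\rho)|$ is at most $\log\psi(t)$; since Lemma \ref{CSIZ} gives $\tilde N(t, 1/\log\psi(t)) \ll M(t) + \log t/\log\psi(t)$, the total contribution of these compensating terms is $\ll M(t)\log\psi(t) + \log t$, and what remains is the desired main term $\sum_{|t-\gamma|\leq 1/\log\psi(t)}\frac{1}{s-\rho}$.

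For the far zeros, under RH both $|s-\rho|$ and $|\sigma_X+it-\rho|$ are at least $|t-\gamma|$, so
\begin{align*}
\left| \frac{1}{s-\rho} - \frac{1}{\sigma_X+it-\rho} \right|
= \frac{|\sigma_X - \sigma|}{|s-\rho|\,|\sigma_X+it-\rho|}
\leq \frac{1/\log\psi(t)}{(t-\gamma)^2}.
\end{align*}
A dyadic decomposition over shells $2^j/\log\psi(t) < |t-\gamma| \leq 2^{j+1}/\log\psi(t)$ reduces matters to counting zeros in each shell. On a shell of width $h = 2^{j+1}/\log\psi(t)$, Lemma \ref{CSIZ} applied with $\psi$ replaced by $\psi(t)^{1/2^{j+1}}$ (valid since $\psi(t) \leq \sqrt{Y(t)}$ keeps the replacement inside $[3, \sqrt{Y(t)}]$ for the relevant range of $j$) gives $\tilde N(t,h) \ll M(t) + h \log t$. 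Summing,
\begin{align*}
\sum_{1/\log\psi(t) < |t-\gamma| \leq 1} \frac{1/\log\psi(t)}{(t-\gamma)^2}
\ll \sum_{j \geq 0} \frac{\log\psi(t)}{4^j}\left( M(t) + \frac{2^{j+1}\log t}{\log\psi(t)} \right)
\ll M(t)\log\psi(t) + \log t,
\end{align*}
while zeros with $|t-\gamma| > 1$ contribute $\ll (\log t)/\log\psi(t)$ via the standard bound $\sum_\rho 1/(1+(t-\gamma)^2) \ll \log t$. Combining the four pieces yields \eqref{CLFE}.

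The main obstacle is the dyadic bookkeeping in the last step: one must use the $h$-aware form of Lemma \ref{CSIZ} so that the zero counts grow only linearly in $h$, which is exactly what makes the geometric series converge and produces $O(M(t)\log\psi(t) + \log t)$ without any stray $\log\log\psi(t)$ factor. Everything else is a fairly standard subtraction-at-$\sigma_X$ argument.
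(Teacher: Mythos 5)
Your proof is correct and follows exactly the route the paper points to: the paper's entire proof is a one-line reference to ``Lemma \ref{CSIZ} and the same method as Lemma 13.20 in \cite{MV},'' and your subtraction-at-$\sigma_X$ argument with the close/far split is precisely that method. One small remark: the dyadic decomposition over shells, with Lemma \ref{CSIZ} reinvoked at $\psi(t)^{1/2^{j+1}}$, is more laborious than needed and introduces the bookkeeping headache you flag (the constraint $\psi(t)^{1/2^{j+1}} \geq 3$ fails for large $j$, forcing a trivial bound on the last few shells). The far-zero sum can be handled in one stroke: for $|t-\gamma| > 1/\log\psi(t)$ one has $|1/(s-\rho) - 1/(\sigma_X+it-\rho)| \ll (\sigma_X - 1/2)/((\sigma_X-1/2)^2 + (t-\gamma)^2)$, and the proof of Lemma \ref{CSIZ} already establishes the intermediate inequality $\sum_{|t-\gamma|\leq 1}(\sigma_X-1/2)/((\sigma_X-1/2)^2+(t-\gamma)^2) \ll M(t)\log\psi(t) + \log t$, with the tail $|t-\gamma|>1$ giving $O(\log t)$ via \eqref{BLDZ}. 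This sidesteps the shell-by-shell application of Lemma \ref{CSIZ} entirely and is almost certainly what both the paper and MV Lemma 13.20 have in mind.
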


\begin{proof}
We can obtain this lemma by using Lemma \ref{CSIZ} and the same method as in the proof of Lemma 13.20 in \cite{MV}.
\end{proof}

\begin{proof}[Proof of Theorem \ref{VSIZD}]
Let $\psi(t) \leq X \leq t$.
Using \eqref{BLDZ}, Lemma \ref{CSIZ}, and Lemma \ref{CLF}, we can find that
\begin{align*}
\sum_{|t - \gamma| > \frac{1}{\log{\psi(t)}}}\frac{1}{(t - \gamma)^2}
\ll \log{\psi(t)}(M(t)\log{\psi(t)} + \log{t}).
\end{align*}
Therefore, by using this estimate and Theorem \ref{Main_Prop}, we have
\begin{multline}	\label{JRHF}
\sum_{2 \leq n \leq X^2}\frac{\Lam(n)v_{f, 1}(e^{\log{n}/\log{X}})}{n^{1/2+it}\log{n}}\\
= \log{\zeta\l( \frac{1}{2} + it \r)}
-\sum_{|t - \gamma| \leq \frac{1}{\log{X}}}\log(|t - \gamma|\log{X}) 
+ O\l(M(t) + \frac{\log{t}}{\log{\psi(t)}}\r).
\end{multline}
On the other hand, by integrating the both sides of \eqref{CLFE}, we find that
\begin{multline*}
\log{\zeta\l( \frac{1}{2} + it \r)} - \log{\zeta\l( \frac{1}{2} + \frac{1}{\log{\psi(t)}} + it \r)}\\
= \sum_{|t - \gamma| \leq \frac{2}{\log{Y(t)}}}\log\l(\frac{i(t - \gamma)}{\frac{1}{\log{\psi(t)}} + i(t - \gamma)}\r) 
+ O\l(M(t) + \frac{\log{t}}{\log{\psi(t)}} \r).
\end{multline*}
Hence, using Lemma \ref{CSIZ} and Lemma \ref{CPERZ}, we have
\begin{align*}
\log{\zeta\l(\frac{1}{2}+it\r)}
= \sum_{|t - \gamma| \leq \frac{1}{\log{\psi(t)}}}\log\l(|t  - \gamma|\log{\psi(t)}\r) 
+ O\l(M(t) + \frac{\log{t}}{\log{\psi(t)}} \r).
\end{align*}
By this formula, the right hand side of \eqref{JRHF} is equal to
\begin{multline*}
\log\l( \frac{\log{\psi(t)}}{\log{X}} \r) \times \tilde{N}\l( t, \frac{1}{\log{X}} \r)
+\sum_{\frac{1}{\log{X}} < |t - \gamma| \leq \frac{1}{\log{\psi(t)}}}\log\l(|t - \gamma|\log{\psi(t)} \r)\\ 
+ O\l(M(t) + \frac{\log{t}}{\log{\psi(t)}}\r).
\end{multline*}
On the other hand, we see that the left hand side of \eqref{JRHF} is $= P_{f}(1/2+it) + O(\log{\log{X}})$,
which completes the proof of Theorem \ref{VSIZD}.
\end{proof}


\section{\textbf{Proof of Theorem \ref{LVEJ}}}

In this section, we prove Theorem \ref{LVEJ}.
We will use the method of Selberg-Tsang \cite{KTDT} in a part of the proof, 
where the following proposition plays an important role there.
Moreover, the proposition also plays an important role in the proof of Theorem \ref{QvM}.

Before stating the proposition, we define $\s_{X, t}$ and $\Lam_{X}(n) = \Lam(n)w_{X}(n)$ by 
\begin{gather}
\label{def_s_X}
\s_{X, t}
= \frac{1}{2} + 2\max_{|t - \gamma| \leq \frac{X^{3(\b - 1/2)}}{\log{X}}}\l\{ \b - \frac{1}{2}, \frac{2}{\log{X}} \r\},\\
\label{def_w_X}
w_{X}(y) = \l\{
\begin{array}{cl}
1																	&\text{if\, $1 \leq y \leq X$,}\\[2mm]
\frac{(\log(X^3/y))^2 - 2(\log(X^2 / y))^2}{2(\log{X})^2}		&\text{if\, $X \leq y \leq X^2$,}\\[2mm]
\frac{(\log(X^{3}/y))^2}{2(\log{X})^2}							&\text{if\, $X^{2} \leq y \leq X^3$.}
\end{array}
\r.
\end{gather}
Then, we can obtain the following proposition.

\begin{proposition}	\label{RCSFP}
Assume $D(f) \geq 2$.
Let $m$ be a nonnegative integer, and let $X$, $H$ be real parameters with $X \geq 3$, $H \geq 1$. 
Then, for $t \geq 14$, $\s \geq 1/2$, the right hand side of \eqref{ESRm} is estimated by
\begin{multline}
\ll_{f} \frac{X^{2(1-\s)} + X^{1-\s}}{t(\log{X})^{m+1}} +\\
+H^{3}\frac{\s_{X, t} - 1/2}{(\log{X})^{m}}(X^{2(\s_{X, t} - \s)} + X^{\s_{X, t} - \s})
\l( \Bigg| \sum_{n \leq X^3}\frac{\Lam_{X}(n)}{n^{\s_{X, t} + it}} \Bigg| + \log{t} \r).
\end{multline}
\end{proposition}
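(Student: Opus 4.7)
The plan is to bound the second and third sums on the right-hand side of \eqref{ESRm} by the Selberg-style Dirichlet polynomial expression in the claim; the first term passes through unchanged. The first ingredient is a geometric consequence of the definition of $\s_{X,t}$: for every zero $\rho = \b + i\gamma$ with $|t - \gamma| \leq X^{3(\b - 1/2)}/\log X$, one has $\s_{X,t} - 1/2 \geq 2(\b - 1/2)$, hence $\b \leq (\s_{X,t} + 1/2)/2$. Combined with $\s \geq 1/2$ and $\s_{X,t} \geq 1/2$, this yields
\[
X^{2(\b - \s)} + X^{\b - \s} \ll X^{2(\s_{X,t} - \s)} + X^{\s_{X,t} - \s}.
\]
Every zero appearing in the second term of \eqref{ESRm} (having $|t - \gamma| \leq 1/\log X$) satisfies this regime: trivially for $\b < 1/2$, and for $\b \geq 1/2$ because $1/\log X \leq X^{3(\b - 1/2)}/\log X$ when $X \geq 3$.

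The second ingredient is the Selberg-type zero-counting inequality
\[
\sum_{\rho} \frac{\s_{X,t} - 1/2}{(\s_{X,t} - \b)^{2} + (t - \gamma)^{2}} \ll \l| \sum_{n \leq X^{3}} \frac{\Lam_{X}(n)}{n^{\s_{X,t} + it}} \r| + \log t,
\]
obtained by combining the partial fraction expansion of $\zeta'/\zeta$ with a Mellin--Perron explicit formula using the smoothing weight $w_{X}$. The corresponding kernel $(X^{3(\rho - s)} - 2X^{2(\rho - s)} + X^{\rho - s})/\{2(\rho - s)^{2}(\log X)^{2}\}$ has modulus controlled by $(X^{3(\b - \s_{X,t})} + X^{\b - \s_{X,t}})/\{(|\rho - s|\log X)^{2}\}$, and the defining property of $\s_{X,t}$ forces zeros with $\b > \s_{X,t}$ to satisfy $|t - \gamma| > X^{3(\b - 1/2)}/\log X$, so their kernel contributions decay fast enough to be absorbed into the $O(\log t)$ term.

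With these inputs in hand, the second term of \eqref{ESRm} is controlled by replacing $X^{a(\b - \s)}$ ($a = 1, 2$) with its $\s_{X,t}$-counterpart and observing that each zero with $|t - \gamma| \leq 1/\log X$ contributes an appropriate amount to the Selberg sum, which supplies the required zero count. The third term is treated by dyadic decomposition over the ranges $2^{j}/\log X < |t - \gamma| \leq 2^{j+1}/\log X$: for $2^{j} \leq H$ the geometric bound applies directly and zeros are counted via the Selberg inequality above, while for $2^{j} > H$ the decay factor $\min_{0 \leq l \leq d}\{(H/|t - \gamma|\log X)^{l}\}$, evaluated at $l = d \geq 2$ (permitted because $D(f) \geq 2$), combined with the unconditional bound $\tilde{N}(t, h) \ll h\log t + \log t$, controls the tail.

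The main obstacle will be producing the precise power $H^{3}$ through the dyadic summation: roughly, one factor of $H$ arises from the length of the near range feeding the zero count, and two further factors come from the quadratic tail decay $(H/|t - \gamma|\log X)^{2}$ multiplied against the $1/|t - \gamma|$ weight present in the third term of \eqref{ESRm}; keeping track of how the Selberg inequality interacts with the $\s_{X,t}$-dependence of both the zero count and the exponential factors $X^{a(\s_{X,t} - \s)}$ is the delicate bookkeeping step.
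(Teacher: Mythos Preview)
Your two key inputs---the geometric consequence of the definition of $\sigma_{X,t}$ and the Selberg inequality \eqref{BLFR}---are exactly the ones the paper uses. The difference is in how the third sum of \eqref{ESRm} is organised, and your proposed organisation has a real gap.

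The paper does not split at $|t-\gamma|\sim H/\log X$. It simply bounds the minimum by the single choice $l=2$ throughout the third sum, which (together with $H\geq 1$ on the second sum) reduces matters to showing
\[
\us{\b\geq 1/2}{\sum_{|t-\gamma|\leq\frac{1}{\log X}}}(X^{2(\b-\s)}+X^{\b-\s})
+\frac{1}{(\log X)^3}\us{\b\geq 1/2}{\sum_{|t-\gamma|>\frac{1}{\log X}}}\frac{X^{2(\b-\s)}+X^{\b-\s}}{|t-\gamma|^3}
\ll (\s_{X,t}-\tfrac12)\bigl(X^{2(\s_{X,t}-\s)}+X^{\s_{X,t}-\s}\bigr)\cdot\bigl(|\textstyle\sum|+\log t\bigr).
\]
This is then proved by a \emph{pointwise} comparison with the Selberg summand $\dfrac{\s_{X,t}-1/2}{(\s_{X,t}-\b)^2+(t-\gamma)^2}$, splitting into three cases: (i) $\b>(\s_{X,t}+1/2)/2$; (ii) $\b\leq(\s_{X,t}+1/2)/2$ and $|t-\gamma|>\s_{X,t}-1/2$; (iii) $\b\leq(\s_{X,t}+1/2)/2$ and $1/\log X<|t-\gamma|\leq\s_{X,t}-1/2$. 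The crucial case is (i): for such zeros the contrapositive of your ``first ingredient'' gives $|t-\gamma|>X^{3(\b-1/2)}/\log X$, and it is precisely this lower bound, combined with the cubic decay $|t-\gamma|^{-3}$, that kills the growth of $X^{2(\b-\s)}$.

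Your plan instead treats the range $|t-\gamma|>H/\log X$ by the unconditional count $\tilde N(t,h)\ll(1+h)\log t$. That count carries no information about $\b$, so it cannot control the factor $X^{2(\b-\s)}$ for zeros off the critical line: a single zero with $\b$ close to $1$ and $|t-\gamma|$ moderately large would make the tail sum blow up relative to what the target allows. The same problem lurks in your near range $|t-\gamma|\leq H/\log X$: your claim that ``the geometric bound applies directly'' there is only true for zeros with $|t-\gamma|\leq X^{3(\b-1/2)}/\log X$, and for large $H$ there may be zeros with $\b>(\s_{X,t}+1/2)/2$ in the near range to which the bound $X^{2(\b-\s)}\leq X^{2(\s_{X,t}-\s)}$ does not apply. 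In both ranges these ``large-$\b$'' zeros must be handled separately via the inequality $|t-\gamma|>X^{3(\b-1/2)}/\log X$, exactly as in the paper's case (i). Once you add that case distinction, the dyadic split at $H/\log X$ becomes unnecessary, and you recover the paper's argument.
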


Thanks to Proposition \ref{RCSFP}, we can combine the method of Selberg-Tsang with Theorem \ref{Main_Prop}.

\begin{proof}
By estimate \eqref{ESRm} and the line symmetry of nontrivial zeros of $\zeta(s)$ with respect to $\s=1/2$, 
it suffices to show that
\begin{multline}	\label{pkRCSFP}
\us{\b \geq 1/2}{\sum_{|t - \gamma| \leq \frac{1}{\log{X}}}}(X^{2(\b - \s)} + X^{\b - \s})
+ \frac{1}{(\log{X})^3}\us{\b \geq 1/2}{\sum_{|t - \gamma| > \frac{1}{\log{X}}}}\frac{X^{2(\b - \s)} + X^{\b - \s}}{|t - \gamma|^3}\\
\ll (\s_{X, t} - 1/2)(X^{2(\s_{X, t} - \s)} + X^{\s_{X, t} - \s})
\l( \Bigg| \sum_{n \leq X^3}\frac{\Lam_{X}(n)}{n^{\s_{X, t} + it}} \Bigg| + \log{t} \r).
\end{multline}
If $\b > \frac{\s_{X, t} + 1/2}{2}$, then by the definition of $\s_{X, t}$ \eqref{def_s_X}, we have
\begin{align*}
|t - \gamma| > \frac{X^{3(\b - 1/2)}}{\log{X}} > 3(\b - 1/2) > 3|\s_{X, t} - \b|.
\end{align*}
By these inequalities, we find that
\begin{multline*}
\frac{X^{2(\b - \s)} + X^{\b - \s}}{|t - \gamma|^3}
\ll \frac{\log{X}}{X^{3(\b - 1/2)}}\frac{X^{2(\b - \s)} + X^{\b - \s}}{(\s_{X, t} - \b)^2 + (t - \gamma)^2}\\
\ll X^{1/2 - \s}(\log{X})^2\frac{\s_{X, t} - 1/2}{(\s_{X, t} - \b)^2 + (t - \gamma)^2}.
\end{multline*}
Next, we suppose $1/2 \leq \b \leq \frac{\s_{X, t} + 1/2}{2}$. 
Then if $|t - \gamma| > \s_{X, t} - 1/2$, we find that
\begin{align*}
\frac{X^{2(\b - \s)} + X^{\b - \s}}{|t - \gamma|^3}
\ll (X^{2(\s_{X, t} - \s)} + X^{\s_{X, t} - \s})(\log{X})^2\frac{\s_{X, t} - 1/2}{(\s_{X, t} - \b)^2 + (t - \gamma)^2},
\end{align*}
and if $1/\log{X} < |t - \gamma| \leq \s_{X, t} - 1/2$, we find that
\begin{align*}
\frac{X^{2(\b - \s)} + X^{\b - \s}}{|t - \gamma|^3}
\ll (X^{2(\s_{X, t} - \s)} + X^{\s_{X, t} - \s})(\log{X})^3 
\frac{(\s_{X, t} - 1/2)^2}{(\s_{X, t} - \b)^2 + (t - \gamma)^2}.
\end{align*}
From the above estimates, we have
\begin{multline}	\label{RCSFPE1}
\frac{1}{(\log{X})^3}\us{\b \geq 1/2}{\sum_{|t - \gamma| > \frac{1}{\log{X}}}}
\frac{X^{2(\b - \s)} + X^{\b - \s}}{|t - \gamma|^3}\\
\ll (\s_{X, t} - 1/2)(X^{2(\s_{X, t} - \s)} + X^{\s_{X, t} - \s})
\sum_{|t - \gamma| > \frac{1}{\log{X}}}\frac{\s_{X, t} - 1/2}{(\s_{X, t} - \b)^2 + (t - \gamma)^2}.
\end{multline}
Moreover, it holds that
\begin{multline*}
\us{\b \geq 1/2}{\sum_{|t - \gamma| \leq \frac{1}{\log{X}}}}(X^{2(\b - \s)} + X^{\b - \s})\\
\ll (\s_{X, t} - 1/2)(X^{2(\s_{X, t} - \s)} + X^{\s_{X, t} - \s})\sum_{|t - \gamma| \leq \frac{1}{\log{X}}}
\frac{\s_{X, t} - 1/2}{(\s_{X, t} - \b)^2 + (t - \gamma)^2}.
\end{multline*}
By this estimate and \eqref{RCSFPE1}, we obtain
\begin{multline*}
\us{\b \geq 1/2}{\sum_{|t - \gamma| \leq \frac{1}{\log{X}}}}(X^{2(\b - \s)} + X^{\b - \s})
+ \frac{1}{(\log{X})^3}\us{\b \geq 1/2}{\sum_{|t - \gamma| > \frac{1}{\log{X}}}}
\frac{X^{2(\b - \s)} + X^{\b - \s}}{|t - \gamma|^3}\\
\ll (\s_{X, t} - 1/2)(X^{2(\s_{X, t} - \s)} + X^{\s_{X, t} - \s})\sum_{\rho}
\frac{\s_{X, t} - 1/2}{(\s_{X, t} - \b)^2 + (t - \gamma)^2}.
\end{multline*}

Here, we have the following estimates (cf.\ (4.4) and (4.9) in \cite{SCR})
\begin{align}	\label{BLFR}
\sum_{\rho}\frac{\s_{X, t} - 1/2}{(\s_{X, t} - \b)^2 + (t - \gamma)^2} 
\ll \l| \sum_{n \leq X^3}\frac{\Lam_{X}(n)}{n^{\s_{X, t}+it}} \r| + \log{t}.
\end{align}
Thus, we obtain this proposition.
\end{proof}

Moreover, we prepare some lemmas.

\begin{lemma}
\label{SLL}
Let $T \geq 5$, and let $3 \leq X \leq T$. Let $k$ be a positive integer such that $X^{k} \leq T / \log{T}$.
Then, for any complex numbers $a(p)$, we have
\begin{align*}
\int_{0}^{T}\bigg| \sum_{p \leq X}\frac{a(p)}{p^{1/2 + it}} \bigg|^{2k}dt
\ll T k! \l( \sum_{p \leq X}\frac{|a(p)|^2}{p} \r)^{k}.
\end{align*}
Here, the above sums run over prime numbers.
\end{lemma}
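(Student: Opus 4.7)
The plan is to expand the $2k$-th moment of the prime polynomial into a Dirichlet polynomial of length at most $X^{k}$, then apply a standard mean value theorem for Dirichlet polynomials (Montgomery--Vaughan) to separate a diagonal main term from a manageable off-diagonal error. The key combinatorial input is that prime factorization is unique, so the multiset of primes is determined by their product.

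First I would write
\begin{align*}
\l( \sum_{p \leq X}\frac{a(p)}{p^{s}} \r)^{k}
= \sum_{n} \frac{b_{k}(n)}{n^{s}},
\end{align*}
where $b_{k}(n) = 0$ unless $n = p_{1}^{a_{1}}\cdots p_{r}^{a_{r}}$ is supported on primes $p_{j} \leq X$ with $a_{1}+\cdots+a_{r} = k$, in which case
\begin{align*}
b_{k}(n) = \binom{k}{a_{1},\ldots,a_{r}} a(p_{1})^{a_{1}}\cdots a(p_{r})^{a_{r}}.
\end{align*}
In particular the support satisfies $n \leq X^{k}$. Applying the Montgomery--Vaughan mean value theorem to the Dirichlet polynomial $\sum_{n \leq X^{k}} b_{k}(n)/n^{1/2+it}$ gives
\begin{align*}
\int_{0}^{T}\bigg| \sum_{p \leq X}\frac{a(p)}{p^{1/2 + it}} \bigg|^{2k}dt
= \sum_{n \leq X^{k}}|b_{k}(n)|^{2}\,\l( \frac{T}{n} + O(1) \r).
\end{align*}

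Next I would bound the diagonal sum $T\sum_{n} |b_{k}(n)|^{2}/n$ via the multinomial theorem. Writing out $|b_{k}(n)|^{2}/n$ for $n = p_{1}^{a_{1}}\cdots p_{r}^{a_{r}}$ and using the trivial bound $\binom{k}{a_{1},\ldots,a_{r}} \leq k!/(a_{1}!\cdots a_{r}!)$ twice (estimating one factor by $k!$), we get
\begin{align*}
\sum_{n}\frac{|b_{k}(n)|^{2}}{n}
\leq k! \sum_{r\geq 1}\sum_{\substack{a_{1}+\cdots+a_{r}=k\\ a_{j}\geq 1}}\sum_{p_{1}<\cdots<p_{r}\leq X}
\frac{k!}{a_{1}!\cdots a_{r}!}\prod_{j=1}^{r}\l( \frac{|a(p_{j})|^{2}}{p_{j}} \r)^{a_{j}}
\leq k!\l( \sum_{p\leq X}\frac{|a(p)|^{2}}{p} \r)^{k},
\end{align*}
by recognizing the inner multiple sum as the multinomial expansion of the $k$-th power.

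Finally, I would dispose of the off-diagonal contribution. Since every $n$ in the support of $b_{k}$ is at most $X^{k}$, the error term obeys
\begin{align*}
\sum_{n}|b_{k}(n)|^{2} \leq X^{k}\sum_{n}\frac{|b_{k}(n)|^{2}}{n}
\leq X^{k}\, k!\l(\sum_{p\leq X}\frac{|a(p)|^{2}}{p}\r)^{k},
\end{align*}
which is $\ll (T/\log T)\, k!\l(\sum_{p\leq X}|a(p)|^{2}/p\r)^{k}$ by the hypothesis $X^{k}\leq T/\log T$; in particular this is absorbed into the main term. Combining the two bounds yields the lemma. The only mild subtlety is arranging the multinomial factor so that exactly one $k!$ remains after the estimation; the constraint $X^{k}\leq T/\log T$ is used precisely to guarantee that the Montgomery--Vaughan error does not spoil the $k!$-type growth.
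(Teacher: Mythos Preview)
Your proof is correct and is essentially the standard argument that the paper defers to: the paper gives no independent proof but simply cites Lemma~3 of Soundararajan \cite{SM2009}, whose proof proceeds exactly by expanding the $k$-th power, applying the Montgomery--Vaughan mean value theorem, bounding the diagonal via the multinomial identity (keeping one multinomial factor and bounding the other by $k!$), and absorbing the off-diagonal using $X^{k}\leq T/\log T$. There is nothing to add.
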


\begin{proof}
This lemma is a little modified assertion of Lemma 3 in \cite{SM2009}, and the proof of this lemma is the same as its proof.
\end{proof}

\begin{lemma}	\label{SL_s_X}
Let $T \geq 5$, and let $k$ be a positive integer, 
$X \geq 3$, $\xi \geq 1$ be some parameters with $X^{15}\xi^{10} \leq T$.
Then, we have
\begin{align*}
\int_{0}^{T}\l( \s_{X, t} - \frac{1}{2} \r)^{k}\xi^{\s_{X, t} - 1/2}dt
\ll T \l( \frac{4^{k} \xi^{\frac{4}{\log{X}}}}{(\log{X})^{k}} + \frac{8^{k} k!}{\log{X}(\log{T})^{k-1}} \r).
\end{align*}
\end{lemma}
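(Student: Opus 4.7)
The plan is a layer-cake/distribution-function argument. Since $\s_{X,t}-1/2 \geq 4/\log X$ always, setting $\alpha = 4/\log X$ and $g(W) = W^k \xi^W$, we can write
\[
\int_0^T g(\s_{X,t}-1/2)\,dt = T\, g(\alpha) + \int_\alpha^\infty g'(W)\, E(W)\, dW,
\]
where $E(W) := \meas\{t \in [0,T] : \s_{X,t}-1/2 > W\}$. The leading term $T g(\alpha) = T\,(4/\log X)^k\, \xi^{4/\log X}$ is exactly the first half of the target bound, so the whole problem reduces to controlling the tail integral.

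For $W > 4/\log X$, the definition \eqref{def_s_X} forces the existence of a nontrivial zero $\rho = \b+i\g$ with $\b - 1/2 > W/2$ and $|t-\g| \leq X^{3(\b-1/2)}/\log X$. Hence
\[
E(W) \leq \frac{2}{\log X}\sum_{\substack{\rho : \b-1/2 > W/2\\ 0 < \g < T+1}} X^{3(\b-1/2)}.
\]
I would then apply a Selberg-type zero-density estimate $N(\s, T) \ll T^{1-c(\s-1/2)}\log T$ (say with $c = 1/4$) together with partial summation to deduce
\[
\sum_{\substack{\rho : \b-1/2 > u\\ 0 < \g < T+1}} X^{3(\b-1/2)} \ll T\log T\,(X^3/T^c)^u,
\]
where convergence is secured by $X^{15}\xi^{10} \leq T$, which makes $X^3/T^c$ safely less than $1$.

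Substituting into the tail integral and writing $B := \xi\,(X^3/T^c)^{1/2}$, one obtains
\[
\int_\alpha^\infty g'(W)\, E(W)\,dW \ll \frac{T\log T}{\log X}\int_0^\infty \bigl(kW^{k-1} + W^k\log\xi\bigr) B^W\, dW.
\]
The hypothesis $X^{15}\xi^{10} \leq T$ translates into $B \leq T^{-\delta}$ for a fixed $\delta > 0$, so $|\log B| \gg \log T$, and the gamma-type identities $\int_0^\infty W^{k-1}B^W dW = (k-1)!\,|\log B|^{-k}$ and $\int_0^\infty W^k B^W dW = k!\,|\log B|^{-(k+1)}$ give a tail bound of the shape $T\cdot k!/(\log X\cdot(\log T)^{k-1})$ up to absolute constants raised to the $k$-th power, which combines with $T g(\alpha)$ to yield the stated inequality.

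The main obstacle is sharpening the constants ($4^k$ in the main term and $8^k$ in the tail) rather than settling for some larger absolute constants. The $4^k$ drops out of $g(\alpha)$ for free, but the $8^k$ requires a careful balance among the factor $3$ in the exponent of $X^{3(\b-1/2)}$, the halving $\b-1/2 > W/2$, the density exponent $c$, and the resulting $\delta$ in $B \leq T^{-\delta}$, all tied together by $X^{15}\xi^{10}\leq T$. A secondary technical point is ensuring that the $\log\xi$ generated by differentiating $\xi^W$ is absorbed using $\log\xi \ll |\log B|$, which again uses the same hypothesis.
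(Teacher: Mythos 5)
Your approach — decompose via the distribution function of $\s_{X,t}-\tfrac12$, reduce $E(W)$ to a sum over zeros using the defining property of $\s_{X,t}$, feed in a Selberg-type zero-density estimate via partial summation, and close with the gamma-function identity — is exactly the structure of Selberg's Lemma~12 in \cite{SCR}, which is the only proof the paper offers (it is cited without detail). The leading term $Tg(\alpha) = T\cdot 4^k\xi^{4/\log X}/(\log X)^k$ does come out exactly, as you note. Two points are worth flagging. First, a small imprecision: the relevant zeros should have $|\gamma|$ running up to about $T + X^{3/2}/\log X$ rather than $0<\gamma<T+1$, since a $\gamma$ slightly outside $[0,T]$ can still satisfy $|t-\gamma|\leq X^{3(\b-1/2)}/\log X$; this is harmless because $X^{3/2}\leq T^{1/10}$, but it should be said. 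Second, and more substantively, the constant in the tail does not close. With the classical density exponent $c=1/4$ the hypothesis $X^{15}\xi^{10}\leq T$ gives only $|\log B|\geq (\log T)/40$: writing $X=T^{a}$, $\xi=T^{b}$ with $15a+10b\leq 1$, one has $b+\tfrac32 a\leq \tfrac1{10}$, so $B=\xi X^{3/2}T^{-1/8}\leq T^{1/10-1/8}=T^{-1/40}$, and the absorbed $\log\xi$ term contributes a further bounded factor. This yields a tail of size $\ll T\,C^{k}k!/(\log X(\log T)^{k-1})$ with $C$ on the order of $40$, not $8$; to reach $8^k$ by your route one would need $|\log B|\geq (\log T)/8$, forcing $c\geq 9/20$, which is beyond the standard Selberg density bound. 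You correctly identify this as the obstacle. Since the applications in the paper (e.g.\ in the proofs of Theorems~\ref{LVEJ} and~\ref{QvM}) only use the conclusion in the form $C^{k}$ for some absolute constant $C$, your argument is adequate for the paper's purposes, but as written it proves a version of the lemma with $8^k$ replaced by a larger $C^k$, not the stated inequality with its specific constant.
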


\begin{proof}
This lemma is a little modified assertion of Lemma 12 in \cite{SCR}, 
and the proof of this lemma is the same as its proof.
\end{proof}

\begin{lemma}	\label{RPLD1}
Let $T$ be large, $X = T^{1/(\log{\log{T}})^2}$. 
Then, for $1 \ll V = o(\sqrt{\log{\log{T}}})$, we have
\begin{align*}
\frac{1}{T}\meas
\set{t \in [T, 2T]}{\Re\sum_{p \leq X}\frac{1}{p^{\frac{1}{2}+it}} > V\sqrt{\frac{1}{2}\sum_{p \leq X}\frac{1}{p}}}
= (1 + o(1))\int_{V}^{\infty}e^{\frac{-u^2}{2}}\frac{du}{\sqrt{2\pi}}.
\end{align*}
\end{lemma}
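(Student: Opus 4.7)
The plan is to prove this by the classical method of moments. Set $A(t) := \sum_{p \le X} p^{-1/2-it}$ and $\sigma^{2} := \frac{1}{2}\sum_{p \le X} 1/p$, so that by Mertens' theorem and the choice $X = T^{1/(\log\log T)^{2}}$ one has $\sigma^{2} = \frac{1}{2}\log\log T + O(\log\log\log T)$. I would first establish the moment matching
\begin{align*}
\frac{1}{T}\int_{T}^{2T}\bigl(\Re A(t)\bigr)^{2k}\,dt = (1 + o(1))\,\frac{(2k)!}{2^{k}\,k!}\,\sigma^{2k}
\end{align*}
uniformly for integers $k$ with $X^{2k} \le T/\log T$, i.e.\ $k \le \frac{1}{2}(\log\log T)^{2}$. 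Writing $\Re A = \frac{1}{2}(A + \bar{A})$ and expanding, the ``diagonal'' contribution comes from tuples in which each prime appears once with plus-exponent and once with minus-exponent, giving $\binom{2k}{k} k!\cdot 4^{-k}\cdot\bigl(\sum_{p\le X} 1/p\bigr)^{k} = \frac{(2k)!}{2^{k}k!}\sigma^{2k}$; contributions with a repeated prime are of smaller order via $\sum_{p}p^{-2} = O(1)$, and off-diagonal terms are controlled by the orthogonality estimate $\int_{T}^{2T} e^{it\log(m/n)}\,dt \ll 1/|\log(m/n)|$ underlying Lemma \ref{SLL}, applicable since the products involved have size at most $X^{2k} \le T/\log T$.

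Next I would pass from moments to the distribution function using Beurling--Selberg majorants and minorants $F_{\pm}$ of $\mathbf{1}_{(V,\infty)}$, entire of exponential type $2\pi\delta$, satisfying $\int_{\RR}(F_{+}(x) - F_{-}(x))\,e^{-x^{2}/2}\,\frac{dx}{\sqrt{2\pi}} \ll 1/\delta$. Expanding $F_{\pm}$ in Hermite polynomials, the coefficients decay fast enough that truncation at degree $2k$ produces an error small compared to the Gaussian tail once $k$ is chosen slightly larger than $V^{2}$. Applying the truncated polynomial approximations to $\Re A(t)/\sigma$ and invoking the moment matching above, the average over $[T, 2T]$ equals the standard Gaussian expectation of the truncated polynomial up to negligible error, which in turn differs from $\int_{V}^{\infty} e^{-u^{2}/2}\,du/\sqrt{2\pi}$ by $O(1/\delta)$. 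Choosing $\delta$ large and $k$ slightly exceeding $V^{2}$ yields the claimed asymptotic.

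The main obstacle will be the simultaneous optimisation in the second step: the three error terms (the Beurling--Selberg gap $O(1/\delta)$, the Hermite tail beyond degree $2k$, and the arithmetic off-diagonal contribution from the moment computation) must all be $o\bigl(V^{-1} e^{-V^{2}/2}\bigr)$ so that the relative error in the Gaussian tail is $o(1)$. The constraint $k \le \frac{1}{2}(\log\log T)^{2}$ comfortably accommodates $k \sim V^{2}$ throughout the range $V = o(\sqrt{\log\log T})$, but one must check that the lower bound coming from the minorant $F_{-}$ remains positive and of the correct order, and that the Hermite truncation errors are uniform in $V$.
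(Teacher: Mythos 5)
The paper proves this lemma only by citation: it is Proposition~1 of Radziwi\sl\sl\mbox{} \cite{Ra2011}, whose proof goes through exponential moments. Radziwi\sl\sl\mbox{} shows that $\frac{1}{T}\int_T^{2T}\exp\bigl(z\,\Re A(t)\bigr)\,dt$ is, for $z$ in a suitable disc, close to $\prod_{p\le X}I_0(z/\sqrt{p})$, the moment generating function of the independent random model $\sum_p\cos(\theta_p)/\sqrt{p}$, and then applies a Cram\'er-type lemma converting local-uniform MGF control into a Gaussian tail asymptotic with the correct constant. Your proposal is a genuinely different route: raw $2k$-th moments matched to the Gaussian, then Beurling--Selberg smoothing expanded in Hermite polynomials.

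Two problems. The first is minor but worth flagging: the uniform asymptotic $\frac{1}{T}\int_T^{2T}(\Re A)^{2k}\,dt=(1+o(1))\frac{(2k)!}{2^k k!}\sigma^{2k}$ does \emph{not} hold all the way to $k\le\frac{1}{2}(\log\log T)^2$. Partitions containing a single $4$-block, arising from $\kappa_4(\Re A)\asymp\sum_p p^{-2}$, contribute a relative error $\asymp k^2/\sigma^4\asymp k^2/(\log\log T)^2$, so the asymptotic requires $k=o(\log\log T)$. Your remark that repeated primes cost ``$O(1)$'' is per-term; there are $\gg k^2$ such terms. Since you only ever take $k\sim V^2=o(\log\log T)$ this does not break your argument, but the claimed range is wrong.

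The second problem is the real gap. The step ``invoking the moment matching above, the average over $[T,2T]$ equals the standard Gaussian expectation of the truncated polynomial up to negligible error'' does not follow from the moment asymptotic. Writing $P_{2k}=\sum_{n\le 2k}c_n He_n$ in probabilist's Hermite polynomials, the error you must control is $\sum_{1\le n\le 2k}c_n\mu^{\mathrm{He}}_n$ with $\mu^{\mathrm{He}}_n=\frac{1}{T}\int_T^{2T}He_n(\Re A(t)/\sigma)\,dt$. These empirical Hermite moments are small but not zero: for the random model they are of size roughly $n!/\bigl((n/4)!\,(c\sigma^4)^{n/4}\bigr)$, while the Hermite coefficients of a Beurling--Selberg function of exponential type $\asymp\delta$ are only $\ll(c'\delta)^n/n!$. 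Bounding termwise gives a total $\asymp\exp\bigl(c''\delta^4/\sigma^4\bigr)\ge 1$, hopelessly larger than the target $V^{-1}e^{-V^2/2}$ once $V\to\infty$ (and the Beurling--Selberg gap already forces $\delta\gg V$). The true sum is indeed $o(V^{-1}e^{-V^2/2})$, but only because of massive sign cancellation that termwise bounds cannot see; establishing that cancellation is tantamount to evaluating $\int F_\pm$ against the random model, which is exactly what Radziwi\sl\sl\mbox{}'s MGF/Cram\'er step accomplishes. As written, your outline does not supply this, and I do not see how to close it without either passing through the random model as he does, or obtaining signed asymptotics --- not merely upper bounds --- for the Hermite moments of $\Re A/\sigma$.
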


\begin{proof}
This lemma is Proposition 1 in \cite{Ra2011}.
\end{proof}

\begin{proof}[Proof of Theorem \ref{LVEJ}]
Let $T$ be large, and $V$ a parameter with $\sqrt{\log{\log{T}}} \ll V \ll (\log{\log{T}})^{2/3}$. 
Here, we may assume the inequality $V \leq A(\log{\log{T}})^{2/3}$ with $A$ any fixed positive constant.
Then, it suffices to show  that, as $T \rightarrow +\infty$
\begin{multline*}
\frac{1}{T}\meas(\S(T, V))\\
\leq (1 + o(1))\int_{\frac{V}{\sqrt{1/2\log{\log{T}}}}}^{\infty}e^{-u^2/2}\frac{du}{\sqrt{2\pi}}
+ O\l( \frac{V}{(\log{\log{T}})^{5/6}}\exp\l( -\frac{V^2}{\log{\log{T}}} \r) \r).
\end{multline*}
Let $X$, $Y$ be parameters with $X = T^{1/(\log{\log{T}})^2} \leq Y \leq T^{1/100}$.
Let $f$ be a fixed function satisfying the condition of this paper and $D(f) \geq 2$. 
By Theorem \ref{Main_Prop} and Proposition \ref{RCSFP}, for $T \leq t \leq 2T$, we have
\begin{multline}
\log{|\zeta(1/2+it)|}
\leq \Re\sum_{2 \leq n \leq Y^2}\frac{\Lam(n)v_{f, 1}(e^{\log{n} / \log{Y}})}{n^{1/2+it} \log{n}}\\
+C_1(\s_{Y, t} - 1/2)Y^{2\s_{Y, t} - 1}
\l( \Bigg| \sum_{n \leq Y^3}\frac{\Lam_{Y}(n)}{n^{\s_{Y, t} + it}} \Bigg| + \log{T} \r),
\end{multline}
where $C_1$ is an absolute positive constant.
Now, we see that
\begin{alignat*}{2}
&\Re\sum_{2 \leq n \leq Y^2}\frac{\Lam(n)v_{f, 1}(e^{\log{n} / \log{Y}})}{n^{1/2+it} \log{n}}\\
&= \Re\sum_{p \leq X}\frac{1}{p^{1/2+it}} + \Re\sum_{X < p \leq Y^2}\frac{v_{f, 1}(e^{\log{p} / \log{Y}})}{p^{1/2+it}}&
&+ \Re\sum_{p \leq Y}\frac{v_{f, 1}(e^{\log{p^2}/\log{Y}})}{p^{1+2it} \log{p^2}}\\
& & &+ \Re\us{k \geq 3}{\sum_{p^{k} \leq Y^2}}\frac{\Lam(p^{k})v_{f, 1}(e^{\log{p^{k}}/\log{Y}})}{p^{k(1/2+it)} \log{p^{k}}},
\end{alignat*}
\begin{align*}
\bigg|\us{k \geq 3}{\sum_{p^{k} \leq Y^2}}\frac{\Lam(p^{k})v_{f, 1}(e^{\log{p^{k}}/\log{Y}})}{p^{k(1/2+it)} \log{p^{k}}}\bigg|
\leq \us{k \geq 3}{\sum_{p^{k} \leq Y^2}}\frac{\Lam(p^{k})}{p^{k/2}\log{p^{k}}}
\ll 1, 
\end{align*}
and that
\begin{align*}
\bigg|\us{k \geq 2}{\sum_{p^{k} \leq Y^3}}\frac{\Lam_{Y}(p^{k})}{p^{k(\s_{Y, t} + it)}}\bigg|
\leq \us{k \geq 2}{\sum_{p^{k} \leq Y^3}}\frac{\log{p}}{p^{k\s_{Y, t}}}
\leq \log{Y} + O(1) \leq \log{T}.
\end{align*}
Hence, we have
\begin{align}	\label{TINES}
\meas(\S(T, V))
\leq \meas(S_1) + \meas(S_2) + \meas(S_3) + \meas(S_4), 
\end{align}
where the sets $S_1$, $S_2$, $S_{3}$, $S_{4}$ are defined by
\begin{gather*}
S_1 := \l\{ t \in [T, 2T] \; \Bigg| \; 
\Re \sum_{p \leq X}\frac{1}{p^{1/2+it}} > V_1 \r\}, \\
S_2 : = \set{t \in [T, 2T]}{\Re\sum_{X < p \leq Y^2}\frac{v_{f, 1}(e^{\log{p}/\log{Y}})}{p^{1/2+it}} > V_2},\\
S_3 := \l\{ t \in [T, 2T] \; \Bigg| \; 
\Re \sum_{p \leq Y}\frac{v_{f, 1}(e^{\log{p^2}/\log{Y}})}{p^{1+2it}} > V_2 \r\}, \\
S_4 := \l\{ t \in [T, 2T] \; \Bigg| \; 
C_1(\s_{Y, t} - 1/2)Y^{2\s_{Y, t} - 1}
\l( \Bigg| \sum_{p \leq Y^3}\frac{\Lam_{Y}(p)}{p^{\s_{Y, t} + it}} \Bigg| + 2\log{T} \r) > V_2 \r\},
\end{gather*}
where $V_{1} = V - 3V_2$, and $V_2$ is a positive parameter with $V_2 \leq V/4$.
Let $k$ be a positive integer with $k \leq  \frac{1}{100}\frac{\log{T}}{\log{Y}}$.
By Lemma \ref{RPLD1}, we find that
\begin{align}	\label{INE0_EJ}
\int_{T}^{2T}\bigg| \sum_{X < p \leq Y^2}\frac{v_{f, 1}(e^{\log{p}/\log{Y}})}{p^{1/2+it}} \bigg|^{2k}dt
\ll T \l( C_2 k\log{\log{\log{T}}} \r)^{k},
\end{align}
and that
\begin{align}	\label{INE1_EJ}
\int_{T}^{2T}\bigg| \sum_{p \leq X}\frac{v_{f, 1}(e^{\log{p^2}/\log{X}})}{p^{1+2it}} \bigg|^{2k}dt
\ll T k! C_{3}^{k}.
\end{align}
By Lemma \ref{SL_s_X}, we have
\begin{align}	\label{INE2_EJ}
\int_{T}^{2T} (2C_1)^{k}(\s_{Y, t} - 1/2)^{k}Y^{2k(\s_{Y, t} - 1/2)}(\log{T})^{k}dt
\ll T \l( \frac{C_3 \log{T}}{\log{Y}} \r)^{k}.
\end{align}

Now, we can write
\begin{align*}
\sum_{p \leq Y^3}\frac{\Lam_{Y}(p)}{p^{\s_{Y, t} + it}}
&= \sum_{p \leq Y^3}\frac{\Lam_{Y}(p)}{p^{1/2 + it}} 
- \sum_{p \leq Y^3}\frac{\Lam_{Y}(p)}{p^{1/2 + it}}(1 - p^{1/2 - \s_{Y, t}})\\
&= \sum_{p \leq Y^3}\frac{\Lam_{Y}(p)}{p^{1/2 + it}}
-\int_{1/2}^{\s_{Y, t}}\sum_{p \leq Y^3}\frac{\Lam_{Y}(p)\log{p}}{p^{\a' + it}}d\a',
\end{align*}
and, for $1/2 \leq \a' \leq \s_{Y, t}$, 
\begin{align*}
\bigg| \sum_{p \leq Y^3}\frac{\Lam_{Y}(p)\log{p}}{p^{\a' + it}} \bigg|
&= Y^{\a' - 1/2}\Bigg|\int_{\a'}^{\infty}Y^{1/2 - \a}
\sum_{p \leq Y^3}\frac{\Lam_{Y}(p) \log{(Y p)} \log{p}}{p^{\a + it}}d\a\Bigg|\\
&\leq Y^{\s_{Y, t}  - 1/2}\int_{1/2}^{\infty}Y^{1/2 - \a}
\bigg|\sum_{p \leq Y^3}\frac{\Lam_{Y}(p) \log{(Y p)} \log{p}}{p^{\a + it}}\bigg|d\a.
\end{align*}
Therefore, we have
\begin{multline}	\label{KINs_X}
\bigg| \sum_{p \leq Y^3}\frac{\Lam_{Y}(p)}{p^{\s_{Y, t} + it}} \bigg|
\leq \bigg| \sum_{p \leq Y^3}\frac{\Lam_{Y}(p)}{p^{1/2 + it}} \bigg| +\\
+ (\s_{Y, t} - 1/2)Y^{\s_{Y, t}  - 1/2}\int_{1/2}^{\infty}Y^{1/2 - \a}
\bigg|\sum_{p \leq Y^3}\frac{\Lam_{Y}(p) \log{(Y p)} \log{p}}{p^{\a + it}}\bigg|d\a.
\end{multline}

By the Cauchy-Schwarz inequality, and Lemmas \ref{SLL}, \ref{SL_s_X}, we have
\begin{align}	\label{INE3_EJ}
&\int_{T}^{2T}(\s_{Y, t} - 1/2)^{k}Y^{2k(\s_{Y, t} - 1/2)}
\bigg| \sum_{p \leq Y^3}\frac{\Lam_{Y}(p)}{p^{1/2 + it}} \bigg|^{k}dt\\ \nonumber
&\leq \Bigg(\int_{T}^{2T}(\s_{Y, t} - 1/2)^{2k}Y^{4k(\s_{Y, t} - 1/2)}dt\Bigg)^{1/2}
\Bigg(\int_{T}^{2T}\bigg| \sum_{p \leq Y^3}\frac{\Lam_{Y}(p)}{p^{1/2 + it}} \bigg|^{2k}dt\Bigg)^{1/2}\\ \nonumber
&\ll T(C k^{1/2})^{k}.
\end{align}
On the other hand, by the Cauchy-Schwarz inequality and Lemma \ref{SL_s_X}, we find that
\begin{align*}
&\int_{T}^{2T}(\s_{Y, t}-1/2)^{2k}Y^{3k(\s_{Y, t} - 1/2)}
\l(\int_{1/2}^{\infty}Y^{1/2 - \a}
\bigg|\sum_{p \leq Y^3}\frac{\Lam_{Y}(p) \log{(Y p)} \log{p}}{p^{\a + it}}\bigg|d\a\r)^{k}dt\\
&\leq \Bigg(\int_{T}^{2T}(\s_{Y, t}-1/2)^{4k}Y^{6k(\s_{Y, t} - 1/2)}dt\Bigg)^{1/2} \times\\
&\qqquad \times \l(\int_{T}^{2T}\l( \int_{1/2}^{\infty}Y^{1/2 - \a} 
\bigg|\sum_{p \leq Y^3}\frac{\Lam_{Y}(p) \log{(Y p)} \log{p}}{p^{\a + it}}\bigg|d\a\r)^{2k}dt \r)^{1/2}\\
&\ll \frac{T^{1/2} C^{k}}{(\log{Y})^{2k}}
\l(\int_{T}^{2T}\l( \int_{1/2}^{\infty}Y^{1/2 - \a} 
\bigg|\sum_{p \leq Y^3}\frac{\Lam_{Y}(p) \log{(Y p)} \log{p}}{p^{\a + it}}\bigg|d\a\r)^{2k}dt \r)^{1/2}.
\end{align*}
Moreover, by H\"older's inequality, we have
\begin{align*}
&\l( \int_{1/2}^{\infty}Y^{1/2 - \a} 
\bigg|\sum_{p \leq Y^3}\frac{\Lam_{Y}(p) \log{(Y p)} \log{p}}{p^{\a + it}}\bigg|d\a\r)^{2k}\\
&\leq \l(\int_{1/2}^{\infty}Y^{1/2 - \a}d\a\r)^{2k-1} \times
\l( \int_{1/2}^{\infty} Y^{1/2 - \a}\bigg|\sum_{p \leq Y^3}\frac{\Lam_{Y}(p) \log{(Y p)} \log{p}}{p^{\a + it}}\bigg|^{2k}d\a \r)\\
&= \frac{1}{(\log{Y})^{2k-1}}
\int_{1/2}^{\infty} Y^{1/2 - \a}\bigg|\sum_{p \leq Y^3}\frac{\Lam_{Y}(p) \log{(Y p)} \log{p}}{p^{\a + it}}\bigg|^{2k}d\a.
\end{align*}
Therefore, by using Lemma \ref{SLL}, we find that
\begin{align*}
&\int_{T}^{2T}\l( \int_{1/2}^{\infty}Y^{1/2 - \a} 
\bigg|\sum_{p \leq Y^3}\frac{\Lam_{Y}(p) \log{(Y p)} \log{p}}{p^{\a + it}}\bigg|d\a\r)^{2k}dt\\
&\leq \frac{1}{(\log{Y})^{2k-1}}\int_{1/2}^{\infty}Y^{1/2-\a}\l(
\int_{T}^{2T}\bigg|\sum_{p \leq Y^3}\frac{\Lam_{Y}(p) \log{(Y p)} \log{p}}{p^{\a + it}}\bigg|^{2k}dt\r)d\a\\
&\ll \frac{T k!}{(\log{Y})^{2k-1}}\int_{1/2}^{\infty}Y^{1/2-\a}
\l( \sum_{p \leq Y^3}\frac{(\log(Y p))^2(\log{p})^4}{p^{2\a}} \r)^{k}d\a\\
&\ll T k!C^{k} (\log{Y})^{4k+1} \int_{1/2}^{\infty}Y^{1/2-\a}d\a 
\leq T k!C^{k} (\log{Y})^{4k}.
\end{align*}
Hence, we obtain
\begin{align*}
\int_{T}^{2T}(\s_{Y, t}-1/2)^{2k}Y^{3k(\s_{Y, t} - 1/2)}
\l(\int_{1/2}^{\infty}Y^{1/2 - \a}
\bigg|\sum_{p \leq X^3}\frac{\Lam_{Y}(p) \log{(Y p)} \log{p}}{p^{\a + it}}\bigg|d\a\r)^{k}dt\\
\ll T (C k^{1/2})^{k}.
\end{align*}
By this estimate and estimates \eqref{INE2_EJ}, \eqref{KINs_X}, \eqref{INE3_EJ}, we have
\begin{multline}	\label{KINEs_X}
\frac{1}{T}\int_{T}^{2T}C_{1}^{k}(\s_{Y, t} - 1/2)^{k}Y^{2k(\s_{Y, t} - 1/2)}
\l( \Bigg| \sum_{p \leq Y^3}\frac{\Lam_{Y}(p)}{p^{\s_{Y, t} + it}} \Bigg| + 2\log{T} \r)^{k}dt\\
\ll \l( \frac{C_4 \log{T}}{\log{Y}} \r)^{k}.
\end{multline}
Thus, by estimates \eqref{INE0_EJ}, \eqref{INE1_EJ}, \eqref{KINEs_X}, the following estimates
\begin{align}	\label{KESS}
\begin{gathered}
\frac{1}{T}\meas(S_2) \ll \l( \frac{k C_2 \log{\log{\log{T}}}}{V_{2}^{2}} \r)^{k}, \\
\frac{1}{T}\meas(S_3) \ll \l( \frac{k C_3}{V_{2}^{2}} \r)^{k}, \quad
\frac{1}{T}\meas(S_4) \ll \l( \frac{C_4 \log{T}}{V_2 \log{Y}} \r)^{k}
\end{gathered}
\end{align}
hold for $X \leq Y \leq T^{1/100}$, $k \leq \frac{1}{100}\frac{\log{T}}{\log{Y}}$.

We put $Y = T^{\log{\log{T}} / (200C_5V^2)}$ and $k = 2\l[\frac{V^2}{\log{\log{T}}} + 1\r]$, 
where $C_5$ is a constant chosen as satisfying $C_5 \geq 2$ and $C_5V^2/\log{\log{T}} \geq 2$.
Further, we decide $V_2$ as $200C_4 C_5 e^2 A V / (\log{\log{T}})^{1/3}$
Then we obtain
\begin{align*}
\frac{\meas(S_2) + \meas(S_3) + \meas(S_4)}{T}
\ll \exp\l( -\frac{2V^2}{\log{\log{T}}}\log\l( \frac{e A(\log{\log{T}})^{2/3}}{V} \r)\r)
\end{align*}
for $\sqrt{\log{\log{T}}} \ll V \leq A(\log{\log{T}})^{2/3}$.
Hence, by Lemma \ref{RPLD1} and inequality \eqref{TINES}, we have
\begin{align*}
\frac{1}{T}\meas(\S(T, V))
\leq (1 + o(1))\int_{\frac{V_1}{W(T)}}^{\infty}e^{-u^2/2}\frac{du}{\sqrt{2\pi}}
+ o\l( \int_{\frac{V}{\sqrt{1/2\log{\log{T}}}}}^{\infty}e^{-u^2/2}du \r)
\end{align*}
for $\sqrt{\log{\log{T}}} \ll V \leq A(\log{\log{T}})^{2/3}$.
Here, $W(T)$ indicates
\begin{align*}
W(T) = \sqrt{\frac{1}{2}\sum_{p \leq X}p^{-1}} 
= \sqrt{\frac{1}{2}\log{\log{T}}} + O\l( \frac{\log{\log{\log{T}}}}{\sqrt{\log{\log{T}}}} \r).
\end{align*}
Here, we find that
\begin{align*}
\int_{\frac{V_1}{W(T)}}^{\infty}e^{-u^2/2}\frac{du}{\sqrt{2\pi}}
= \int_{\frac{V}{\sqrt{1/2\log{\log{T}}}}}^{\infty}e^{-u^2/2}\frac{du}{\sqrt{2\pi}}
+ \int_{\frac{V_1}{W(T)}}^{\frac{V}{\sqrt{1/2\log{\log{T}}}}}e^{-u^2/2}\frac{du}{\sqrt{2\pi}}
\end{align*}
and that
\begin{align*}
\int_{\frac{V_1}{W(T)}}^{\frac{V}{\sqrt{1/2\log{\log{T}}}}}e^{-u^2/2}\frac{du}{\sqrt{2\pi}}
&\ll \l( \frac{V}{\sqrt{1/2\log{\log{T}}}} - \frac{V_1}{W(T)} \r)e^{-\frac{V_{1}^2}{2W(T)^2}}\\
&\ll \frac{V}{(\log{\log{T}})^{5/6}}e^{-\frac{V^2}{\log{\log{T}}}}.
\end{align*}
Thus, we have
\begin{multline*}
\frac{1}{T}\meas(\S(T, V))\\
\leq (1 + o(1))\int_{\frac{V}{\sqrt{1/2\log{\log{T}}}}}^{\infty}e^{-u^2/2}\frac{du}{\sqrt{2\pi}}
+ O\l( e^{-\frac{V^2}{\log{\log{T}}}}\frac{V}{(\log{\log{T}})^{5/6}} \r)
\end{multline*}
for $\sqrt{\log{\log{T}}} \ll V \leq A(\log{\log{T}})^{2/3}$. 
This completes the proof of Theorem \ref{LVEJ}.
\end{proof}

\section{\textbf{Proofs of Theorem \ref{QvM} and Theorem \ref{MLFmGM}}}

In this section, we prove Theorem \ref{QvM} and Theorem \ref{MLFmGM}.

\begin{proof}[Proof of Theorem \ref{QvM}]
Let $m$ be a positive integer and $f$ be a fixed function satisfying the condition of this paper and $D(f) \geq 2$.
Then, by Theorem \ref{Main_Prop}, for $t \geq 14$, $X \leq T^{\frac{1}{135k}} = : Y$, we obtain
\begin{multline}	\label{QvM1}
\Bigg|\eta_{m}(\s + it) - i^{m}\sum_{2 \leq n \leq X}\frac{\Lam(n)}{n^{\s + it}(\log{n})^{m+1}} - Y_{m}(\s+it)\Bigg|^{2k}\\
\leq 2^{2k}\Bigg|\sum_{X < n \leq Y^2}\frac{\Lam(n)v_{f, 1}(e^{\log{n}/\log{Y}})}{n^{\s + it}(\log{n})^{m+1}}\Bigg|^{2k}
+ 2^{2k}|R_{m}(\s + it, Y, 1)|^{2k}.
\end{multline}
By using partial summation, Lemma \ref{SLL}, and the prime number theorem, we find that
\begin{align*}
\int_{0}^{T}\Bigg| \sum_{X < p \leq Y^2}\frac{v_{f, 1}(e^{\log{p}/\log{Y}})}{p^{\s + it}(\log{p})^{m}} \Bigg|^{2k}dt
&\ll T k!\Bigg(\sum_{p > X}\frac{1}{p^{2\s}(\log{p})^{2m}}\Bigg)^{k}\\
&\leq T k! \l( \frac{2m+1}{2m} + \frac{C}{\log{X}} \r)^{k}\frac{X^{k(1-2\s)}}{(\log{X})^{2k m}},
\end{align*}
and that
\begin{align*}
\int_{0}^{T}\Bigg| \sum_{X < p^2 \leq Y^2}\frac{v_{f, 1}(e^{\log{p^2}/\log{Y}})}{p^{2\s + 2it}(\log{p^2})^{m}} \Bigg|^{2k}dt
&\ll T k!\Bigg(\sum_{p > \sqrt{X}}\frac{1}{p^{4\s}(\log{p^2})^{2m}}\Bigg)^{k}\\
&\leq T k! C^{k}\frac{X^{k(1-4\s)/2}}{(\log{X})^{2k m}}.
\end{align*}
Set 
\begin{gather*}
\psi_{3}(z, y) := \us{l \geq 3}{\sum_{y < p^{l} \leq z}}\log{p}.
\end{gather*}
Then we can easily obtain the inequality $\psi_3(z, y) \ll z^{1/3}$.
By using this inequality and partial summation, we find that
\begin{align*}
\Bigg|\us{l \geq 3}{\sum_{X < p^{l} \leq Y^2}}\frac{v_{f, 1}(e^{\log{p^{l}}/\log{Y}})}{lp^{l(\s + it)}(\log{p^{l}})^{m}}\Bigg|
\leq \int_{X}^{\infty}\frac{\s \log{\xi} + m}{\xi^{1 + \s}(\log{\xi})^{m+1}}\psi_{3}(\xi, X)d\xi
\ll \frac{X^{1/3 - \s}}{(\log{X})^{m}}.
\end{align*}
Therefore, we have
\begin{align*}
\int_{0}^{T}\Bigg|\us{l \geq 3}{\sum_{X < p^{l} \leq Y^2}}
\frac{v_{f, 1}(e^{\log{p^{l}}/\log{Y}})}{lp^{l(\s + it)}(\log{p^{l}})^{m}}\Bigg|^{2k}dt
\ll T C^{k}\frac{X^{k(2/3 - 2\s)}}{(\log{X})^{2 k m}}.
\end{align*}
Hence it holds that
\begin{multline}	\label{QvM2}
\int_{0}^{T}\Bigg| \sum_{X < n \leq Y^2}\frac{\Lam(n)v_{f, 1}(e^{\log{n}/\log{Y}})}{n^{1/2+it}(\log{n})^{m+1}} \Bigg|^{2k}dt\\
\ll T k! \l( \frac{2m+1}{2m} + \frac{C}{\log{X}} \r)^{k} \frac{X^{k(1 - 2\s)}}{(\log{X})^{2k m}}.
\end{multline}

Next, we consider the integral of $R_{m}(s, Y, 1)$. By Proposition \ref{RCSFP}, we have
\begin{align*}
&\int_{14}^{T}|R_{m}(\s+it, Y, 1)|^{2k}dt
\ll (C k^{2(m+1)})^{k} \frac{T^{1-\s} + T^{(1-\s)/2}}{(\log{T})^{2k(m+1)}}+ \\
&+\frac{(C k^{2m})^{k}Y^{(1 - 2\s)k}}{(\log{T})^{2km}}
\int_{14}^{T}\Bigg\{\l(\s_{Y, t} - \frac{1}{2}\r)Y^{2\s_{Y, t} - 1}
\bigg( \bigg| \sum_{n \leq Y^3}\frac{\Lam_{Y}(n)}{n^{\s_{Y, t} + it}} \bigg| + \log{t} \bigg)\Bigg\}^{2k}dt,
\end{align*}
where $\Lam_{Y}(n) = \Lam(n)w_{Y}(n)$, and $w_{Y}(n)$ is given by \eqref{def_w_X}.
By the same method as the proof of estimate \eqref{KINEs_X}, we can obtain
\begin{multline*}
\int_{0}^{T}\Bigg\{\l(\s_{Y, t} - \frac{1}{2}\r)Y^{2\s_{Y, t} - 1}
\bigg( \bigg| \sum_{n \leq Y^3}\frac{\Lam_{Y}(n)}{n^{\s_{Y, t} + it}} \bigg| + \log{(t+2)} \bigg)\Bigg\}^{2k}dt
\ll T (C k^{2})^{k}.
\end{multline*}
Hence, we have
\begin{align}	\label{QvM3}
\int_{14}^{T}|R_{m}(\s+it, Y, 1)|^{2k}dt
\ll T^{1 + \frac{1 - 2\s}{135}}\frac{C^{k} k^{2k(m+1)}}{(\log{T})^{2km}}.
\end{align}

Thus, from this estimate, \eqref{QvM1}, and \eqref{QvM2}, we obtain Theorem \ref{QvM}.
\end{proof}

\begin{proof}[Proof of Theorem \ref{MLFmGM}]
Let $m$ be a positive integer.
Let $X$, $T$ be sufficiently large numbers with $X \leq T^{\frac{1}{135k}}$.
Set $V$ be any positive number.
By Theorem \ref{QvM}, there exists a positive number $C_1 > 3$ such that
\begin{align}	\label{PJIN1}
\meas(\T_{m}(T, X, V))
\ll \sqrt{k}\l(\frac{4k(1+ \frac{1}{m} + \frac{C_{1}}{\log{X}})}{e V^2(\log{X})^{2m}} \r)^{k}
+ \l( \frac{C_{1} k^{2(m+1)}}{V^2 (\log{T})^{2m}} \r)^{k}.
\end{align}
Here, if $V$ satisfies $2(\log{X})^{-m} \leq V \leq c_{0}(\log{T})^{\frac{m}{2m+1}} (\log{X})^{-\frac{2m^2 + 2m}{2m+1}}$, 
then we choose $k = [V^2 (\log{X})^{2m}/4(1+1/m)]$, 
where $c_{0}$ is an absolute positive constant satisfying $c_{0} \leq e^{-1}C_{1}^{1/(4m+2)}$.
Then, by \eqref{PJIN1}, we have
\begin{align}	\label{JIN1}
&\meas(\T_{m}(T, X, V))
\ll \exp\l( - \frac{m}{4(m+1)}V^2 (\log{X})^{2m} \l(1 - \frac{C'}{\log{X}}\r) \r).
\end{align}
If $V$ satisfies $c_{0}(\log{T})^{\frac{m}{2m+1}} (\log{X})^{-\frac{2m^2 + 2m}{2m+1}} 
\leq V \leq \frac{\log{T}}{(\log{X})^{m+1}}$, 
then we choose $k = [(e C_1)^{-\frac{1}{m+1}} V^{\frac{1}{m+1}} (\log{T})^{\frac{m}{m+1}}]$.
Then, by \eqref{PJIN1}, we have
\begin{align}	\label{JIN2}
\meas(\T_{m}(T, X, V))
\ll \exp\l(-c_{1} V^{\frac{1}{m+1}} (\log{T})^{\frac{m}{m+1}} \r).
\end{align}
Thus, from estimates \eqref{JIN1} and \eqref{JIN2}, we obtain this theorem.

Next, we show \eqref{Rmk2} under the Riemann Hypothesis.
Let $f$ be a fixed function satisfying the condition of this paper and $D(f) \geq 2$. 
By Theorem \ref{Main_Prop} as $H = 1$, for $X \leq Z \leq T$, we have
\begin{multline}	\label{KDA}
\eta_{m}(\s + it) 
- i^{m}\sum_{2 \leq n \leq X}\frac{\Lam(n)}{n^{\s + it}(\log{n})^{m + 1}}\\
= i^{m}\sum_{X < n \leq Z^2}\frac{\Lam(n)v_{f, 1}(e^{\log{n} / \log{Z}})}{n^{\s + it}(\log{n})^{m + 1}}
+ R_{m}(\s + it, Z, 1).
\end{multline}
Since we assume the Riemann Hypothesis, 
by using Proposition \ref{RCSFP}, 
it holds that there exists some constant $C_3 > 1$ such that for any $3 \leq Z \leq T$, $t \in [T, 2T]$,
\begin{align*}	
|R_{m}(1/2 + it, Z, 1)|
\leq \frac{C_{3}}{2}\l( \frac{1}{(\log{Z})^{m+1}}
\l| \sum_{p \leq Z^3}\frac{w_{Z}(p)\log{p}}{p^{\frac{1}{2} + \frac{4}{\log{Z}} + it}} \r| 
+ \frac{\log{T}}{(\log{Z})^{m+1}} \r),
\end{align*}
where $w_{Z}$ is defined by \eqref{def_w_X}.
Therefore, by letting $Z = \exp\l( \l(C_3\frac{\log{T}}{V}\r)^{\frac{1}{m+1}} \r)$, we have
\begin{align*}
|R_{m}(1/2 + it, Z; u)|
\leq \frac{V}{2\log{T}}\l| \sum_{p \leq Z^2}\frac{w_{Z}(p)\log{p}}{p^{\frac{1}{2} + \frac{4}{\log{Z}} + it}} \r| + \frac{V}{2}
\end{align*}
for $t \in [T, 2T]$. Note that the inequality $V \leq \frac{\log{T}}{(\log{X})^{m+1}}$ implies $X \leq Z$.
Hence, by formula \eqref{KDA}, when $V \leq \frac{\log{T}}{(\log{X})^{m+1}}$, we have
\begin{align}	\label{BINJS}
\meas(\T_{m}(T, X, V))
\leq \meas(S_1) + \meas(S_2).
\end{align}
Here, the sets $S_1$ and $S_2$ are defined by
\begin{gather*}
S_1 := \l\{ t \in [T, 2T] \; \Bigg{|} \; 
\bigg| \sum_{X < n \leq Z^2}\frac{\Lam(n)v_{f, 1}(e^{\log{n} / \log{Z}})}{n^{1/2 + it}(\log{n})^{m+1}} \bigg| > \frac{V}{4} \r\},\\
S_2 := \l\{ t \in [T, 2T] \; \Bigg{|} \; 
\frac{V}{2\log{T}}\bigg| \sum_{p \leq Z^3}\frac{w_{Z}(p)\log{p}}{p^{\frac{1}{2} + \frac{4}{\log{Z}} + it}} \bigg| 
> \frac{V}{4} \r\}.
\end{gather*}

By the same calculation as \eqref{QvM2}, we obtain
\begin{align}	\label{IEMBT2}
\frac{1}{T}\int_{T}^{2T}
\Bigg| \sum_{X < n \leq Z^2}\frac{\Lam(n)v_{f, 1}(e^{\log{n} / \log{Z}})}{n^{1/2+it}(\log{n})^{m+1}} \Bigg|^{2k}dt
\ll \frac{C^{k} k!}{(\log{X})^{2m k}}.
\end{align}

On the other hand, by Lemma \ref{SLL} and the prime number theorem, we find that
\begin{align}	\label{CS2}
\frac{1}{T}\int_{T}^{2T}\l( \frac{V}{2\log{T}}
\l| \sum_{p \leq Z^3}\frac{w_{Z}(p) \log{p}}{p^{\frac{1}{2} + \frac{4}{\log{Z}}+it}} \r| \r)^{2k}dt
\ll C^k k!\l(\frac{V}{\log{T}}\r)^{\frac{2m}{m+1}k}
\end{align}
for $k \leq c_0 V^{\frac{1}{m+1}} (\log{T})^{\frac{m}{m+1}}$. Here $c_0$ is a small positive constant.
Therefore, by this estimate and \eqref{IEMBT2}, we obtain the following estimates
\begin{align}	\label{ESSS}
\begin{gathered}
\frac{\meas(S_1) + \meas(S_2)}{T}
\ll \l( \frac{C_{4} k^{1/2} }{V(\log{X})^{m}} \r)^{2k}+
 \l( \frac{C_{4} k^{1/2} }{V} \l( \frac{V}{\log{T}} \r)^{m/(m+1)} \r)^{2k},
\end{gathered}
\end{align}
where $C_4$ is a sufficiently large positive constant.
Hence, by these esitmates and \eqref{BINJS}, when $V \leq \frac{\log{T}}{(\log{X})^{m+1}}$, we have
\begin{align*}	
\meas(\T_{m}(T, X, V))
\ll \l( \frac{C_{4} k^{1/2}}{V(\log{X})^{m}} \r)^{2k}.
\end{align*}
Since $V$ satisfies $(\log{T})^{\frac{m}{2m+1}} (\log{X})^{-\frac{2m^2 + 2m}{2m+1}} 
\leq V \leq \frac{C_{0}\log{T}}{(\log{X})^{m+1}}$, choosing $k = [(e C_4)^{-2} V^{\frac{1}{m+1}} (\log{T})^{\frac{m}{m+1}}]$, we have
\begin{align*}	
\meas(\T_{m}(T, X, V))
\ll \exp\l(-c_4 V^{\frac{1}{m+1}} (\log{T})^{\frac{m}{m+1}} 
\log\l( e\frac{V^{\frac{2m+1}{2m+2}}(\log{X})^{m}}{(\log{T})^{\frac{m}{2m+2}}} \r)\r).
\end{align*}
Thus, we obtain estimate \eqref{Rmk2} under the Riemann Hypothesis.
\end{proof}

\begin{acknowledgment*}
The author would like to deeply thank Mr Kenta Endo for useful discussion, 
and some results in this paper are motivated in the discussion with him.
The author would like to thank Professors Kohji Matsumoto and Hidehiko Mishou for their helpful comments.
Finally, the author would like to thank Professor Scott Kirila and Mr Masahiro Mine for telling me the information on some papers.
This work is supported by Grant-in-Aid for JSPS Research Fellow (Grant Number: 19J11223).
\end{acknowledgment*}





\end{document}